\let\amslrcorner\lrcorner
\let\lrcorner\amslrcorner
\newtheorem{theorem}{Theorem}[section]
\newtheorem{corollary}{Corollary}[section]
\newtheorem{example}{Example}[section]
\newtheorem{notation}{Notation}[section]
\newtheorem*{warning}{Warning}
\newtheorem{proposition}{Proposition}[section]
\theoremstyle{definition}
\newtheorem{definition}{Definition}[section]
\newtheorem{remark}{Remark}[section]
\newcommand{\x}{\boldsymbol{x}}
\newcommand{\Z}{\mathbb{Z}}
\newcommand{\R}{\mathbb{R}}
\newcommand{\C}{\mathbb{C}}
\newcommand{\p}{\mathbb{P}}
\newcommand{\g}{\mathfrak g}
\newcommand{\gh}{\mathfrak h}
\newcommand{\X}{\mathfrak X}
\newcommand{\gl}{\mathfrak{gl}}
\newcommand{\sll}{\mathfrak{sl}}
\newcommand{\stab}{\mathfrak{stab}}
\newcommand{\Span}[1]{\left\langle#1\right\rangle}
\DeclareMathOperator{\LL}{LGr}
\DeclareMathOperator{\LG}{LG}
\DeclareMathOperator{\LFl}{LFl}
\DeclareMathOperator{\diag}{diag}
\DeclareMathOperator{\tr}{tr}
\DeclareMathOperator{\id}{id}
\DeclareMathOperator{\rank}{rank}
\DeclareMathOperator{\vol}{vol}
\DeclareMathOperator{\Graph}{graph}
\DeclareMathOperator{\SO}{\mathsf{SO}}
\DeclareMathOperator{\GL}{\mathsf{GL}}
\DeclareMathOperator{\SL}{\mathsf{SL}}
\DeclareMathOperator{\Sp}{\mathsf{Sp}}
\DeclareMathOperator{\CSp}{\mathsf{CSp}}
\DeclareMathOperator{\Gr}{Gr}
\DeclareMathOperator{\sign}{sign}
\DeclareMathOperator{\End}{End}
\DeclareMathOperator{\Hom}{Hom}
\DeclareMathOperator{\Stab}{Stab}
\DeclareMathOperator{\Sing}{Sing}
\DeclareMathOperator{\ind}{ind}
\newcommand{\OO}{\mathcal{O}}
\newcommand{\E}{\mathcal{E}}
\newcommand{\V}{\mathcal{V}}
\newcommand{\CC}{\mathcal{C}}
\newcommand{\NN}{\mathcal{N}}
\newcommand{\f}{\theta_1}
\newcommand{\ff}{\theta_2}
\newcommand{\fk}{\theta_k}
\newcommand{\Th}{^\textrm{th}}
\newcommand{\Nd}{^\textrm{nd}}
\newcommand{\ins}{\lrcorner}
\let\LG\LL
\def\sL{\mathcal{L}}
\def\sp{\mathfrak{sp}}
\let\rk\rank
\begin{document}

\title{The moment map on the space of symplectic 3D Monge--Amp\`ere equations}

\author{Jan Gutt}
\address{Center for Theoretical Physics of the Polish Academy of Sciences,
Al. Lotnik\'ow 32/46,
02-668 Warsaw,
Poland}
\email{jan.gutt@gmail.com}
\author{Gianni Manno}
\address{Dipartimento di Matematica ``G. L. Lagrange'', Politecnico di Torino, Corso Duca degli Abruzzi, 24, 10129 Torino, Italy.}
\email{giovanni.manno@polito.it}
\author{Giovanni Moreno}
\address{Department of Mathematical Methods in Physics,
Faculty of Physics, University of Warsaw,
ul. Pasteura 5, 02-093 Warsaw, Poland}
\email{giovanni.moreno@fuw.edu.pl}
\author{Robert Śmiech}
\address{Faculty of Mathematics, Informatics, and Mechanics, University of Warsaw,
Stefana Banacha 2, 02-097 Warsaw, Poland}
\email{r.smiech@mimuw.edu.pl}
\date{\today}

\maketitle

\begin{abstract}

For any $2\Nd$ order scalar PDE $\mathcal{E}$ in one unknown function, that we interpret as a hypersurface of a $2\Nd$ order jet space $J^2$, we construct,  by means of the characteristics of $\mathcal{E}$, a sub--bundle of the contact distribution of the underlying contact manifold $J^1$, consisting of conic  varieties. We call it the \emph{contact cone structure} associated with  $\mathcal{E}$. 
We then focus on symplectic Monge--Amp\`ere equations in 3 independent variables,  that are naturally parametrized by a 13--dimensional real projective space. If we pass to the field of complex numbers $\C$, this projective space turns out to be the projectivization of the 14--dimensional irreducible representation of the simple Lie group $\Sp(6,\C)$: the associated moment map allows to define a rational map $\varpi$ from the space of symplectic 3D Monge-Amp\`ere equations to the projectivization of the space of quadratic forms on a $6$--dimensional symplectic vector space. We study in details the relationship between the zero locus of the image of $\varpi$, herewith called the \emph{cocharacteristic variety}, and the contact cone structure of a 3D Monge-Amp\`ere equation $\E$: under the hypothesis of non--degenerate symbol, we prove that these two constructions coincide. A key tool in achieving such a result will be a complete list of mutually non--equivalent quadratic forms on a $6$--dimensional symplectic space, which has an interest on its own.

\end{abstract}
 
\noindent\textbf{MSC 2020:} 35A30; 58A20; 58J70


%
\setcounter{tocdepth}{1}

\tableofcontents

\section{Introduction}\label{sec:intro}

\subsection{Starting point: $2\Nd$ order PDEs and their symbol}\label{sec:starting}

A \emph{(scalar)
$2\Nd$ order PDE in one unknown function $u=u(x^1,\ldots,x^n)$ and $n$ independent variables $\x=(x^1,\ldots,x^n)$}, henceforth called $n$--dimensional PDEs ($n$D PDEs), can be written as
\begin{equation}\label{eq:equation}
\E:=\big\{\,F(\x,u,\nabla u,\nabla^2 u)=F(x^i,u,u_i,u_{ij})=0\big\}\, ,
\end{equation}
where $F$ is a real function on a domain of
\begin{equation}\label{eq:set.Omega}
\R^n\times\R\times\R^n\times \R^{\frac{n(n+1)}{2}} 
\end{equation}
and
\begin{equation}\label{eqDefDer}
    u_i:=\frac{\partial u}{\partial x^i}\,, \quad u_{ij}:=\frac{\partial^2 u}{\partial x^ix^j}\,.
\end{equation}
Equation \eqref{eq:equation} is \emph{elliptic} at a point 
$$
\ff=\big(\x_0,u(\x_0),\nabla u(\x_0),\nabla^2u(\x_0)\big)=(x^i_0,u_0,u^0_i,u^0_{ij})
$$ 
of the space \eqref{eq:set.Omega} lying in the subset $\E$ given by   \eqref{eq:equation}, if the matrix
\begin{equation}\label{eq:matrix.vertical.diff}
\frac{1}{2-\delta_{ij}}\left.\frac{\partial F}{\partial u_{ij}}\right|_{\ff}
\end{equation}
is definite (either positive or negative), i.e.,  if the quadratic form
\begin{equation}\label{eq:symbol.intro}
\sum_{i\leq j}\left.\frac{\partial F}{\partial u_{ij}}\right|_{\ff}\eta_i\eta_j\, ,
\end{equation}
that we call also the \emph{symbol} of equation \eqref{eq:equation} at the point $\ff\in\E$,
is either greater or less than zero for all vectors $(\eta_1,\dots,\eta_n) \in\R^n\smallsetminus\{0\}$.\par

A point $\ff\in\mathcal{E}$ is called \emph{regular} if the matrix \eqref{eq:matrix.vertical.diff} is not zero. 
In Sections \ref{sec:intro}--\ref{secBackRecepi}, points of $\mathcal{E}$ are always assumed to be regular, unless otherwise specified.\par

\subsection{Context}

It is well known that the notion of symbol of a $2\Nd$ order PDE $\mathcal{E}$, more precisely its rank, is closely related to the notion of characteristic of $\mathcal{E}$. 
There are PDEs that are completely characterized by the behavior of their characteristics: for instance, $2$D hyperbolic (resp., parabolic) Monge--Amp\`ere equations are those $2\Nd$ order PDEs whose characteristic lines are arranged in a couple of different (resp. coincident) $2$-dimensional linear subspaces (see, for instance, \cite{MR2985508,MR2503974} and reference therein). One can ask if a similar phenomenon occurs also in the multidimensional situation.
The oldest paper regarding a multidimensional generalization of Monge--Amp\`ere equations, to the authors' best knowledge, dates back to a 1899 work   by Goursat \cite{MR1504329}, where the  Monge--Amp\`ere equations with an arbitrary number of independent variables were introduced as those PDEs whose characteristic cones ``degenerate'' into linear subspaces. This phenomenon corresponds to the degeneration of the the symbol of the Monge--Amp\`ere equations. The equations obtained by Goursat are indeed of type
\begin{equation*}
\det\|u_{ij}-b_{ij}\|=0\,, \quad b_{ij}=b_{ij}(x^{1},\dots,x^{n},u,u_{1},\dots,u_{n})\, ,
\end{equation*}
and a straightforward computation shows that the rank of their symbols is less or equal to $2$: as such, these PDEs are a proper subclass of a larger class of Monge--Amp\`ere equations that were introduced later by Boillat in \cite{MR1139843}, as the only PDEs whose characteristic velocities behave in the ``completely exceptional" way 
in the sense of   P. Lax. \cite{MR0481580,BoillatDafermosLaxEtAl1996,MR3603758,MR0068093,MR0067317,Moreno2017}.




V.~Lychagin, who proposed studying $2\Nd$ order PDEs by means of the underlying contact geometry of the $(2n+1)$--dimensional first--order jet space $J^1$ (i.e., the space locally parametrized by $(x^i,u,u_i)$,   defined in \cite{MR525652} $n$--dimensional Monge--Amp\`ere equations in terms of certain differential $n$--forms on the $2n$--dimensional contact distribution $\CC$ of $J^1$, which he called \emph{effective $n$--forms} and whose set we will be denoting by $\Lambda^n_0(\CC^*)$; see also \cite{MR2352610}. Lychagin's approach leads directly to the general expression of an $n$--dimensional Monge--Amp\`ere equation:
\begin{equation*}
    M_n+M_{n-1}+\dots+M_0=0\, ,
\end{equation*}
where $M_k$ is a linear combination  of all $k\times k$ minors of the Hessian matrix $\|u_{ij}\|$, with coefficients in $C^\infty(J^1)$. Moreover, the equivalence problem for Monge--Amp\`ere equations, thanks to the aforementioned correspondence, can be recast in terms of effective forms, which is especially advantageous in the case of \emph{symplectic} Monge--Amp\`ere equations, i.e., Monge--Amp\`ere equations of type\footnote{Such equation are known also as ``Hirota type'', see, e.g.,  \cite{DoubrovFerapontov,Russo2019,MR2805306}.}
\begin{equation}\label{eq:sympl.hirota}
    F(u_{ij})=0\, ,
\end{equation}
with $F$ not depending neither on $x^i$, $u$, nor $u_i$. 
For instance, in the case $n=3$, the authors of \cite{MR2352610} associated with any effective $3$--form $\Phi\in\Lambda^3_0(\CC^*)$  the following symplectically invariant quadratic form on $\CC$:
\begin{equation}\label{eq:mappa.KLR}
 \mathrm{trace}(\omega^{-1}\circ \Phi)^2\,,
\end{equation}
where $\omega$ is a prescribed representative of the conformal symplectic structure of $\CC$, and employed it to obtain the normal forms of symplectic 3D Monge--Amp\`ere equations with non--degenerate symbol. A peculiar feature of the 3--dimensional case is that the 20--dimensional vector space $\Lambda^3(\CC_{\f}^*)$, where $\f\in J^1$, is equipped with a natural symplectic structure, which makes the natural $\Sp(\CC_{\f})$--action a Hamiltonian one;    the corresponding moment map, firstly studied by  N.~Hitchin  in \cite{Hitchin3Forms}, descends to a quadratic map between  the 14--dimensional vector space $\Lambda_0^3(\CC_{\f}^*)$ and the Lie algebra $\sp(\CC_{\f})$, which in turns identifies naturally with $S^2(\CC_{\f}^*)$, the space of quadratic forms on $\CC_{\f}$. It has been proved in  \cite{Banos2002}  that Hitchin's restricted moment map and the quadratic form \eqref{eq:mappa.KLR} lead to the same quadric in $\CC_{\f}$, see also \cite[Prop. 8.1.5]{MR2352610}.

Yet another possibility of seeing symplectic $n$--dimensional  Monge--Amp\`ere equations is as \emph{hyperplane sections} of the Lagrangian Grassmannian $\LL(n,2n)$: such is the perspective  adopted, among others, by E.~Ferapontov and his collaborators, who were mainly interested in the hydrodynamic integrability property  of  PDEs of the form  \eqref{eq:sympl.hirota}, see  \cite{MR2587572,Ferapontov2020}.  One of the results of \cite{MR2587572} is the existence of a ``master equation'' in the class of symplectic hydrodynamically integrable $2\Nd$ order PDEs in 3 independent variables: in terms of group actions, this means that the 21--dimensional group $\Sp(6,\R)$, that  acts naturally on the Lagrangian Grassmannian $\LG(3,6)$, has an open orbit in the space that parametrises  such PDEs. Another result is that the intersection of the latter  space with that parametrizing (symplectic 3D) Monge--Amp\`ere equations turns out to be the class of $\Sp(6,\R)$--linearizable Monge--Amp\`ere equations.


\subsection{Structure of the paper and description of the main results}

In Section \ref{secPRELIM}, after refreshing the basics concerning the characteristics and the symbol of a scalar $2\Nd$ order PDE in $n$ independent variables, we introduce the $k$--order jet spaces $J^k$ of (smooth) functions in $n$ independent variables  and then we interpret $2\Nd$ order PDEs as  hypersurfaces of $J^2$; the fibers of the latter are open dense subsets of the Lagrangian Grassmannian $\LL(n,2n)$. Next, we define a general (symplectic) Monge--Amp\`ere equation as a hyperplane section of a Lagrangian Grassmannian and, in the particular case  $n=3$, we define the \emph{cocharacteristic variety} of a (3D) Monge--Amp\`ere equation as the zero locus of the aforementioned map \eqref{eq:mappa.KLR}. We also introduce the notion of a \emph{contact cone structure on $J^1$}, that is  an assignment  $\f\in J^1\to\V_{\f}\subset\CC_{\f}$ of affine conic varieties, where $\CC$ is the contact distribution on $J^1$; \emph{cone structures} have been long known, both in the real--differentiable and in the complex--analytic contexts, and have recently risen to a certain attention as they cast a bridge between the two categories, e.g., in the works of J.--M.~Hwang  \cite{hwang_Mori_meets_Cartan}: a \emph{contact} cone structure can be then regarded as a cone structure that is compatible with a preexisting contact structure.\par 

In Section \ref{secForwardRecepi} we show that the notion of symbol, of a characteristic line, and  of a characteristic hyperplane, when properly framed in the theory of $2\Nd$ order PDEs based on  the contact manifold $J^1$, can be naturally combined to construct a contact cone structure on $J^1$, which  we call the \emph{contact cone structure associated with the considered PDE}. This supplies a common footing to both Goursat's idea of defining a Monge--Amp\`ere equation via a linear sub--distribution of the contact distribution, and the KLR invariant \eqref{eq:mappa.KLR}. \par

In Section \ref{secSezioneNuova} we narrow our attention to the class of (symplectic) 3D Monge--Amp\`ere equations and we compute both the contact cone structure and the cocharacteristic variety of four particular equations that represents almost all possible $\Sp(6,\R)$--equivalence classes of such PDEs---or all of them, without   ``almost'', once we will have passed to the field of complex numbers.  These computations will be used to prove that, for a Monge--Amp\`ere equation with non--degenerate symbol, the notions of contact cone structure and the notion of cocharacteristic variety are the same (in particular, they are quadric hypersurfaces of the contact plane of $J^1$), whereas for Monge--Amp\`ere equations with degenerate symbol, the cocharacteristic variety and the contact cone structure are quite different.
Such results are contained in Theorem \ref{thMain1}, which is reformulated in  Section \ref{secLinSec} over the field of complex numbers: Corollary \ref{corEquivalencyMomentCoChar} represents indeed a coarse proof of Theorem \ref{thMain1}.\par

In Section \ref{secBackRecepi} we give an answer to the natural question whether it is possible to revert the above procedure, i.e., to construct a 3D $2\Nd$ order PDE starting from a contact cone structure in dimension seven and to what extent the correspondence between a PDE and its contact cone structure is one--to--one. This leads to the study of the integral manifolds of certain vector distributions defined on the fiber of the $2\Nd$ order jet space $J^2$. As an example of computations we  consider the contact cone structures associated with the four particular Monge--Amp\`ere equations studied in Section \ref{secSezioneNuova} above.\par


The results of Sections \ref{secSezioneNuova} (in particular Theorem \ref{thMain1}) and Section  \ref{secBackRecepi} show that the contact cone structures of 3D Monge--Amp\`ere equations form a narrow sub--class of the class of all quadrics in the contact distribution $\CC$: even a dimensional inspection shows that the space of 3D symplectic Monge--Amp\`ere equations is 13--dimensional, whereas the space of all quadrics in $\CC_{\f}$ is 20--dimensional.
We also stress that, to obtain a PDE from a contact cone structure, as showed in Section \ref{secBackRecepi}, one generally has to pass through an integration procedure: such a procedure would be very difficult to carry out successfully without a complete list of normal  forms of quadratic forms on $\CC$ with respect to the action of the symplectic group $\Sp(\CC_{\f})=\Sp(6,\R)$.
This motivates looking for the normal forms of the quadratic forms on a $6$--dimensional symplectic space, up to symplectic equivalence.
This classification problem is classical, since, in view of the identification of $\sp(\CC_{\f})$ with $S^2(\CC^*_{\f})$, it coincides with the classification problem of adjoint orbits in a semisimple Lie algebra, about which there is a lot of excellent literature (besides the classical book \cite{Collingwood2017} we mention \cite{BURGOYNE1977339, DeBacker2004, Humphreys} and related works, such as \cite{Winternitz83,Nevins2011}), even though an explicit list of normal forms seems yet to be missing.

After fixing some notation and introducing some necessary tools in Section \ref{secMAESpace}, 
in Section  \ref{SecNormFormQadr} we indeed give, over the field of complex numbers, a complete list of mutually non--equivalent (up to symplectic transformations) quadratic forms on a $6$--dimensional symplectic space.

In Section \ref{secMomentMap} the Hitchin's moment map is reviewed 
and its equivalence with the symplectically invariant form \eqref{eq:mappa.KLR} is proved. Next equivalence, that is, the one with the cocharacteristic variety, is proved in Section \ref{secLinSec}, together with a review of the four geometries of the hyperplane sections of the Lagrangian Grassmanian $\LG(3,6)$, which is, in part, already contained in \cite[Proposition 2.5.1]{Iliev2005}. \par

\subsection*{Notation and conventions}
The Einstein convention for repeated indices will be used throughout the text, unless otherwise specified.
The linear span of vectors $v_1,\dots,v_k$ is denoted by $\Span{v_1,\dots,v_k}$. The cofactor matrix of a matrix $A$ is denoted by $A^\sharp$. The linear dual of a linear space (real or complex) $V$ is denoted by $V^*$, whereas $X^\vee$ stands for the \emph{projective dual} of the projective variety $X$. Symbol $X_{\textrm{sm}}$ stands for the subset of smooth points of a variety $X$. 
By a vector distribution $\mathcal{X}$ on a manifold $M$ we mean a smooth assignment $p\in M \to \mathcal{X}_p\subseteq T_pM$ (not necessarily of constant rank). We say that $\mathcal{X}$ is \emph{integrable} if it is such in the Frobenius sense, i.e., if  $[Y,Z]\in\mathcal{X}$ for any vector fields $Y,Z\in\mathcal{X}$, where $[Y,Z]$ is the Lie bracket of $Y$ and $Z$.

%
\subsection*{Acknowledgements} 
G.~Moreno is supported by the National Science Center, Poland, project ``Complex contact manifolds and geometry of secants'', 2017/26/E/ST1/00231.
R.~Śmiech is supported by the ``Kartezjusz'' program.
G. Manno gratefully acknowledges support by the project ``Connessioni proiettive, equazioni di Monge-Amp\`ere e sistemi integrabili'' (INdAM), ``MIUR grant Dipartimenti di Eccellenza 2018-2022 (E11G18000350001)'' and PRIN project 2017 ``Real and Complex Manifolds: Topology, Geometry and holomorphic dynamics'' (code 2017JZ2SW5). G. Manno is a member of GNSAGA of INdAM.

\noindent
The authors thank J.~Buczyński and A.~Weber for keen and useful remarks during the preparation of the manuscript.

\section{Preliminaries}\label{secPRELIM}

\subsection{Cauchy data and characteristics of $2\Nd$ order PDEs}


In the present section, as well as in the Sections \ref{secForwardRecepi}, \ref{secSezioneNuova} and \ref{secBackRecepi}, we mainly deal with PDEs \eqref{eq:equation} of \emph{non--elliptic type}, i.e., equations that are non--elliptic in the subset $\E$ of \eqref{eq:set.Omega}; indeed, non--elliptic PDEs (and, in particular, the hyperbolic ones) are abundant in \emph{real} characteristics: this is a choice of pure convenience, just to visually explain, via tangible examples, how to use the characteristics to construct our main object, that is the  quadric cone structure associated with a PDE. Starting from Section \ref{secMAESpace}  we switch to the field of \emph{complex} numbers and the very notion of ellipticity becomes meaningless.    

Let us consider the space consisting  of the first three factors appearing in \eqref{eq:set.Omega}, that is, the space with coordinates $(x^i,u,u_i)$: 
a \emph{Cauchy datum} is a particular $(n-1)$--dimensional submanifold of such a  space; one way to give it explicitly is via a    parametrization:
\begin{equation}\label{eq:Cauchy.datum}
\Phi(\mathbf{t})=\big(x^{1}(\mathbf{t}),\dots,x^n(\mathbf{t}),u(\mathbf{t}),u_{1}(\mathbf{t}),\dots,u_n(\mathbf{t})\big)\,,
\,\,\, \mathbf{t}=(t_1,\dots,t_{n-1})\in\mathbb{R}^{n-1}\, ,
\end{equation}
and this is the point of view we will adopt in this paper. By a ``particular submanifold" we meant that the functions $u_i(\mathbf{t})$ cannot be arbitrarily chosen: one should take into account that each $u_i$ must be the derivative of $u$ with respect to  $x^i$, cf. \eqref{eqDefDer}; this heuristic requirement will be made formal in Section \ref{secForwardRecepi}: as we shall see, the general condition for \eqref{eq:Cauchy.datum} to be a Cauchy datum can be given geometrically in terms of submanifolds of the (natural) contact structure the $(2n+1)$--dimensional $(x^i,u,u_i)$--space is equipped with.\par

Given a Cauchy datum \eqref{eq:Cauchy.datum}, we can formulate a  \emph{Cauchy problem}: this entails   finding a solution  $u=f(x^1,\dots,x^n)$ to \eqref{eq:equation} under the additional requirement that it satisfies the conditions
\begin{equation}\label{eq:Cauchy.problem}
f\big(x^1(\mathbf{t}),\dots,x^n(\mathbf{t})\big)=u(\mathbf{t})\,,\,\,\,
\frac{\partial f}{\partial x^{i}}
\big(x^1(\mathbf{t}),\dots,x^n(\mathbf{t})\big)=u_i(\mathbf{t})\,, \quad\forall \mathbf{t}\in\R^{n-1}\, .
\end{equation}
If the Cauchy datum \eqref{eq:Cauchy.datum} is \emph{non--characteristic} for the PDE \eqref{eq:equation},
then all the derivatives of arbitrary order of $u=f(x^1,\dots,x^n)$ are determined
by conditions \eqref{eq:Cauchy.problem} and \eqref{eq:equation}. This means that the full  Taylor expansion of $f$ is well determined, i.e., there exists a unique \emph{formal} solution; otherwise, the Cauchy datum \eqref{eq:Cauchy.datum} is called \emph{characteristic}. Of course the above definitions and reasonings  can be localized in a neighborhood of a considered point. The classical literature about geometry of PDEs and their characteristics  comprises, among others, \cite{MR1670044,Petrovsky1992}; see also the recent reviews   \cite{MR3760967,Vitagliano2014}.

\begin{example}\label{ex:introduction}
Consider the wave equation in one spacial  dimension:
\begin{equation}\label{eq:wave.2}
u_{12}=0\,.
\end{equation}
The Cauchy datum
\begin{equation}\label{eq:ex.char.wave.2}
u(x^1,0)=x^1\,,\,\,
u_1(x^1,0)=1\,,\,\,
u_2(x^1,0)=0\, ,\quad x^1\in\R\, ,
\end{equation}
can then be seen as a (parametric) curve $\Phi(x^1)$ in the $(x^1,x^2,u,u_1,u_2)$--space:
\begin{equation}\label{eq:CurvPhi.2}
\Phi(x^1)=\big(x^1,0,x^1,1,0\big)\, .
\end{equation}
It is characteristic for \eqref{eq:wave.2}, since we cannot determine all the derivatives of arbitrary order of a solution along the curve \eqref{eq:ex.char.wave.2}; indeed, the function $f(x^1,x^2)=x^1+k (x^2)^2$
is a solution to \eqref{eq:wave.2}, whose first jet (see below) along the curve $x^1\mapsto (x^1,0)$ coincides with \eqref{eq:CurvPhi.2} for each $k$. On the contrary, the Cauchy datum
\begin{equation*}
u(x^2,x^2)=x^2+(x^2)^2\,,\,\,u_1(x^2,x^2)=1\,,\,\,
u_2(x^2,x^2)=2x^2\, ,
\end{equation*}
whose corresponding parametric curve is
\begin{equation}\label{eq:CurvPsi.2}
\Psi(x^2)=\big(x^2,x^2,(x^2)^2+x^2,1,2x^2\big)\, ,
\end{equation}
is not characteristic for \eqref{eq:wave.2}: indeed, the function  $f(x^1,x^2)=x^1+(x^2)^2$ is the only solution to \eqref{eq:wave.2}, whose first jet   along the curve $x^2\mapsto (x^2,x^2)$ coincides with \eqref{eq:CurvPsi.2}.
\end{example}

\subsection{Spaces of $k$--jets  of functions in $n$ variables}\label{sec:k.jets}

Let $f:\Omega\subseteq\R^n\longrightarrow\mathbb{R}$ be a smooth function on a open domain $\Omega\subseteq \R^n$.
The \emph{$k$--jet }of $f=f(
\x)=f(x^1,\dots,x^n)$ at a point $\x_0=(x^1_0,\dots,x^n_0)\in\Omega$ is defined as the Taylor expansion of $f$ at $\x_0$ up to order $k$:
$$
j^k_{\x_0}f:=\sum_{h=0}^k\sum_{i_1\dots i_h}  \frac{1}{h!}\frac{\partial^h f}{\partial x^{i_1}\cdots\partial x^{i_h}}(\x_0) \, (x^{i_1}-x^{i_1}_0) \cdots (x^{i_h}-x^{i_h}_0) \,.
$$
The totality of such polynomials is denoted by
$$
J^k:=\{ j^k_{\x_0}f\mid f:\Omega \to\mathbb{R}\,,\,\, \x_0\in \Omega\}
$$
and it is called the \emph{space of $k$--jets of functions} on  $\R^n$. Note that $J^0=\Omega\times\R$, whereas \eqref{eq:set.Omega} is nothing but $J^2$ with $\Omega=\R^n$.
Since $j^k_{\x_0}f$ is unambiguously  defined by the coefficients of the above polynomial, we can unambiguously write
\begin{equation*}
j^k_{\x_0}f=\left( \x_0\,,\,f(\x_0)\,,\,\nabla f(\x_0)\,,\dots\,,\, \nabla^k f(\x_0) \right) = \left(\x_0\,,\,f(\x_0)\,,\,\frac{\partial f}{\partial x^i}(\x_0)\,,\dots\,, \frac{\partial^k f}{\partial x^{i_1}\cdots\partial x^{i_k}}(\x_0)\right)\, ,
\end{equation*}
where $1\leq i_1\leq i_2\leq\dots\leq i_k\leq n$, i.e.,
one can regard $j^k_{\x_0}f$ as the equivalence class $[f]^k_{\x_0}$ of functions having the same derivatives of $f$ at $\x_0$, up to order $k$.
The space $J^k$ admits a coordinate system
\begin{equation}\label{eq:jet.coordinates}
(x^i,u,u_i, u_{ij}, \ldots, u_{i_1 \cdots i_k} )\, ,\quad 1\leq i_1 \leq i_2\leq\dots\leq i_k\leq n\,,
\end{equation}
which can be thought of as an extension of a coordinate system
 $(x^1,\dots,x^n,u)$ on $\Omega\times\R$ in the following sense: each coordinate function\footnote{The $u_{i_1 \cdots i_k}$'s are symmetric in the lower indices.} $u_{i_1 \cdots i_h}$ on $J^k$, with $h\leq k$,  is unambiguously defined by
\begin{equation*}
u_{i_1 \cdots i_h}(j^k_{\x_0}f)= \frac{\partial^h f}{\partial x^{i_1}\cdots\partial x^{i_h}}(\x_0)\,.
\end{equation*}
To keep the notation light, from now on, a particular point $j^k_{\x_0}f\in J^k$ will be denoted by the symbol
$$
\fk=\left(x^i_0,u_0,u^0_i,u^0_{ij},\dots,u^0_{i_1\cdots i_k} \right)\, ,\quad 1\leq i_1 \leq i_2\leq\dots\leq i_k\leq n\,.
$$
The natural projections $\pi_{k,m}:J^k\to J^m$, $\fk\mapsto \theta_m$, $k>m$, define a tower of bundles
\begin{equation*}
\dots\longrightarrow\ J^k\longrightarrow J^{k-1}\longrightarrow\dots\longrightarrow J^1\longrightarrow J^0=\Omega\times \R\,.
\end{equation*}
We denote the fiber of $\pi_{k,k-1}$ over the point $\theta_{k-1}\in J^{k-1}$ by
\begin{equation}\label{eq:fiber}
J^k_{\theta_{k-1}}:=\pi^{-1}_{k,k-1}(\theta_{k-1})\, .
\end{equation}
The \emph{(truncated to order $k$) total derivative} operators are defined as follows:
\begin{equation}\label{eq:total.derivative}
D^{(k)}_i:=\partial_{x^i}+u_i\partial_u+u_{ij_1}\partial_{u_{j_1}} + \dots + \sum_{j_1\leq\cdots\leq j_{k-1}} u_{i j_1\cdots j_{k-1}}\partial_{u_{j_1\cdots j_{k-1}}}\,.
\end{equation}
\subsection{$2\Nd$ order PDEs via hypersurfaces in a Lagrangian Grassmanian}\label{sub2ordPDEhypSurf}

From now on, we will be considering only $2\Nd$ order PDEs, i.e., we set $k=2$ in the framework given in Section \ref{sec:k.jets}: the general machinery of jet spaces 
briefly sketched above gives way to the more specific, and yet equivalent, formalism based on  contact manifolds, see, e.g.,  \cite{MR2352610}. In view of such a choice, there will appear some terminology and gadgets typical of Exterior Differential Systems, such as \emph{integral elements} or \emph{Pfaffian systems}, see \cite{MR1083148, McKay2019}.\par

The \emph{integral element} associated with $\ff\in J^2$,  denoted by $L_{\ff}$, is, by definition, the $n$--dimensional vector subspace of $T_{\f}J^1$ spanned by the operators \eqref{eq:total.derivative} (with $k=2$) evaluated at
\begin{equation}\label{eq:theta.2.bello}
\ff=\big(x^i_0,u_0,u_i^0,u_{ij}^0\big)=\big(\f,u_{ij}^0\big)\,.
\end{equation}
More precisely, 
\begin{equation}\label{eq:TangSpaToJKMU}
L_{\ff}:=\Span{\left.D_i^{(2)}\right|_{\ff}}_{i=1,2,\ldots, n}=\Span{ \partial_{x^i}\big|_{\f}+u_i^0\partial_u\big|_{\f}+\sum_{i\leq j}u_{ij}^0\partial_{u_j}\big|_{\f} }_{i=1,2,\ldots, n}\,.
\end{equation}
The key remark of this section is that the $(2n+1)$--dimensional jet space $J^1$ is endowed with a natural \emph{contact structure}, i.e., a  (completely non--integrable) vector distribution of rank $2n$, which we denote by $\CC$ and   can locally be described as the kernel of the \emph{contact form}
\begin{equation*}
\theta = du -  u_i dx^i.
\end{equation*}
It turns out that
\begin{equation}\label{eq:piano.contatto}
    \CC =\Span{D^{(1)}_i\,, \partial_{u_i}}_{i=1,2,\ldots,n}\, .
\end{equation}
Note that $(d\theta)_{\f}=(dx^i)_{\f}\wedge (du_i)_{\f}$ is a symplectic form on $\CC_{\f}$ for any $\f\in J^1$. A (local) diffeomorphism  $g:J^1\longrightarrow J^1$ preserving the contact distribution is called a \emph{contactmorphism} and induces a \emph{symplectomorphism} $g_\ast:\CC_{\f}\longrightarrow\CC_{g(\f)}$ between the corresponding contact spaces. The contactomorphisms that leave the point $\f$ invariant constitute a group isomorphic to the conformal symplectic group $\CSp(2n)$. Any contactomorphism can be prolonged to $J^2$ by taking the corresponding tangent map.
\begin{example}\label{ex:Legendre}
Let $m\leq n$. The (partial) Legendre transformation
\begin{equation}\label{eq:legendre.partial}
(x^i,u,u_i)\to \left(u_1,\dots,u_m,x^{m+1},\dots,x^n,u-\sum_{i=1}^mx^iu_i,-x^1,\dots,-x^m,u_{m+1},\dots,u_n\right) 
=\left(\tilde{x}^i,\tilde{u},\tilde{u}_i\right)
\end{equation}
is a contactomorphism. If $m=n$, then we have a \emph{total} Legendre transformation.
\end{example}
A \emph{Lagrangian} subspace is an $n$--dimensional and isotropic (with respect to the symplectic form $d\theta$) subspace of the symplectic space $\CC_{\f}$. The set
\begin{equation}\label{eq:Lagr.Grass}
\LL(n,\CC_{\f}):=\{\text{Lagrangian subspaces of } \CC_{\f}\}
\end{equation}
of all such subspaces is the \emph{Lagrangian Grassmannian variety} of $\CC_{\f}$; since all the Lagrangian Grassmannian varieties of $2n$--dimensional symplectic spaces are isomorphic, one can use the collective symbol $\LL(n,2n)$, when there is no need to stress the base point $\f$. It is worth stressing that in $\LL(n,\CC_{\f})$ there are, in particular, the integral elements $L_{\ff}$ corresponding to the points $\ff$ of the fiber $J^2_{\f}$, cf. \eqref{eq:fiber}: these integral elements do not, however, fill out the whole of \eqref{eq:Lagr.Grass}, but just an open subset of it.\par

A $2\Nd$ order PDE $\E$ can be then seen as a hypersurface of $J^2$ 
whose local expression, in a system of coordinates \eqref{eq:jet.coordinates}, is  \eqref{eq:equation}: put differently, a $2\Nd$ order PDE $\E$ is a sub--bundle of $J^2$ over $\pi_{2,1}(\E)$, where the latter subset of $J^1$ can be always assumed, save for a few exceptional cases, to be coinciding with $J^1$ itself. 
It is called \emph{symplectic} (or \emph{dispersionless Hirota type}, see \cite{Ferapontov2020}) if the function $F$ of \eqref{eq:equation} does not depend neither on coordinates $x^i$, nor on  $u$ and its derivatives $u_i$. In other words, a symplectic PDE $\E$ has the structure of a product of the $(x^i,u,u_i)$--space, by a fiber 
\begin{equation}\label{eq:fibre.PDE}
\E_{\f}:=\E\cap J^2_{\f}\,.
\end{equation} 
Accordingly,  the equivalence problem for symplectic PDEs becomes a problem of (conformal) symplectic equivalence.
In fact, the study of symplectic PDEs up to contactomorphisms coincides with the study of codimension--one subsets of the fiber $J^2_{\f}$ up to $\CSp(2n)$, 
which in turn boils down to studying codimension--one subsets of $\LL(n,2n)$, see \cite{Gutt2019} for  more details.\par

\subsection{3D symplectic Monge--Amp\`ere equations as hyperplane sections of $\LL(3,6)$}\label{sub3DSMAE}
The main concern of this paper are 3D Monge--Amp\`ere equations, i.e., Monge--Amp\`ere equations  with $3$ independent variables: we set then $n=3$. In the framework we have just outlined, a general 3D Monge--Amp\`ere equation, regarded as a hypersurface of $J^2$, can be given in terms of integral elements $L_{\ff}$ as follows
\begin{equation}\label{eq:gen.MAE}
\mathcal{E}:=\{\ff\in J^2\,\,|\,\,\Phi|_{L_{\ff}}=0\}\,,
\end{equation}
where 
\begin{equation}\label{eq:Phi.per.MAE}
\Phi=\Phi_{ijk}dy^i\wedge dy^j\wedge dy^k\,,\,\,(y^1,\dots,y^6)=(x^1,x^2,x^3,u_1,u_2,u_3)\,,\,\,\Phi_{ijk}\in C^\infty(J^1)\,,
\end{equation}
is a 3--differential form.
By using the system of coordinates \eqref{eq:jet.coordinates}, it suffices to substitute 
\begin{equation}\label{eq:substitution.hor}
u_i\to u_{ij}dx^j 
\end{equation} 
in \eqref{eq:Phi.per.MAE} in order to obtain a local coordinate description of the equation  \eqref{eq:gen.MAE}: indeed, substitution \eqref{eq:substitution.hor} gives us a multiple of the ``volume form'' $dx^1\wedge dx^2\wedge dx^3$, whose coefficient,  equated to zero, locally represents $\E$.
\begin{example}\label{ex:oriz.and.equival}
Replacement \eqref{eq:substitution.hor} above, performed on   the differential 3--form  
$$
\Phi=du_1\wedge du_2\wedge du_3-kdx^1\wedge dx^2\wedge dx^3\,,\quad k\in\mathbb{R}\,,
$$ 
yields 
$\big(\det\|u_{ij}\|-k\big)dx^1\wedge dx^2\wedge dx^3$, whose coefficient, equated to zero, is  the Monge--Amp\`ere equation 
$$
\det\|u_{ij}\|=k\,.
$$ 
After a total Legendre transform \eqref{eq:legendre.partial} ($m=n=3$),  
$\Phi$ reads $\widetilde{\Phi}=kd\widetilde{u}_1\wedge d\widetilde{u}_2\wedge d\widetilde{u}_3 + d\widetilde{x}^1\wedge d\widetilde{x}^2\wedge d\widetilde{x}^3$, whose  associated Monge--Amp\`ere equation    is $k\det\|\widetilde{u}_{ij}\|+1=0$. Had we  considered the transformation \eqref{eq:legendre.partial} with $m=1$ and $n=3$, then $\Phi$ would have become $ \widetilde{\Phi}=kd\widetilde{u}_1\wedge d\widetilde{x}^2\wedge d\widetilde{x}^3 + d\widetilde{x}^1\wedge d\widetilde{u}_2 \wedge d\widetilde{u}_3$, whose associated   Monge--Amp\`ere equation     would be   
$k\widetilde{u}_{11}+\widetilde{u}^\sharp_{11}=k\widetilde{u}_{11}+\widetilde{u}_{22}\widetilde{u}_{33}-\widetilde{u}_{23}^2=0$.
\end{example}
It turns out that a Monge--Amp\`ere equation 
is described by \eqref{eq:equation},  where $F$ is  a linear combination of the minors of the Hessian matrix $\|u_{ij}\|$ with coefficients in $C^\infty(J^1)$.

\begin{definition}\label{defMAEq}
An equation \eqref{eq:equation}, where $F$ is a linear combination of the minors of the Hessian matrix $\|u_{ij}\|$ with coefficients in $C^\infty(J^1)$, is called a  \emph{Monge--Amp\`ere equation}; if the coefficients are  constant, then it is called \emph{symplectic}. 
\end{definition}
A general Monge--Amp\`ere equation $\mathcal{E}$ with 3 independent variables, in view of Definition \ref{defMAEq}, can be then written down as
\begin{equation}\label{eq.MAE}
\mathcal{E}:=\,\left\{\,A\det\|u_{ij}\| + \sum_{i\leq j} B_{ij}u^\sharp_{ij} + \sum_{i\leq j}C^{ij}u_{ij} + D=0\,\right\}\,,
\end{equation}
where we recall that $\|u^\sharp_{ij}\|$ is the cofactor matrix of $\|u_{ij}\|$ and $A,B_{ij},C^{ij},D\in C^\infty(J^1)$. Thus, 3D symplectic Monge--Amp\`ere equations are subsets \eqref{eq.MAE} with $A,B_{ij},C^{ij},D\in \mathbb{R}$.
Note that, as codimension--one subsets of the Lagrangian Grassmanian $\LL(3,6)$, cf. \eqref{eq:Lagr.Grass}, symplectic Monge--Amp\`ere equations are  hypersurfaces  of the simplest kind, that is, \emph{hyperplane sections} of $\LL(3,6)$; in the last Section \ref{secLinSec} we show that, at least over the field of complex number, there are only four possible geometries for  such hyperplane sections.\par

In order to obtain a faithful parametrization, it is convenient to consider a special subclass of differential $3$--forms \eqref{eq:Phi.per.MAE}, namely the linear subspace consisting of those forms $\Phi$,  such that $\omega^{ij}\Phi_{ijk}=0$, where $\omega=dx^i\wedge du_i$ locally represents (the conformal class of) the natural  symplectic form on $\CC$. Such forms, introduced in \cite{MR2352610}, are called \emph{effective} and they are in one--to--one correspondence with 3D  Monge--Amp\`ere equations,  up to a nowhere zero factor: this is the reason why,  in Section \ref{secMAESpace}, we will be considering  the projectivization of the space of effective $3$--forms (at a point $\f$); not only it leads to a strict one--to--one parametrization (of \emph{symplectic} Monge--Amp\`ere equations), but it also allows to work with the symplectic group $\Sp(\CC_{\f})=\Sp(6,\R)$, rather than its conformal counterpart. From Section \ref{secMAESpace} on,  we will be working on $\C$ and then  group $\Sp(\CC_{\f})=\Sp(6,\C)$ will be a simple one, thus making it possible to deploy the whole machinery of representation theory.\par
In the last Section \ref{secLinSec} the reader will find another definition of a 3D symplectic Monge--Amp\`ere equation, see Definition \ref{defMAEComplex}: it is formally analogous to Definition \ref{defMAEq} above, only over the field of complex numbers.

\subsection{The Kushner--Lychagin--Rubtsov (KLR) invariant and the cocharacteristic variety}\label{secKLRform}
In \cite{MR2352610} the authors have showed that with any    Monge-Amp\`ere equation \eqref{eq.MAE} (which has been defined by means of the 3--form \eqref{eq:Phi.per.MAE}) one can associate a quadratic form via
\begin{equation}\label{eq.JGG}
\omega^{i_1j_1}\omega^{i_2j_2}\Phi_{a\, i_1 i_2}\Phi_{b\, j_1 j_2}d y^a d y^b\, ,
\end{equation}
where 
\begin{multline*}
\Phi_{456}=A\,,\,\,
\Phi_{156}=B_{11}\,,\,\,
\Phi_{146}=-\Phi_{256}=-B_{12}\,,\,\,
\Phi_{145}=\Phi_{356}=B_{13} \,,\,\,
\Phi_{246}=-B_{22} \,,\,\,
\Phi_{346}=-\Phi_{245}=-B_{23}  \,,\,\,
\\
\Phi_{345}=B_{33} \,,\,\,
\Phi_{234}=C^{11} \,,\,\,
\Phi_{235}=-\Phi_{134}=C^{12}\,,\,\,
\Phi_{124}=\Phi_{236}=C^{13} \,,\,\,
\Phi_{135}=-C^{22} \,,\,\,
\Phi_{125}= -\Phi_{136}=C^{23}\,,
\\
\Phi_{126}=C^{33} \,,\,\,
\Phi_{123}=D \,.
\end{multline*}
and $(y^1,\dots,y^6)$ are the same as in \eqref{eq:Phi.per.MAE}.
\begin{remark}
The quadratic form \eqref{eq.JGG} above can be given without employing any coordinates, see  \eqref{eq:mappa.KLR}; from now on, both of them will be referred to as the \emph{KLR invariant} of the Monge-Amp\`ere equation \eqref{eq.MAE}.
\end{remark}
Theorem \ref{thFirstOriginalResult} below shows that the KLR invariant   \eqref{eq.JGG} is equivalent to the Hitchin moment map  on the space parametrizing symplectic Monge--Amp\`ere equations, and it will be thoroughly reviewed and deepened  in Section \ref{subsMomMap}; moreover,  
Corollary \ref{corEquivalencyMomentCoChar} shows that, if the invariant \eqref{eq.JGG} is equated to zero, one obtains a quadratic hypersurface in $\CC$, which is naturally linked with the characteristics of the Monge--Amp\`ere equation at hand via \emph{projective duality}. 
Such a duality is the rationale behind  the choice of the prefix ``co" in the next definition.

\begin{definition}\label{def.cochar.var}
The zero locus of the homogeneous $2^{\Nd}$ order polynomial \eqref{eq.JGG} is called the \emph{cocharacteristic variety} of the 3D Monge--Amp\`ere equation \eqref{eq.MAE}.
\end{definition}

We stress again that later on in Section \ref{secLinSec}, above Definition \ref{def.cochar.var} will be reformulated in the complex setting for \emph{symplectic} 3D Monge--Amp\`ere equations, see Definition \ref{defCoCarComp}: to avoid uninteresting complications, the proofs of the main results of this paper will be given over the field of complex numbers.\par


 
%
%

\subsection{
Monge--Amp\`ere equations of Goursat--type and a non--linear generalization of his idea: contact cone structures}\label{sec:Goursat}
%

At the very beginning we mentioned Goursat pioneering paper \cite{MR1504329}: there, he proposed  
the following way of obtaining  Monge-Amp\`ere equations:  substituting  $du_{1}=u_{11}dx^{1}+u_{12}dx^{2}$ and $du_{2}=u_{12}dx^{1}+u_{22}dx^{2}$ into the  Pfaffian system
\[
\left\{
\begin{array}{ll}
du_{1}-b_{11}dx^{1}-b_{12}dx^{2}=0\, , &
\\
& \qquad b_{ij}=b_{ij}(x^{1},x^{2},u,u_{1},u_{2})\, ,
\\
du_{2}-b_{21}dx^{1}-b_{22}dx^{2}=0\, , &
\end{array}
\right.
\]
and then requiring  its (non trivial) compatibility. Such a procedure was then generalized by Goursat  to any number $n$ of independent variables, by considering 
the system
\begin{equation}\label{eq:pfaffian.Goursat.general}
\alpha_i:=du_{i}-\sum_{j=1}^{n}b_{ij}dx^{j}=0\,,\,\,\,i=1,\dots,n\,,\,\,\,b_{ij}%
=b_{ij}(x^{1},\dots,x^{n},u,u_{1},\dots,u_{n})\,,
\end{equation}
thus getting the equation
\begin{equation}\label{eq.MAEs.Goursat.type}
\det\|u_{ij}-b_{ij}\|=0\,.
\end{equation}
\begin{definition}\label{def:Goursat.MAE}
An equation \eqref{eq:equation}, where $F$ is given by the left--hand side of  \eqref{eq.MAEs.Goursat.type}, is called a  Monge--Amp\`ere equation of \emph{Goursat type}.
\end{definition} 
The same equation is obtained if a replacement $b_{ij}\to b_{ji}$ is performed in \eqref{eq:pfaffian.Goursat.general}, i.e., if one starts from the Pfaffian system
\begin{equation*}
\widetilde{\alpha}_i:=du_{i}-\sum_{j=1}^{n}b_{ji}dx^{j}=0\,,\,\,\,i=1,\dots,n\, .
\end{equation*}
Equation \eqref{eq.MAEs.Goursat.type} belongs to a special class of Monge-Amp\`ere equations, i.e., those whose symbol has rank less or equal to 2; see \cite{MR2985508} for more details.\par

An equivalent approach, leading to the same equation \eqref{eq.MAEs.Goursat.type},  goes as follows.\par
Let us consider the jet space $J^1$ with coordinates $(x^i,u,u_i)$ and, within its canonical contact distribution $\CC$, let us single out two  $n$--dimensional sub--distributions:
\begin{equation}\label{eq:D}
\mathcal{D}:=\langle \partial_{x^i} + u_i\partial_u+b_{ij}\partial_{u_j}\rangle_{i=1,\ldots n}=\{\alpha_i=0\, , \theta=0 \}
\,,\quad \mathcal{D}^\perp=\langle \partial_{x^i} + u_i\partial_u+b_{ji}\partial_{u_j}\rangle_{i=1,\ldots n}=\{\widetilde{\alpha}_i=0\, , \theta=0 \}\, .
\end{equation}
Then, the equation \eqref{eq.MAEs.Goursat.type} is obtained by requiring that a general integral element  $\langle \partial_{x^i} + u_i\partial_u+u_{ij}\partial_{u_j}\rangle_{i=1,\ldots n}$ of $\CC$ nontrivially intersects $\mathcal{D}\cup\mathcal{D}^\perp$. In the above cited paper \cite{MR2985508} it is proved that any hyperplane containing a line of $\mathcal{D}\cup\mathcal{D}^\perp$ is a characteristic hyperplane for the equation \eqref{eq.MAEs.Goursat.type}, and vice versa.
%
%

\smallskip
Our idea for generalizing Goursat approach stems from a simple observation: the union $\V:=\mathcal{D}\cup\mathcal{D}^\perp$ is a  distribution $\f\in J^1\to\V_{\f}\subset\CC_{\f}$ of (very degenerate) $n$--codimensional quadric affine conic varieties; therefore, if from $\V$ one gets a Monge--Amp\`ere equation whose characteristic hyperplanes lie in $\V$, which PDEs would one obtain,  starting from a more general quadric (for instance, non--degenerate and not necessarily of codimension $n$)  in $\CC$?
\par
This question is   the paper's backbone and it leads, in particular, to the problem of classifying all quadric hypersurfaces in a 6--dimensional symplectic space, up to symplectomorphisms, which will be done in Section \ref{SecNormFormQadr}.\par
In Sections \ref{secForwardRecepi}, \ref{secSezioneNuova} and \ref{secBackRecepi} below we will be rather focusing  on some practical examples of how to pass from a 3D  $2\Nd$ order PDE to a (quadratic) contact cone structure, accordingly to the following definition.
 \begin{definition}\label{defCCS}
A \emph{contact cone structure} $\V$ on $J^1$ is a smooth assignment of an affine conic variety $\V_{\f}\subset\CC_{\f}$ or, equivalently, of a projective variety $\p\V_{\f}\subset\p\CC_{\f}$, in each point $\f\in J^1$.
\end{definition}
The cocharacteristic variety associated with a Monge--Amp\`ere equation (see Definition \ref{def.cochar.var}) is our first example of a \emph{quadratic} generalization of a \emph{linear} sub--distribution of a contact distribution, henceforth called a \emph{quadric contact cone structure}. It turns out that there is more than one way to associate a quadric contact cone structure with a $2\Nd$ order PDE, see Section \ref{secForwardRecepi} below: nevertheless, at least for 3D Monge--Amp\`ere equations with \emph{non--degenerate symbol}, the cocharacteristic variety is the only natural one (this will follows from Corollary \ref{corEquivalencyMomentCoChar} later on).

\section{The contact cone structure associated with a $2\Nd$ order PDE}\label{secForwardRecepi}
In order to associate a contact cone structure with a $2\Nd$ order PDE and, in particular, with a Monge--Amp\`ere equation, we need to pass through the notion of a characteristic which is, in turn, related to that of a rank--one vector. To this purpose, we need the following definition.
\begin{definition}\label{def:prolong}
Any $\kappa$--dimensional subspace $H_{\f}$ of the contact space $\CC_{\f}$, with $\kappa\leq n$, can be prolonged to a submanifold $H^{(1)}_{\f}$ of $J^2_{\f}$ defined by
$$
H^{(1)}_{\f}:=\{\ff\in J^2_{\f}\,\,\mid\,\, L_{\ff}\supseteq H_{\f}\}\,.
$$
\end{definition}
\begin{remark}\label{rem:prol.hyper}
It is easy to see that if $H_{\f}$ is a hyperplane of $L_{\ff}$, then $H^{(1)}_{\f}$ is a curve in $J^2_{\f}$ passing through  $\ff$.
\end{remark}

\begin{remark}\label{rem.regular.points}
As we said in Section \ref{sec:starting}, we work with regular points of $2\Nd$ order PDEs, so that any point $\ff\in \mathcal{E}$ of a $2\Nd$ order PDE $\mathcal{E}$ is assumed regular.
For the sake of simplicity, and to not overload the notation, the set of regular points $\mathcal{E}_{\mathrm{reg}}$ of a $2\Nd$ order PDE $\mathcal{E}$ will be denoted by the same symbol we use for the PDE, i.e., by $\mathcal{E}$. For the same reason, the projection $\check{J}^1:=\pi_{2,1}(\mathcal{E}_{\mathrm{reg}})\subseteq J^1$ of $\mathcal{E}_{\mathrm{reg}}$ onto $J^1$ will be denoted by $J^1$.
\end{remark}
From the point of view of contact geometry, the notions that have been introduced in Section \ref{secPRELIM} above can be recast, more geometrically, as follows.
\begin{definition}\label{def:char.hyper}
A \emph{Cauchy datum} for a $2\Nd$ order PDE $\E\subset J^2$ is an $(n-1)$--dimensional integral submanifold $\Sigma$ of the contact
distribution $\CC$ on $J^1$. It is \emph{characteristic} at a point $\ff\in\mathcal{E}$
if the prolongation $(T_{\f}\Sigma)^{(1)}$ is tangent to  $\E_{\f}$ at the point $\ff$.
In this case, $T_{\f}\Sigma$ is also called a \emph{characteristic} hyperplane (at $\f=\pi_{2,1}(\ff)$).
\end{definition}

\subsection{Rank of vertical vectors and characteristics}\label{sec:rank}

\begin{definition}
The \emph{rank} of a vector
\begin{equation}\label{eq:vertical.vector}
\nu_{\ff}=\sum_{i\leq j} \nu_{ij}\partial_{u_{ij}}\big|_{\ff}\,,\quad \nu_{ij}\in\R\, ,
\end{equation}
of $T_{\ff}J^2_{\f}$ is the rank of the matrix
\begin{equation}\label{eq:matrix.nu}
\begin{pmatrix}
\nu_{11}&\dots&\nu_{1n}
\\
\vdots & \ddots&\vdots
\\
\nu_{1n}&\dots&\nu_{nn}
\end{pmatrix}\,.
\end{equation}
The rank of a line $\ell_{\ff}\subset T_{\ff}J^2_{\f}$ is the rank of any vector $v_{\ff}$,  such that $\ell_{\ff}=\langle v_{\ff}\rangle$.
\end{definition}
A direct computation shows  that  the rank of a line $\ell_{\ff}$ in  $T_{\f}J^2_{\f}$ is invariant with respect to  contactomorphisms.

\smallskip\noindent
Below we shall clarify the nature of such an invariant. Let us fix a point $\ff\in J^2$ and a vector \eqref{eq:vertical.vector}.
Take a curve $\ff(t)$ in $J^2_{\f}$ such that $\ff(0)=\ff$ and $\ff'(0)=\nu_{\ff}$: it will be given locally by
\begin{equation}\label{eq:fft}
\ff(t)=\big(\f,u_{ij}(t)\big)\,, 
\end{equation}
with $u'_{ij}(0)=\nu_{ij}$ and it will correspond to a $1$--parametric family of integral elements (cf. \eqref{eq:TangSpaToJKMU}):
$$
L_{\ff(t)}
=
\Span{
\partial_{x^i}\big|_{\f}+u_i^0\partial_u\big|_{\f} + u_{ij}(t)\partial_{u_{j}}\big|_{\f}
}_{i=1,\dots,n}=\Span{D^{(1)}_i\big|_{\f} + u_{ij}(t)\partial_{u_{j}}\big|_{\f}}_{i=1,\dots,n}\, .
$$
Since  $L_{\ff(t)}$ and  $L_{\ff}$  are two
$n$--dimensional subspaces of the $2n$--dimensional vector space $\mathcal{C}_{\theta_{1}}$, their intersection  $L_{\ff(t)}\cap L_{\ff}$ is, generically, zero--dimensional. However, there are curves \eqref{eq:fft} for which $L_{\ff(t)}\cap L_{\ff}$ is $\kappa$--codimensional $\forall\,t$, i.e., $L_{\ff(t)}$ ``rotates'' around an $(n-\kappa)$--dimensional subspace of $L_{\ff}$. From an infinitesimal viewpoint, this means that there are tangent directions \eqref{eq:vertical.vector}
along which $L_{\ff(t)}$, with $\ff(t)$ given by \eqref{eq:fft}, moves away  from $L_{\ff}$ by retaining some ``common piece''. This motivates the following definition.
\begin{definition}\label{def:deviation}
We say that $L_{\ff(t)}$ \emph{has a deviation of order $\kappa$ from $L_{\ff}$} if $\dim\big(L_{\widetilde{\ff}(t)}\cap L_{\ff}\big)=n-\kappa$, for small nonzero $t$ and $\widetilde{\ff}(t)$ is the linear approximation of $\ff(t)$ at  $\ff$. In other words, if
$$
\dim\big(L_{\ff+\ff'(0)t}\cap L_{\ff}\big)=n-\kappa\,,\,\,\forall\,t\in (-\epsilon,\epsilon)\smallsetminus \{ 0\}\,.
$$
\end{definition}
\begin{proposition}\label{prop:CorrVecDefInfinNonGen}
The vector \eqref{eq:vertical.vector} has rank $\kappa$ if and only if $L_{\ff(t)}$, with $\ff(t)$ given by \eqref{eq:fft}, has a deviation of order $\kappa$ from $L_{\ff}$.
\end{proposition}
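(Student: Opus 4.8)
The plan is to turn the statement into an exact finite-dimensional linear-algebra computation inside the single symplectic space $\CC_{\f}$, using the adapted basis $\{D^{(1)}_i|_{\f},\,\partial_{u_j}|_{\f}\}_{i,j=1,\dots,n}$ of the contact plane. First I would record the two Lagrangian planes in this basis. By \eqref{eq:TangSpaToJKMU} the integral element $L_{\ff}$ is spanned by the $n$ vectors $v_i:=D^{(1)}_i|_{\f}+\sum_{j=1}^n u^0_{ij}\partial_{u_j}|_{\f}$. Since Definition \ref{def:deviation} involves only the \emph{linear} approximation $\widetilde{\ff}(t)=\ff+t\,\nu_{\ff}$, the plane $L_{\widetilde{\ff}(t)}$ is spanned by $w_i(t):=v_i+t\sum_{j=1}^n\nu_{ij}\partial_{u_j}|_{\f}$; in particular the higher-order terms of the curve \eqref{eq:fft} play no role and the computation is exact rather than merely infinitesimal.

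The core step is to intersect the two planes. The triangular change of basis $D^{(1)}_i|_{\f}\mapsto v_i$ shows that $\{v_1,\dots,v_n,\partial_{u_1}|_{\f},\dots,\partial_{u_n}|_{\f}\}$ is again a basis of $\CC_{\f}$. Writing a generic element of $L_{\widetilde{\ff}(t)}$ as $\sum_i b_i w_i(t)$ and demanding that it equal some $\sum_i a_i v_i\in L_{\ff}$, I would compare coefficients in this basis: the $v_i$-components force $a_i=b_i$, and the $\partial_{u_j}|_{\f}$-components collapse to
\begin{equation*}
t\sum_{i=1}^n b_i\,\nu_{ij}=0\,,\qquad j=1,\dots,n\,.
\end{equation*}

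For $t\neq0$ the factor $t$ divides out, leaving the $t$-independent system $\sum_i b_i\nu_{ij}=0$ for all $j$; by the symmetry of the matrix \eqref{eq:matrix.nu} this says precisely that $b=(b_1,\dots,b_n)$ lies in $\ker(\nu_{ij})$. Hence, for every small nonzero $t$, the map $b\mapsto\sum_i b_i v_i$ identifies $L_{\widetilde{\ff}(t)}\cap L_{\ff}$ with $\ker(\nu_{ij})$, so $\dim\big(L_{\widetilde{\ff}(t)}\cap L_{\ff}\big)=n-\rank(\nu_{ij})$. Comparing with Definition \ref{def:deviation}, which records a deviation of order $\kappa$ exactly when this dimension equals $n-\kappa$, we read off that the order of deviation equals $\rank(\nu_{ij})$, i.e. the rank $\kappa$ of the vector $\nu_{\ff}$; both implications of the ``if and only if'' then follow at once.

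I do not anticipate a serious obstacle, since the argument is a single clean computation. The two points deserving care are: using the symmetry of $(\nu_{ij})$ so that the left-kernel condition produced by coefficient matching coincides with the ordinary kernel of dimension $n-\rank$; and observing that, after cancelling $t$, the intersection dimension is the same for all $t\in(-\epsilon,\epsilon)\smallsetminus\{0\}$, which is exactly what Definition \ref{def:deviation} requires.
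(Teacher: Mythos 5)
Your proof is correct and follows essentially the same route as the paper's: both reduce the claim to an elementary linear-algebra computation of $\dim\big(L_{\ff+\ff'(0)t}\cap L_{\ff}\big)$, finding it equal to $n-\rank(\nu_{ij})$ for all nonzero $t$. The only cosmetic difference is that the paper first normalizes $u_{ij}(\ff)=0$ ``without loss of generality'' and reads the intersection dimension off the rank of a $2n\times 2n$ block matrix, whereas you achieve the same reduction via the triangular change of basis $D^{(1)}_i|_{\f}\mapsto v_i$ and solve the resulting linear system explicitly (thereby also justifying the paper's WLOG step).
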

\begin{proof}
Without any loss of  generality, we can assume  that   $\ff$ has  all $2\Nd$ order jet  coordinates  equal to 0, i.e., that  $u_{ij}(\ff)=0$ $\forall i,j$. Then
$$
L_{\ff+\ff'(0)t}=\Span{D^{(1)}_i\big|_{\f} + u_{ij}'(0)t\,\partial_{u_{j}}\big|_{\f}}_{i=1,\ldots,n}
$$
and the intersection
\begin{equation*}
L_{\ff+\ff'(0)t}\cap L_{\ff}
\end{equation*}
has dimension $n-\kappa$ if and only if the block matrix
\begin{equation*}
\left(
\begin{array}{c|c}
\text{Id} & u_{ij}'(0)t \\
\hline
\text{Id} & 0
\end{array}
\right)\,,
\end{equation*}
has rank $n+\kappa$, which in turn is equivalent to the upper--right block having rank $\kappa$: but, in view of  \eqref{eq:fft}, this is the same as the rank of the matrix \eqref{eq:matrix.nu}.
\end{proof}
Putting together Definition \ref{def:deviation} and Proposition \ref{prop:CorrVecDefInfinNonGen} we obtain that to each line  $\ell_{\ff}\subset T_{\ff}J^2_{\f}$ of rank $\kappa$ we can associate an $(n-\kappa)$--dimensional subspace $H(\ell_{\ff})$ of the integral element $L_{\ff}$:
\begin{equation}\label{eq:correspondence.general}
\text{tangent lines of rank $\kappa$   in $T_{\ff}J^2_{\f}$ $\Rightarrow$ $(n-\kappa)$--dimensional subspaces of $L_{\ff}$}\,.
\end{equation}
The implication \eqref{eq:correspondence.general} is one--to--one only in the case of lines  of rank $1$. Indeed, the dimension of the prolongation $H^{(1)}$ of a subspace $H\subseteq L_{\ff}$ of codimension $\kappa\geq 2$ is greater than $1$, whereas it is equal to $1$ in the case $\kappa=1$ (see Remark \ref{rem:prol.hyper}).\par

We now analyse in more detail the case $\kappa=1$, i.e., rank--one lines, since they  are closely related with characteristic Cauchy data. In this case \eqref{eq:correspondence.general} becomes
\begin{equation}\label{eq:correspondence}
\text{tangent lines $\ell_{\ff}=\Span{\nu_{\theta_2}}\subset T_{\ff}J^2_{\f}$ of rank $1$  $\Leftrightarrow$ hyperplanes $H(\ell_{\ff})\subset L_{\ff}$}\,.
\end{equation}
Taking into account Definition \ref{def:char.hyper} and correspondence \eqref{eq:correspondence}, it is easy to realize that with  any rank--one line  $\ell_{\ff}$   is associated the hyperplane $H(\ell_{\ff})$ of $L_{\ff}$ and that $H(\ell_{\ff})$ is characteristic if the direction $\ell_{\ff}$ is included in the tangent space of the considered PDE.
\begin{definition}\label{def.char}
A line  $\ell_{\ff}\subset T_{\ff}J^2_{\f}$ of rank $1$ is \emph{characteristic} for a PDE \eqref{eq:equation} in $\ff\in\E$ if $\ell_{\ff}\subset T_{\ff}\E$.
\end{definition}
It is well known that matrices of rank $1$ have the $(i,j)$--entry equal to $\eta_i\eta_j$, where $\eta=(\eta_1,\eta_2,\ldots,\eta_n)$ is a vector in $\R^n$. Therefore, a $\rank$--one vector \eqref{eq:vertical.vector} has the form
\begin{equation}\label{eq:local.rank.1} 
\nu_{\ff}=\sum_{i\leq j}\eta_i\eta_j\partial_{u_{ij}}\big|_{\ff}\,,\quad\eta_i\in\R\,.
\end{equation}
Then the  hyperplane  $H(\ell_{\ff})$ corresponding to  $L_{\ff}$ via \eqref{eq:correspondence} is locally described by
\begin{equation}\label{eq:H}
H(\ell_{\ff})=\left\{\xi^iD^{(2)}_i\big|_{\ff} \mid \xi^i \eta_i=0\,, \quad \xi^i\in\R \right\} 
\end{equation}
or equivalently, as
\begin{equation}\label{eq:AccaComeNucleo}
H(\ell_{\ff})=\ker\eta\, ,\quad \eta=\eta_idx^i\in L^*_{\ff}\, .
\end{equation}
The claim \eqref{eq:AccaComeNucleo} is a direct consequence of an elementary property of symmetric rank--one $n\times n$ matrices:
\begin{equation*}
\ker (\eta_i\eta_j)=\Span{(\xi^1,\ldots, \xi^n)\in\R^n\mid \xi^i \eta_i=0}\, .
\end{equation*}
The converse reads as follows: given 
a hyperplane $H\subset L_{\theta_2}$, there is a covector $\eta\in T^*_{\x_0}\R^n$ (defined up to a nonzero factor) by an analogous formula to \eqref{eq:AccaComeNucleo}; the vector $\nu_{\theta_2}$, such that $H(\ell_{\ff})=H$ is then constructed by means of formula \eqref{eq:local.rank.1}, by using the components of $\eta$.
If we  look at Definition \ref{def.char} in local coordinates \eqref{eq:jet.coordinates}, we see that a line $\ell_{\ff}$ is of rank--one if it is spanned by a vector of type \eqref{eq:local.rank.1}. Therefore, it is characteristic for PDE \eqref{eq:equation} at $\ff\in\E$ if
\begin{equation}\label{eq:char.general.local}
\sum_{i\leq j}\left.\frac{\partial F}{\partial u_{ij}}\right|_{\ff}\,\eta_i\eta_j=0\,,
\end{equation}
which  coincides with the equation of characteristics (see, for instance, \cite{Petrovsky1992}). In other words, the covector $\eta_idx^i$ annihilates the symbol of the equation (cf. \eqref{eq:symbol.intro}).
\begin{example}\label{ex:u12.characteristics}
In the case $n=2$, i.e., with $2$ independent variables, in view of correspondence \eqref{eq:correspondence}, to the line $\ell_{\ff}$ it corresponds a line $H(\ell_{\ff})$ in $L_{\ff}$ via   \eqref{eq:correspondence}.
On account of \eqref{eq:H}, \eqref{eq:char.general.local} reads as
\begin{equation}\label{eq:char.class}
\left.\frac{\partial F}{\partial u_{11}}\right|_{\ff} ({\xi^1})^2 - \left.\frac{\partial F}{\partial u_{12}}\right|_{\ff}{\xi^1}{\xi^2} + \left.\frac{\partial F}{\partial u_{22}}\right|_{\ff} ({\xi^2})^2=0\,.
\end{equation}
For instance, if we consider the equation $\E:=\{u_{12}=0\}$ from  Example \ref{ex:introduction}, the characteristic equation \eqref{eq:char.general.local} reads
$
\eta_1\eta_2=0
$
or, in the form \eqref{eq:char.class},
$
\xi^1\xi^2=0\,.
$
So, characteristic lines  $\ell_{\ff}$, where $\ff=(x^1_0,x^2_0,u^0,u^0_1,u^0_2,u^0_{11},0,u^0_{22})\in\E$, are spanned by \eqref{eq:local.rank.1} where either $\eta_1$ or $\eta_2$ is equal to zero, i.e., lines 
$$
\ell_{\ff}^+=\Span{\partial_{u_{11}}\big|_{\ff}}\quad\text{or}\quad \ell_{\ff}^-=\Span{\partial_{u_{22}}\big|_{\ff}}\, , 
$$
and the corresponding characteristic hyperplanes (that,  in this case,  are lines of the integral element $L_{\ff}$) are
$$
H(\ell_{\ff}^+)=\Span{D^{(2)}_2\big|_{\ff}}=\Span{\partial_{x^2}\big|_{\ff}+u^0_2\partial_{u}\big|_{\ff} +u^0_{22}\partial_{u_2}\big|_{\ff} }\,,\quad
H(\ell_{\ff}^-)=\Span{D^{(2)}_1\big|_{\ff}} =\Span{\partial_{x^1}\big|_{\ff}+u^0_1\partial_{u}\big|_{\ff} +u^0_{11}\partial_{u_1}\big|_{\ff} } \,.
$$
In fact, in Example \ref{ex:introduction}, according to the definitions given in the present section, the curve $\Sigma=\{\Phi(x^1)\mid x^1\in\R\}$ is a Cauchy datum as it is a $1$--dimensional integral submanifold of the contact distribution $\CC$ (cf. Definition \ref{def:char.hyper}). Its prolongation  (cf. Definition \ref{rem:prol.hyper}) is  $\Sigma^{(1)}_{\f}=\{(x^1_0,0,x^1_0,1,0,1,0,t)\mid t\in\mathbb{R}\}$,  and $T_{\ff}\Sigma^{(1)}$ is a characteristic hyperplane, since  $T_{\ff}\Sigma^{(1)}=\ell_{\ff}^-\subset T_{\ff}\mathcal{E}_{\f}$.
%
\end{example}

\subsection{Contact cone structure associated with a $2\Nd$ order PDE in 3 independent variables of non--elliptic type}\label{sec.andata}

From now on, unless specified otherwise, we will be considering  only PDEs in $3$ independent variables, i.e.,
\begin{equation}\label{eq:equation.3.indip.var}
\E:=\,\big\{\,F(x^1,x^2,x^3,u,u_1,u_2,u_3,u_{11},\dots,u_{33})=0\,\big\}
\end{equation}
that are non--elliptic.
Below we will  see how to construct, starting from a PDE \eqref{eq:equation.3.indip.var}, a contact cone structure  $\V: \f\in J^1\to \V_{\f}\subset \CC_{\f}$, see Definition \ref{defCCS}.\par 
Such a construction breaks down into 4 steps.
%
\begin{enumerate}
\item Fix a point $\ff=\big(x^i_0,u_0,u_i^0,u_{ij}^0\big)\in\E$, i.e., such that
$
F\big(x^i_0,u_0,u_i^0,u_{ij}^0\big)=0\,.
$
\item Consider the set of rank--one lines at $\ff$ that are  tangent to $\E$. Such a set is given by vectors of type \eqref{eq:local.rank.1} such that $(\eta_1,\dots,\eta_n)$ satisfies  \eqref{eq:char.general.local}. Note that such vectors are organized either in two distinct families $\nu^+$ and $\nu^-$,  if the  PDE \eqref{eq:equation.3.indip.var} is hyperbolic at  $\ff$, or in one single family $\nu^+=\nu^-=\nu$ if it is parabolic at  $\ff$
In both cases, such families are $2$-parametric. This implies that the corresponding families of rank--one \emph{lines} are $1$-parametric. We denote such lines by $\ell^+_{\ff}(t)$ and $\ell^-_{\ff}(t)$, where $t$ is the aforementioned parameter.
\item Let us consider the line $\ell^+_{\ff}(t)$ only, since  the same reasoning works for  $\ell^-_{\ff}(t)$ as well. To each line  $\ell^+_{\ff}(t)$ we   associate  the hyperplane $H(\ell^+_{\ff}(t))\subset L_{\ff}$, see \eqref{eq:H}. Then, by varying the parameter $t$, the hyperplanes $H(\ell^+_{\ff}(t))$ sweep a cone of $L_{\ff}$, that we denote by $\V_{\ff}$, in the following sense: the generatrix $v(\ell^+_{\ff}(t))$ of $\V_{\ff}$, which is a line passing through  $\f=\big(x^i_0,u_0,u_i^0\big)$, will be given as an infinitesimal intersection
    $$
    v(\ell^{+}_{\ff}(t)):=\lim_{\epsilon\to 0}\,H(\ell^+_{\ff}(t))\cap H(\ell^+_{\ff}(t+\epsilon))\,.
    $$
Summing up, to any point $\ff\in\E$ we can associate two  cones in $L_{\ff}$:
    $$
    \V^+_{\ff}:=\bigcup_t\, v(\ell^+_{\ff}(t))\,,\quad \V^-_{\ff}:=\bigcup_t\, v(\ell^-_{\ff}(t))
    $$
\item If now we let vary the point $\ff\in\E$ over $\f$ we obtain a \emph{conic  variety} $\V_{\f}\subseteq \CC_{\f}$: 
    \begin{equation}\label{eq:CCC}
    \V_{\f}:=\V_{\f}^+\cup \V_{\f}^-=\bigcup_{\ff\in \E_{\f}}\V_{\ff}\, ,\quad  \V_{\f}^\pm:=\bigcup_{\ff\in \E_{\f}}\V^\pm_{\ff}\, ,\quad \V_{\ff}:=\V^+_{\ff}\cup\V^-_{\ff}\, .
    \end{equation}
\end{enumerate}
%
\begin{definition}
The conic sub--distribution $\V:\f\in J^1\to\V_{\f}\subset\CC_{\f}$, with  $\V_{\f}$  given by \eqref{eq:CCC}, is called the \emph{contact cone structure} of the  PDE \eqref{eq:equation.3.indip.var}.
\end{definition}

\section{Quadric contact cone structures associated with 3D Monge--Amp\`ere equations}\label{secSezioneNuova}

In this section we work with  3D Monge--Amp\`ere equations, i.e., Monge--Amp\`ere equations in $3$ independent variables.

As we will see in Section \ref{subsVarpiAndOrbits}, up to symplectic equivalence and signature, there are only four types of symplectic 3D Monge--Amp\`ere equations: below we pick a representative for each type and compute its contact cone structure, that will turn out to be a \emph{quadric} contact cone structure.  At the end of each subsection, we quickly comment on the relationship between the quadric contact cone structure and the cocharacteristic variety of each considered PDE.

\begin{notation}\label{notZetaQu}
From now on, the coordinates on $\CC_{\f}$ dual to the (truncated) total derivatives $\left.D_i^{(1)}\right|_{\f}$ and vectors $\partial_{u_i}|_{\f}$ will be denoted by $z^i$ and $q_i$, respectively:  such a choice is dictated by a purely aesthetic concern. 
\end{notation} 

\subsection{The quadric contact cone structure of the equation $\det\|u_{ij}\|=1$}\label{sec:var.car.det.1}

Let us consider the equation
\begin{equation}\label{eq:det.1}
\E:=\{\det\|u_{ij}\|=1\}\, 
\end{equation}
and apply to it, step by step, the scheme given at the beginning of Section \ref{sec.andata}. 
Recall that  $u_{ij}^\sharp$ denotes  the $(i,j)$-entry of the cofactor matrix of $\|u_{ij}\|$.
\begin{enumerate}
\item Let us fix a point $\ff\in\E$,  
$\ff=\big(x^i_0,u_0,u_i^0,u_{ij}^0\big)$
with
$$
u_{11}^0= \frac{(u_{12}^0)^2u_{33}^0-2u_{12}^0u_{13}^0u_{23}^0+(u_{13}^0)^2u_{22}^0+1}{u_{11}^{0\sharp}}\, ,
$$
assuming $u_{11}^{0\sharp}\neq 0$. We also assume  that $\ff$ is not an elliptic point for the equation \eqref{eq:det.1}.
\item Equation \eqref{eq:char.general.local}, which  reads now 
$\sum_{i,j}u_{ij}^\sharp\eta_i\eta_j=\sum_{i\leq j}(2-\delta_{ij})u_{ij}^\sharp\eta_i\eta_j=0$, can be   solved   with respect to  $\eta_1$,   obtaining 
\begin{equation}\label{eq:eta.1.+}
\eta_1^{\pm}(\eta_2,\eta_3)= \frac{-u_{12}^{0\sharp}\,\eta_2-u_{12}^{0\sharp}\,\eta_3 \pm \sqrt{B}}{u_{11}^{0\sharp}}\, ,
\end{equation}
with
$$
B=B(\eta_2,\eta_3):=-\eta_2^2 u_{33}^0+2 \eta_2 \eta_3 u_{23}^0-\eta_3^2 u_{22}^0\,.
$$
All the rank--1 vectors $\nu_{\ff}$ of the PDE \eqref{eq:det.1} at the (non--elliptic) point $\ff$,
in view of formula \eqref{eq:local.rank.1}, are described by the two $2$--parametric families
\begin{equation}\label{eq:nu.appoggio}
\nu_{\ff}^{\pm}(\eta_2,\eta_3)=(\eta_1^{\pm})^2\partial_{u_{11}}\big|_{\ff} + \eta_1^{\pm}\eta_2\partial_{u_{12}}\big|_{\ff} + \eta_1^{\pm}\eta_3\partial_{u_{13}}\big|_{\ff} +\eta_2^2\partial_{u_{22}}\big|_{\ff} + \eta_2\eta_3\partial_{u_{23}}\big|_{\ff} + \eta_3^2\partial_{u_{33}}\big|_{\ff}\, ,
\end{equation}
with $\eta_1^{\pm}$  given by \eqref{eq:eta.1.+}.
We focus now only on the family $\nu^{+}_{\ff}(\eta_2,\eta_3)$, since, for the other one, computations and reasonings are   the same. Since  we are interested in the line spanned by $\nu^+_{\ff}(\eta_2,\eta_3)$, we shall substitute $\eta_2=t$ and $\eta_3=1$ in \eqref{eq:nu.appoggio}, thus obtaining  $\ell^+_{\ff}(t):=\Span{\nu^+_{\ff}(t,1)}$. Accordingly, we set  $\eta_1^+(t):=\eta_1^+(t,1)$, $B(t)=B(t,1)$.
\item  With the line $\ell^+_{\ff}(t)$   is associated the hyperplane $H(\ell^+_{\ff}(t)\subset L_{\ff}$, which, in view of \eqref{eq:H}, is given by
\begin{equation*}
H(\ell^+_{\ff}(t))
=\Span{   -t D^{(2)}_1(\ff) + \eta_1^+(t)\,D^{(2)}_2(\ff) \,\,,\,\, D^{(2)}_2(\ff) -tD^{(2)}_3(\ff)  }\,.
\end{equation*}
In order to study $\lim_{\epsilon\to 0}\,H(\ell^+_{\ff}(t))\cap H(\ell^+_{\ff}(t+\epsilon))$,    we consider the system
\begin{equation}\label{eq:system.inf.det.1}
\left\{
\begin{array}{l}
\eta_1^+(t)\,\xi^1+t\xi^2+\xi^3=0\, ,
\\
\\
\eta_1^+(t+\epsilon)\,\xi^1+(t+\epsilon)\xi^2+\xi^3=0\, .
\end{array}
\right.
\end{equation}
By solving system \eqref{eq:system.inf.det.1} with respect to  $\xi^2=\xi^2(\xi^1,t,\epsilon)$, $\xi^3=\xi^3(\xi^1,t,\epsilon)$, and then by computing   $\lim_{\epsilon\to 0}\xi^2$ and $\lim_{\epsilon\to 0}\xi^3$, we obtain  
$$
\xi^2= -\frac{\xi_1\left(-\sqrt{B(t)}u_{12}^{0\sharp}-u_{33}^0t+u_{23}^0\right)}
{\sqrt{B(t)}u_{11}^{0\sharp}}\,,\quad
\xi^3=\frac{\xi_1\left(\sqrt{B(t)}u_{13}^{0\sharp}-u_{23}^0t+u_{22}^0\right)}
{\sqrt{B(t)}u_{11}^{0\sharp}}\, ,
$$
so that
$$
v(\ell^{+}_{\ff}(t))=\Span{\, \sqrt{B(t)}u_{11}^{0\sharp}  \,  D^{(2)}_1(\ff)   +\left(\sqrt{B(t)}u_{12}^{0\sharp}+u_{33}^0t-u_{23}^0\right)  D^{(2)}_2(\ff)
+ \xi_1\left(\sqrt{B(t)}u_{13}^{0\sharp}-u_{23}^0t+u_{22}^0\right) D^{(2)}_3(\ff)}\subset L_{\ff}\,.
$$
\item Had  we  considered the family $\nu^{-}_{\ff}(\eta_2,\eta_3)$, we would have come to the line $v(\ell^{-}_{\ff}(t))$.
If we let vary the parameter $t$ and the point $\ff$ on the fiber $\E_{\f}$, the above--found lines $v(\ell^{\pm}_{\ff}(t))$   give a conic variety inside the contact hyperplane $\CC_{\f}$ of $T_{\f}J^1$:
\begin{equation}\label{eq:cone.structure.det.1}
\V_{\f}:z^1q_1+z^2q_2+z^3q_3=0\,.
\end{equation}
\end{enumerate}
By computing the cocharacteristic variety of the same equation \eqref{eq:det.1}, according to Definition \ref{def.cochar.var}, we obtain again \eqref{eq:cone.structure.det.1}.

\begin{remark}\label{rem:partial.Legendre}
The partial Legendre transformation  
$$
(x^i,u,u_i)\to (u_1,x^2,x^3,u-x^1u_1,-x^1,u_2,u_3)=\left(\tilde{x}^1,\tilde{x}^2,\tilde{x}^3,\tilde{u}_1,\tilde{u}_2,\tilde{u}_3\right)\, ,
$$
cf. \eqref{eq:legendre.partial}, 
transforms  equation \eqref{eq:det.1} into (up to a renaming of coordinates)
\begin{equation}\label{eq.u11.minore}
u_{11}+u_{22}u_{33}-u_{23}^2=0
\end{equation}
and the conic  variety \eqref{eq:cone.structure.det.1} into
\begin{equation}\label{eq:cone.structure.u11.minore}
z^1q_1-z^2q_2-z^3q_3=0\,.
\end{equation}
In fact, the conic variety \eqref{eq:cone.structure.u11.minore} is the quadric contact cone structure of   equation \eqref{eq.u11.minore}.
\end{remark}

\subsection{The quadric contact cone structure of the equation $u_{11}-u_{22}-u_{33}=0$}

Let us consider the wave equation
\begin{equation}\label{eq:wave.3d.bis}
\E:=\{u_{11}=u_{22}+u_{33}\}\,.
\end{equation}
As we did in Section \ref{sec:var.car.det.1},
we apply  the same scheme to the equation \eqref{eq:wave.3d.bis}.
\begin{enumerate}
\item Let us fix a point 
$$
\ff=\big(x^i_0,u_0,u_i^0,u_{22}^0+u_{33}^0,u_{12}^0,\dots,u_{33}^0\big)\in\E\,.
$$
\item Equation \eqref{eq:char.general.local} reads 
$
\eta_1^2=\eta_2^2+\eta_3^2\,,
$
so that the rank--1 vectors $\nu_{\ff}$ of the PDE \eqref{eq:wave.3d.bis} at point $\ff$,
in view of formula \eqref{eq:local.rank.1}, are described by the two $2$--parametric families:
$$
\nu^+_{\ff}(\eta_2,\eta_3)=(\eta_2^2+\eta_3^2) \partial_{u_{11}}\big|_{\ff} \pm  \sqrt{\eta_2^2+\eta_3^2}\,\eta_2   \partial_{u_{12}}\big|_{\ff} \pm \sqrt{\eta_2^2+\eta_3^2}\,\eta_3  \partial_{u_{13}}\big|_{\ff} +   \eta_2^2  \partial_{u_{22}}\big|_{\ff} +   \eta_2\eta_3  \partial_{u_{23}}\big|_{\ff} +  \eta_3^2 \partial_{u_{33}}\big|_{\ff}\, .
$$
Once again consider only the first family. 
Since  we are interested in the line spanned by $\nu^+_{\ff}(\eta_2,\eta_3)$, we set $\eta_2=t$, $\eta_3=1$, thus  obtaining 
$$
\ell^+_{\ff}(t)=\Span{\nu^+_{\ff}(t,1)}=\Span{(1+t^2) \partial_{u_{11}}\big|_{\ff} +  \sqrt{1+t^2}\,t   \partial_{u_{12}}\big|_{\ff} + \sqrt{1+t^2}  \partial_{u_{13}}\big|_{\ff} +  t^2 \partial_{u_{22}}\big|_{\ff} +   t  \partial_{u_{23}}\big|_{\ff} +  \partial_{u_{33}}\big|_{\ff}}\,.
$$
\item
With the line $\ell^+_{\ff}(t)$  is associated the hyperplane
\begin{equation*}
H(\ell^+_{\ff}(t))
=\Span{   -t D^{(2)}_1(\ff) + \sqrt{1+t^2}\,D^{(2)}_2(\ff) \,\,,\,\, D^{(2)}_2(\ff) -tD^{(2)}_3(\ff)  }\subset L_{\ff}\,.
\end{equation*}
In order to study the intersection of $H(\ell^+_{\ff}(t))$ with the plane $H(\ell^+_{\ff}(t+\epsilon))$ at the limit $\epsilon\to 0$, 
  we consider the system
\begin{equation}\label{eq:system.inf}
\left\{
\begin{array}{l}
\sqrt{1+t^2}\,\xi^1+t\xi^2+\xi^3=0\, ,
\\
\\
\sqrt{1+(t+\epsilon)^2}\,\xi^1+(t+\epsilon)\xi^2+\xi^3=0\, .
\end{array}
\right.
\end{equation}
By solving system \eqref{eq:system.inf} with respect to  $\xi^2=\xi^2(\xi^1,t,\epsilon)$, $\xi^3=\xi^3(\xi^1,t,\epsilon)$, and then by computing   $\lim_{\epsilon\to 0}\xi^2$ and $\lim_{\epsilon\to 0}\xi^3$, we obtain  
$$
\xi^2=-\frac{t\xi^1}{\sqrt{1+t^2}}\,,\quad \xi^3=-\frac{\xi^1}{\sqrt{1+t^2}}\,,
$$
i.e., the limit solution of \eqref{eq:system.inf} is
\begin{equation*}
v(\ell^{+}_{\ff}(t))=\Span{\sqrt{1+t^2}\,D^{(2)}_1(\ff) - tD^{(2)}_2(\ff) -D^{(2)}_3(\ff)}\subset L_{\ff}\,.
\end{equation*}
\item Had  we   considered the family $\nu^{-}_{\ff}(\eta_2,\eta_3)$, we would have come to the line $v(\ell^{-}_{\ff}(t))$. 
As in the step (4) of the previous Section \ref{sec:var.car.det.1},  the lines $v(\ell^{\pm}_{\ff}(t))$ 
describe the conic variety
\begin{equation}\label{eq:cone.structure.wave}
\V_{\f}:(z^1)^2-(z^2)^2-(z^3)^2=0\,.
\end{equation}
\end{enumerate} 

By computing the cocharacteristic variety of the same equation \eqref{eq:wave.3d.bis}, according to Definition \ref{def.cochar.var}, we obtain again \eqref{eq:cone.structure.wave}.  This is the last case, when the two objects coincide.

\subsection{The quadric contact cone structure of the equation $u_{12}=0$}\label{sec:cone.structure.u11}
The equation
 \begin{equation}\label{eq:Goursat.hyp}
\E:\{u_{12}=0\}\,, 
\end{equation}
considered in this section,  is degenerate in the sense that such is its symbol. Indeed, according to formula \eqref{eq:symbol.intro}, the symbol of \eqref{eq:Goursat.hyp} is equal to $\eta_1\eta_2$, that is a degenerate quadratic form. Let us compute the contact cone structure of such an equation by following the steps described in Section \ref{sec.andata}.
\begin{enumerate}
    \item  Let us fix a point 
    $\ff=\big(x^i_0,u_0,u_i^0,u^0_{11},0,u^0_{13},u^0_{22},u^0_{23},u^0_{33}\big)\in\E$.
    \item Equation \eqref{eq:char.general.local} reads   $\eta_1\eta_2=0$, which  gives either $\eta_1=0$ or $\eta_2=0$. Below we work out the case $\eta_2=0$ as the case $\eta_1=0$ can be treated in the same way. 
    We are going to use ``$+$'' to indicate the case when $\eta_2=0$ and ``$-$'' to indicate the case when $\eta_1=0$. The rank--one directions at $\ff$ that are tangent to $\mathcal{E}$ are 
    $$
    \eta_1^2\partial_{u_{11}}\big|_{\ff} + \eta_1\eta_3\partial_{u_{13}}\big|_{\ff} + \eta_3^2 \partial_{u_{33}}\big|_{\ff}\, ,
    $$
    so that, by letting  $t=\eta_3/\eta_1$, we have  
    $$
    \ell^+_{\ff}(t)=\partial_{u_{11}}\big|_{\ff} + t \partial_{u_{13}}\big|_{\ff} + t^2 \partial_{u_{33}}\big|_{\ff}\,.
    $$
    \item With the line $\ell^+_{\ff}(t)$   is associated the hyperplane 
    $$
    H(\ell^+_{\ff}(t))=\Span{ -tD_1^{(2)}(\ff)+D^{(2)}_3(\ff)\,,\, D^{(2)}_2(\ff) }\subset L_{\ff}\, ,
    $$
    i.e., all planes $H(\ell^+_{\ff}(t))$ contain the line $\Span{D^{(2)}_2(\ff)}$. Thus, the line $v(\ell^{+}_{\ff}(t))$ is independent of $t$:
    \begin{equation}\label{eq:line.J1.Goursat}
    v(\ell^{+}_{\ff}(t))=v(\ell^{+}_{\ff})=\Span{D^{(2)}_2(\ff)}\,.
    \end{equation}
    \item If we let vary the point $\ff$ on the fiber $J^2_{\f}$, the line \eqref{eq:line.J1.Goursat} describes a  $3$-dimensional subspace of the contact hyperplane $\CC_{\f}$:
    \begin{equation*}
    \mathcal{W}_{\f}:=\V^+_{\f}=\Span{D^{(1)}_2(\f)\,,\,\partial_{u_2}\big|_{\f}\,,\,\partial_{u_3}\big|_{\f}}\, .
    \end{equation*}
\end{enumerate}
Had we considered, in the above step $(2)$,  the case $\eta_1=0$,   we would have gotten the subspace $\mathcal{W}^\perp_{\f}$ of $\CC_{\f}$, that is  the symplectic orthogonal to $\mathcal{W}_{\f}$:
$$
\mathcal{W}^\perp_{\f}:=\V^-_{\f}=\Span{D^{(1)}_1(\f)\,,\,\partial_{u_1}\big|_{\f}\,,\,\partial_{u_3}\big|_{\f}}\, .
$$
We come to the conclusion  that the cone structure associated with PDE \eqref{eq:Goursat.hyp} is a pair of mutually symplectic--orthogonal distributions:
\begin{equation}\label{eq:cone.structure.u12}
\V:\f\in J^1\to\V_{\f}=\mathcal{W}_{\f}\cup\mathcal{W}^\perp_{\f}\,.
\end{equation}

Now the notion of a  cocharacteristic variety and that of a contact cone structure begin to diverge: indeed, the cocharacteristic variety of the equation  \eqref{eq:Goursat.hyp}, according to Definition \ref{def.cochar.var}, is the degenerate quadric hypersurface
\begin{equation*}
    \{ (z^3)^2=0\} = \{z^3=0\}\, ,
\end{equation*}
which turns out to be the linear span of  $\mathcal{W}_{\f}\cup\mathcal{W}^\perp_{\f}$.

\subsection{The quadric contact cone structure of the equation $u_{11}=0$}
The reasoning to get to the contact cone structure associated with the equation $u_{11}=0$ is the same as the one we employed in   Section \ref{sec:cone.structure.u11} and then we omit it: the result 
is the Lagrangian distribution
\begin{equation}\label{eq:cone.structure.u11}
\V:\f\in J^1\to\V_{\f}=\Span{D^{(1)}_1(\f)\,,\,\partial_{u_2}\big|_{\f}\,,\,\partial_{u_3}\big|_{\f}}\,.
\end{equation}
It is worth adding that, in this case  the cocharacteristic variety becomes the most degenerate as possible: indeed, the cocharacteristic variety of the equation  $u_{11}=0$, according to Definition \ref{def.cochar.var}, is the whole contact space $\CC_{\f}$.

\subsection{Relation between the contact cone structure and the cocharacteristic variety of a Monge--Amp\`ere equation}

All the examples worked out in the previous four subsections point towards the existence of a natural relationship between the proposed construction of a quadric cone structure associated with a Monge--Amp\`ere equation, cf. \eqref{eq.MAE},  and their cocharacteristic variety (see Definition \ref{def.cochar.var}): this is captured by the following theorem. 

\begin{theorem}\label{thMain1}
Let $\mathcal{E}$ be a Monge--Amp\`ere equation \eqref{eq.MAE} and let $\ff\in \mathcal{E}_{\f}$, cf. \eqref{eq:fibre.PDE}, be an its regular point, see also Remark \ref{rem.regular.points}. Then the following are true.
\begin{enumerate}
\item If the symbol of $\mathcal{E}$ is not degenerate at $\ff$, then the contact cone structure of $\mathcal{E}$ at $\f$ is the cocharacteristic variety of $\mathcal{E}$ at $\f$.
\item 
If the symbol of $\mathcal{E}$ has rank $2$ at $\ff$ and it is hyperbolic in this point, then its contact cone structure  at $\f$ is the union $D\cup D^\perp$ of two symplectic-orthogonal $3$-dimensional subspaces of $\CC_{\f}$ whereas the cocharacteristic variety of $\mathcal{E}$ at $\f$ describes the smallest linear subspace of $\CC_{\f}$ containing $D\cup D^\perp$.
\item If the symbol of $\mathcal{E}$ has rank $1$ at $\ff$, then 
the contact cone structure of $\E$ at $\f$ is a Lagrangian subspace of $\CC_{\f}$ and the cocharacteristic variety of $\mathcal{E}$ at  $\f$ is trivial.
\end{enumerate}
\end{theorem}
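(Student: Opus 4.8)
The plan is to reduce the general statement to the four explicit computations already carried out in Section \ref{secSezioneNuova}, by exploiting the symplectic invariance of both objects together with the classification of symplectic 3D Monge--Amp\`ere equations up to $\Sp(6)$--equivalence. First I would observe that both constructions are entirely fibrewise. Fix the base point $\f=\pi_{2,1}(\ff)\in J^1$. The coefficients $A,B_{ij},C^{ij},D$ of \eqref{eq.MAE} take definite values at $\f$ and, crucially, none of them depends on the second--order coordinates $u_{ij}$; hence the fibre $\E_{\f}$, the whole four--step construction of Section \ref{sec.andata}, and the KLR form \eqref{eq.JGG} depend only on these frozen values. Consequently the contact cone structure at $\f$ and the cocharacteristic variety at $\f$ coincide with those of the \emph{symplectic} Monge--Amp\`ere equation whose constant coefficients equal the values of $A,B_{ij},C^{ij},D$ at $\f$. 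This freezing reduces the theorem to the symplectic case, where $\Sp(\CC_{\f})$ acts and representation theory is available.

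Next I would invoke the two invariance statements already at hand: the rank of a rank--one line---and therefore the entire envelope construction producing $\V_{\f}$---is invariant under contactomorphisms (Section \ref{sec:rank}), while the KLR quadratic form \eqref{eq:mappa.KLR}, hence its zero locus, is symplectically invariant by construction. Thus both assignments $\Phi_{\f}\mapsto(\text{variety in }\CC_{\f})$ are $\Sp(\CC_{\f})$--equivariant, and it suffices to verify the three asserted relationships on one representative of each type. By the classification of symplectic 3D Monge--Amp\`ere equations up to $\Sp(6)$--equivalence (and signature over $\R$), proved in Section \ref{SecNormFormQadr} and revisited geometrically in Section \ref{secLinSec}, every such equation is equivalent to one of the four representatives of Section \ref{secSezioneNuova}: the non--degenerate symbol occurs for \eqref{eq:det.1} and \eqref{eq:wave.3d.bis}, the rank--$2$ hyperbolic symbol for \eqref{eq:Goursat.hyp}, and the rank--$1$ symbol for $u_{11}=0$. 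For these the required equalities are precisely the outputs computed there, namely \eqref{eq:cone.structure.det.1}, \eqref{eq:cone.structure.wave}, \eqref{eq:cone.structure.u12} and \eqref{eq:cone.structure.u11}: in the first two cases both objects equal the same smooth quadric; in the third the cone structure is $\mathcal{W}_{\f}\cup\mathcal{W}^{\perp}_{\f}$ while the KLR zero locus is its linear span $\{z^3=0\}$; in the last the cone structure is the Lagrangian plane \eqref{eq:cone.structure.u11} while the KLR form vanishes identically, giving all of $\CC_{\f}$.

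The main obstacle I anticipate is establishing the equivariance of the cone--structure construction with full rigour. Although the pointwise rank is manifestly a contact invariant, one must show that the passage through characteristic hyperplanes and the limiting step $v(\ell^{\pm}_{\ff}(t))=\lim_{\epsilon\to0}H(\ell^{\pm}_{\ff}(t))\cap H(\ell^{\pm}_{\ff}(t+\epsilon))$ commutes with the induced symplectomorphism $g_\ast$, uniformly as $\ff$ ranges over the fibre $\E_{\f}$. A secondary technical point is to check that the symbol rank is constant on the regular locus of each fibre of a representative---so that the hypothesis ``symbol non--degenerate (resp. rank $2$, rank $1$) at $\ff$'' genuinely selects the $\Sp(6)$--orbit rather than an accidental degeneration at the chosen point; this is verified directly on the normal forms, where for instance the symbol of \eqref{eq:det.1} stays non--degenerate because $\det\|u^{\sharp}_{ij}\|=(\det\|u_{ij}\|)^2\neq0$. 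Once these two verifications are in place the theorem follows, and this is exactly the content repackaged over $\C$ as Corollary \ref{corEquivalencyMomentCoChar}.
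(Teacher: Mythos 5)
Your proposal matches the paper's own argument: the authors likewise prove Theorem \ref{thMain1} by reducing, via $\Sp(\CC_{\f})$--equivariance of both constructions and the classification of symplectic 3D Monge--Amp\`ere equations into four equivalence classes, to the explicit computations of Section \ref{secSezioneNuova}, with Corollary \ref{corEquivalencyMomentCoChar} supplying the coarse (complex) version and signature adjustments completing the real case. The freezing of the coefficients at $\f$ and the equivariance checks you flag as obstacles are precisely the (largely implicit) ingredients the paper itself relies on.
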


In the Section \ref{secLinSec}  we reformulate  Theorem \ref{thMain1}  over the field of complex numbers: the so--obtained  Corollary \ref{corEquivalencyMomentCoChar} represents then a coarse proof of Theorem \ref{thMain1}; indeed, in the complex case, the four examples above exhaust all possible isomorphism types of Monge--Amp\`ere equations. A finer and as such complete proof can be easily obtained by modifying the signature in the given examples.

\section{Reconstructing a $2\Nd$ order PDE from a contact cone structure}\label{secBackRecepi}
Now we try to reverse the above recipe, i.e., starting from an arbitrary contact cone structure $\V$, we propose two different methods of associating a $2\Nd$ order PDE with $\V$: the reader will immediately recognize in the second a ``degenerate version" of the first. Since there is plenty of contactomorphism types of contact cone structures (even considering only the quadratic ones), in the face of only four contactomorphism types of (symplectic) Monge--Amp\`ere equations, a general ``inverse recipe" would necessarily exceed the class of PDE under consideration. This is why we propose below only two versions: they will be just enough to reconstruct all Monge--Amp\`ere equations.  


\subsection{The case of a $5$-dimensional contact cone structure}\label{sec:ritorno.non.degenere}

Let $\V$ be a contact cone structure on $J^1$ and 
let us assume that $\dim(\V_{\f})=5\,\,\forall\f\in J^1$. Starting from $\V_{\f}$, we will be constructing a distribution (not necessarily of constant rank) on $J^2_{\f}$ by working out  the following steps, 
that represent a sort of inverse procedure to the one described in Section \ref{sec.andata}.

\begin{enumerate}
\item Let us consider $\ff\equiv L_{\ff}\in J^2_{\f}$ and set  $\V_{\ff}:=L_{\ff}\cap \V_{\f}$: then, generically,  $\dim \V_{\ff}=2$.
\item If $\dim \V_{\ff}=2$, then to point $\ff$ we can associate the set
$$
H_{\ff}:=\{\text{Hyperplanes of $L_{\ff}$ tangent to $\V_{\ff}$ along its generatrices}\}\,.
$$
The set $H_{\ff}$ depends on one parameter, i.e., we have a 1--parametric family 
$H_{\ff}(t)$ 
of hyperplanes.
\item Let $\ell_{\ff}(t)$ be the line of rank $1$ corresponding to $H_{\ff}(t)$ via \eqref{eq:correspondence}.
\item Let $\mathcal{D}_{\ff}$ be the smallest linear subspace containing $\ell_{\ff}(t)$ $\forall\,t$. Then the correspondence $\ff\to\mathcal{D}_{\ff}$ defines
    a distribution $\mathcal{D}$ on $J^2_{\f}$: its integral submanifolds  will be submanifolds of $J^2_{\f}$, i.e., fibers of PDEs. 
\end{enumerate}

By starting from the contact cone structure associated with a symplectic Monge--Amp\`ere equation, the above procedure leads  to a foliation of  $J^2$ and then, each leaf of it will be a PDE.

\subsubsection{Foliation of PDEs associated with  the contact cone structure \eqref{eq:cone.structure.u11.minore}}\label{sec:ritorno.u11.minore}

At the end of Section \ref{sec:var.car.det.1} we have seen that equations $\det\|u_{ij}\|=1$ and $u_{11}+u_{22}u_{33}-u_{23}^2=0$ are contactomorphic; therefore, we can consider them, as well as their contact cone structures, as equivalent. 
In particular, in this section, 
we will be working with the contact cone structure \eqref{eq:cone.structure.u11.minore} because the computations are easier.\par

Let us apply the scheme explained above at the beginning of   Section \ref{sec:ritorno.non.degenere}, in order to construct the $2\Nd$ order PDEs associated with the contact cone structure $\V_{\f}$ given by \eqref{eq:cone.structure.u11.minore}. 
We employ the coordinates $(z^i,q_i)$ on $\CC_{\f}$  introduced in Notation \ref{notZetaQu}.

\begin{enumerate}
\item Let us fix $\ff=\big(x^i_0,u_0,u_i^0,u_{ij}^0\big)=\big(\f,u_{ij}^0\big)\in J^2_{\f}$, cf. \eqref{eq:theta.2.bello}, or, equivalently, $L_{\ff}=\langle D^{(1)}_i(\f)+u^0_{ij}\partial_{u_j}\big|_{\f} \rangle_{i=1,2,3}
=\Span{D_i^{(2)}(\ff)}_{i=1,2,3}$. Since $L_{\ff}$, as a vector subspace of $\CC_{\f}$, is locally described by $q_i-u_{ij}^0z^j=0$,
$$
\V_{\ff}=\{ q_i-u_{ij}^0z^j=0\,,\, z^1q_1-z^2q_2-z^3q_3=0  \}\,.
$$
\item Then we have
\begin{equation}\label{H.theta2.ritorno.u11.minore}
H_{\ff}=\left\{ \xi^i D_i^{(2)}(\ff)=\xi^i D^{(1)}_i(\f)+\xi^i u^0_{ij}\partial_{u_j}\big|_{\f} \right\}
\end{equation}
with $\xi^i$ satisfying
\begin{equation}\label{eq:system.ritorno.u11.minore}
\bar{q}_i-u_{ij}^0\bar{z}^j=0\,,\,\, \bar{z}^1\bar{q}_1-\bar{z}^2\bar{q}_2-\bar{z}^3\bar{q}_3=0\,,\,\, \bar{q}_1\xi^1 - \bar{q}_2\xi^2 -\bar{q}_3\xi^3 + \bar{z}^1\xi^i u^0_{i1} - \bar{z}^2\xi^i u^0_{i2} -\bar{z}^3\xi^i u^0_{i3}=0\,.
\end{equation}
By a direct computation, from the first four equations of the system \eqref{eq:system.ritorno.u11.minore} we obtain 
\begin{equation}\label{eq:z3.ritorno.u11.minore}
\bar{z}^3=\frac{-u^0_{23}\bar{z}^2\pm \sqrt{u^0_{11}u^0_{33}(\bar{z}^1)^2-u^0_{22}u^0_{33}(\bar{z}^2)^2 + (u^0_{23})^2(\bar{z}^2)^2} }{u^0_{33}}=: \frac{-u^0_{23}\bar{z}^2\pm \sqrt{A(\bar{z}^1,\bar{z}^2)} }{u^0_{33}}
\end{equation}
and, assuming  $u^0_{33}\neq 0$,  the last equation of the system \eqref{eq:system.ritorno.u11.minore} yields 
$$
u^0_{11}u^0_{33}\bar{z}^1\xi^1 - u^0_{22}u^0_{33}\bar{z}^2\xi^2 + (u^0_{23})^2\bar{z}^2\xi^2  \mp \sqrt{A(\bar{z}^1,\bar{z}^2)}\, u^0_{23}\xi^2\mp\sqrt{A(\bar{z}^1,\bar{z}^2)}\,u^0_{33}\xi^3=0\,.
$$
Let us consider the case with the plus sign in \eqref{eq:z3.ritorno.u11.minore}. By setting $\bar{z}^1=t$ and $\bar{z}^2=1$,   equation above  becomes
$$
u^0_{11}u^0_{33}t\,\xi^1 - u^0_{22}u^0_{33}\xi^2 + (u^0_{23})^2\xi^2  - \sqrt{A(t)}\, u^0_{23}\xi^2 -\sqrt{A(t)}\,u^0_{33}\xi^3=0\,,\quad A(t):=A(t,1)\, ,
$$
whose solution $(\xi^1(t),\xi^2(t),\xi^3(t))$, substituted in \eqref{H.theta2.ritorno.u11.minore}, gives $H_{\ff}(t)$ that we were looking for.

\item
By looking at \eqref{eq:local.rank.1}--\eqref{eq:H}, the rank--one line $\ell_{\ff}(t)$ corresponding to $H_{\ff}(t)$ is 
$$
\ell_{\ff}(t)=\Span{\sum_{i\leq j}\eta_i\eta_j\partial_{u_{ij}}\big|_{\ff}}\, ,
$$
where
$$
\eta_1=u^0_{11}u^0_{33}t\,, \quad \eta_2= -u^0_{22}u^0_{33} + (u^0_{23})^2  - \sqrt{A(t)}\, u^0_{23} \,, \quad \eta_3=-\sqrt{A(t)}\,u^0_{33}\, .
$$
\item The smallest linear subspace $\mathcal{D}_{\ff}$ containing $\ell_{\ff}(t)$ for any $t$ is $\mathcal{D}_{\ff}=\Span{X_1|_{\ff},X_2|_{\ff},X_3|_{\ff},X_4|_{\ff},X_5|_{\ff}}$ where the  vector fields $X_i$ on $J^2_{\f}$  are:
\begin{eqnarray*}
X_1&=&u_{11}u_{23}u_{33}\frac{\partial}{\partial u_{12}}+u_{11}u_{33}^2\frac{\partial}{\partial u_{13}}\, ,\\
X_2&=&-2u_{22}u_{23}u_{33}\frac{\partial}{\partial u_{22}}+2u_{23}^3\frac{\partial}{\partial u_{22}}-u_{22}u_{33}^2\frac{\partial}{\partial u_{23}}+u_{23}^2u_{33}\frac{\partial}{\partial u_{23}}\, ,\\
X_3&=&u_{11}^2u_{33}^2\frac{\partial}{\partial u_{11}}+u_{11}u_{23}^2u_{33}\frac{\partial}{\partial u_{22}}+u_{11}u_{23}u_{33}^2\frac{\partial}{\partial u_{23}}+u_{11}u_{33}^3\frac{\partial}{\partial u_{33}}\, ,\\
X_4&=&-u_{11}u_{22}u_{33}^2\frac{\partial}{\partial u_{12}}+u_{11}u_{23}^2u_{33}\frac{\partial}{\partial u_{12}}\, ,\\
X_5&=&u_{22}^2u_{33}^2\frac{\partial}{\partial u_{22}}-3u_{22}u_{23}^2u_{33}\frac{\partial}{\partial u_{22}}+2u_{23}^4\frac{\partial}{\partial u_{22}}-u_{22}u_{23}u_{33}^2\frac{\partial}{\partial u_{23}}
+u_{23}^3u_{33}\frac{\partial}{\partial u_{23}}
-u_{22}u_{33}^3\frac{\partial}{\partial u_{33}}
+u_{23}^2u_{33}^2\frac{\partial}{\partial u_{33}}\, .
\end{eqnarray*}
A direct computation shows that the vector  distribution $\ff\in J^2_{\f}\to\mathcal{D}_{\ff}\subseteq T_{\ff}J^2_{\f}$ is integrable, so it admits a $1$-parametric family of integral submanifolds locally given by $f=0$, where $f$ is the   general solution to the system $\{X_1(f)=X_2(f)=X_3(f)=X_4(f)=X_5(f)=0\}$, which is
$$
f=K_1u_{11}+K_2(u_{22}u_{33}-u_{23}^2)\,,\quad K_i\in\mathbb{R}\,,
$$
so that the PDEs we were looking for are given by  $K_1u_{11}+K_2(u_{22}u_{33}-u_{23}^2)=0$. 

\end{enumerate}

\subsubsection{Foliation of PDEs associated with  the contact cone structure \eqref{eq:cone.structure.wave}}
We consider now 
the cone structure  \eqref{eq:cone.structure.wave} and we perform the same steps as 
in Section \ref{sec:ritorno.u11.minore} above. We report only the final result of the computations, i.e., the distribution $\mathcal{D}$ on $J^2_{\f}$ constructed starting from \eqref{eq:cone.structure.wave}:
$$
\mathcal{D}=\langle  \partial_{u_{11}} + \partial_{u_{22}}\,,\, \partial_{u_{11}} + \partial_{u_{33}}\,,\, \partial_{u_{12}}\,,\,\partial_{u_{13}} \,,\, \partial_{u_{23}}   \rangle\, .
$$
The integral manifolds of $\mathcal{D}$ are described by
$$
u_{11}-u_{22}-u_{33}=K\,,\quad K\in\mathbb{R}\,.
$$

\subsection{The case of a degenerate  $3$-dimensional contact cone structure}\label{sec:ritorno.degenere}

In step (3) of the recipe at the beginning of Section \ref{sec:ritorno.non.degenere} we have seen  that the hyperplanes $H_{\ff}(t)$ are tangent to $\V_{\ff}$ along its generatrices. If we do not assume particular properties of $\V$, one can have a unique generatrix of $\V_{\ff}$. This happens, for instance, in the case when  $\V$ is a $3$-dimensional vector distribution on $J^1$, i.e., a particular degenerate contact cone structure on $J^1$, that we study in details below. We 
give here a similar scheme to the one given in Section \ref{sec:ritorno.non.degenere}, that allows us   to define a (non-constant rank) distribution $\mathcal{D}$ on $J^2_{\f}$ starting from $\V_{\f}$.


\begin{enumerate}
\item Let us consider $\ff\equiv L_{\ff}\in J^2_{\f}$ and let  $\V_{\ff}:=L_{\ff}\cap \V_{\f}$. 
%
%
\item 
To the point $\ff$ we associate the set
$$
H_{\ff}:=\{\text{Hyperplanes of $L_{\ff}$ containing $\V_{\ff}$}\}\,.
$$
\item To each element $h\in H_{\ff}$ we associate the rank--one line $\ell_{\ff}(h)$ via \eqref{eq:correspondence}.
\item Let $\mathcal{D}_{\ff}$ be the smallest linear subspace containing $\ell_{\ff}(h)$ $\forall\,h\in H_{\ff}$. Then the correspondence $\ff\to\mathcal{D}_{\ff}$ defines
    a (non-constant rank) distribution $\mathcal{D}$ on $J^2_{\f}$: its integral submanifolds will be submanifolds of $J^2_{\f}$, i.e., fibers of PDEs.
\end{enumerate}

\subsubsection{PDEs associated with  the contact cone structure \eqref{eq:cone.structure.u12}}\label{sec.rec.u12}

We work out the above steps  
in the case of the contact cone structure \eqref{eq:cone.structure.u12}. A key remark  is that $L_{\ff}$ intersects $\mathcal{W}_{\f}$ if and only if it intersects also $\mathcal{W}^\perp_{\f}$: in particular, it is enough to   study the intersection $\V^+_{\ff}=L_{\ff}\cap\mathcal{W}_{\f}$, since for $\V^-_{\ff}=L_{\ff}\cap\mathcal{W}^\perp_{\f}$ the reasonings are  the same.
\begin{enumerate}
    \item 
Let us fix $\ff=\big(x^i_0,u_0,u_i^0,u_{ij}^0\big)=\big(\f,u_{ij}^0\big)\in J^2_{\f}$, cf.  \eqref{eq:theta.2.bello}, or, equivalently, $L_{\ff}=\langle D^{(1)}_i(\f)+u^0_{ij}\partial_{u_j}\big|_{\f} \rangle_{i=1,2,3}
=\Span{D_i^{(2)}(\ff)}_{i=1,2,3}$. The dimension of $\V^+_{\ff}=L_{\ff}\cap\mathcal{W}_{\f}$ is the corank of the matrix
\begin{equation}\label{eq:matrix.u12}
    \left(
    \begin{array}{cccccc}
         1 & 0 & 0 & u^0_{11} & u^0_{12} & u^0_{13} \\
         0 & 1 & 0 & u^0_{12} & u^0_{22} & u^0_{23} \\
         0 & 0 & 1 & u^0_{13} & u^0_{23} & u^0_{33} \\
         0 & 1 & 0 & 0 & 0 & 0 \\
         0 & 0 & 0 & 0 & 1 & 0 \\
         0 & 0 & 0 & 0 & 0 & 1 \\
    \end{array}
    \right)\, .
\end{equation}
The rank of   matrix \eqref{eq:matrix.u12} is $6$ if and only if $u^0_{12}\neq 0$ and it is $5$ otherwise. No other cases can occur.
\item If the rank of matrix \eqref{eq:matrix.u12} is $6$, then $\dim(L_{\ff}\cap\mathcal{W}_{\f})=0=\dim(L_{\ff}\cap\mathcal{W}^\perp_{\f})$ and $H_{\ff}$ consists of all the hyperplanes of $L_{\ff}$. If the rank of matrix \eqref{eq:matrix.u12} is $5$,  then $\dim(L_{\ff}\cap\mathcal{W}_{\f})=1$ and $H^+_{\ff}$ consists of the hyperplanes of $L_{\ff}$ containing the line $\V^+_{\ff}=L_{\ff}\cap\mathcal{W}_{\f}$.
\item In the case $\dim(L_{\ff}\cap\mathcal{W}_{\f})=0$ ($=\dim(L_{\ff}\cap\mathcal{W}^\perp_{\f})$), the set $\{\ell_{\ff}(h)\}$ consists of all the rank--one lines of $J^2_{\f}$. 

\noindent
In the case $\dim(L_{\ff}\cap\mathcal{W}_{\f})=1$ we have that $\V^+_{\ff}=\Span{D_2^{(2)}(\ff)}=\Span{D_2^{(1)}(\ff)+u^0_{22}\partial_{u_2}\big|_{\ff}  +u^0_{23}\partial_{u_3}\big|_{\ff}   }$. In view of the previous point, we have that $H^+_{\ff}=\{H^+_{\ff}(t)\}$, where 
$$
 H^+_{\ff}(t)=\Span{-tD_1^{(2)}(\ff)+D^{(2)}_3(\ff)\,,\,D_2^{(2)}(\ff) }\, ,
$$
so that the set $\{\ell_{\ff}(h)\}$ equals $\{ \ell^+_{\ff}(t)\mid t\in\mathbb{R}\}$, where 
\begin{equation}\label{eq:appoggio+}
\ell^+_{\ff}(t)=\Span{\partial_{u_{11}}\big|_{\ff} + t \partial_{u_{13}}\big|_{\ff} + t^2 \partial_{u_{33}}\big|_{\ff} }\, ,
\end{equation}
cf.   \eqref{eq:H}.  Had we considered the case $\dim(L_{\ff}\cap\mathcal{W}^\perp_{\f})=1$, we would have gotten 
\begin{equation}\label{eq:appoggio-}
\ell^-_{\ff}(t)=\Span{\partial_{u_{22}}\big|_{\ff} + t \partial_{u_{23}}\big|_{\ff} + t^2 \partial_{u_{33}}\big|_{\ff} }\,.
\end{equation}

\item Taking into account that the smallest linear space containing \eqref{eq:appoggio+} and \eqref{eq:appoggio-}, for any $t\in\mathbb{R}$ is 
$$
\Span{\partial_{u_{11}}|_{\ff},\partial_{u_{13}}|_{\ff},\partial_{u_{22}}|_{\ff}, 
\partial_{u_{23}}|_{\ff},\partial_{u_{33}}|_{\ff}}\,,
$$
combining above points $(1)-(3)$, the distribution $\mathcal{D}$ on $J^2_{\f}$ turns out to be
\begin{equation*}
    \mathcal{D}=\Span{\partial_{u_{11}}\,,\, u_{12}\partial_{u_{12}}\,,\, \partial_{u_{13}}\,,\, \partial_{u_{22}}\,,\, \partial_{u_{23}}\,,\, \partial_{u_{33}} }\, .
\end{equation*}
The only $5$--dimensional integral submanifold of $\mathcal{D}$ is described by $u_{12}=0$.
\end{enumerate}

\subsubsection{PDEs associated with the contact cone structure \eqref{eq:cone.structure.u11}}

In the case of \eqref{eq:cone.structure.u11}, the computations to get to the distribution $\mathcal{D}$ of step (4) of the scheme given at the beginning of Section \ref{sec:ritorno.degenere} closely follow those of Section \ref{sec.rec.u12}, so we omit them. The result is the  distribution 
\begin{equation*}
    \mathcal{D}=\Span{u_{11}\partial_{u_{11}}\,,\, u_{11}\partial_{u_{12}}\,,\, u_{11}\partial_{u_{13}}\,,\, \partial_{u_{22}}\,,\, \partial_{u_{23}}\,,\, \partial_{u_{33}} }\, .
\end{equation*}
Note that $\dim{D_{\ff}}=6$ if $u_{11}\neq 0$ and that the only $5$--dimensional integral submanifold of $\mathcal{D}$ is described by $u_{11}=0$.

\section{The space $\p\Lambda_0^3(\CC)$ of symplectic 3D Monge--Amp\`ere equations}\label{secMAESpace}
\begin{warning}
From now on we will be working over the field of complex numbers; we retain the symbol $\CC$ for a 6--dimensional (complex) linear symplectic space but we no longer make a distinction between ``total derivatives" and ``vertical vectors", see \eqref{eq:piano.contatto}: we will have a generic bi--Lagrangian splitting of $\CC$ instead.
\end{warning}
By regarding $\CC$ as the contact plane $\CC_{\f}$ at a generic point $\f\in J^1$, and by replacing $\C$ with $\R$, the reader will immediately see how the constructions obtained below mirror analogous results in the real--differentiable  case of symplectic Monge--Amp\`ere equations and contact cone structures; with only one major caveat: structures that are equivalent over $\C$ need not to be equivalent over $\R$. 

\subsection{The symplectic space $\CC$}\label{subCCstar}


We define $\CC$ by fixing a subspace $V\subset\CC$, such that 
\begin{equation*}
    \CC:=V\oplus V^*\, ,
\end{equation*}
and the symplectic form $\omega$ corresponds to $(0,\id_V,0)$ in the splitting
\begin{equation*}
    \Lambda^2(V\oplus V^*)=\Lambda^2(V)\oplus\End(V)\oplus\Lambda^2(   V^*)\, .
\end{equation*}
A choice of a basis of $V$, and its dual in $V^*$, for instance,
 \begin{equation}\label{eqBasiViViDuale}
V=\Span{e_1,e_2,e_3}\, ,\quad V^*=\Span{\epsilon^1,\epsilon^2,\epsilon^3}\, ,\quad  \epsilon^i(e_j)=\delta_j^i\, ,
\end{equation}
leads to the basis 
\begin{equation}\label{eqCoordinatesOnC}
e_1,\, e_2,\, e_3,\, e_4:=\epsilon^1,\, e_5:=\epsilon^2,\, e_6:=\epsilon^3\, 
\end{equation}
of $\CC$, such that   the symplectic form $\omega$ looks like
\begin{equation*}
\omega=\epsilon^i\wedge e_i\in \Lambda^2(\CC^*)\, .
\end{equation*}
At risk of sounding redundant, we set
\begin{equation}\label{eqCoordinatesXonC}
 x^1:=\epsilon^1,\, x^2:=\epsilon^2,\, x^3:=\epsilon^3\, ,x^4:=e_1\, ,x^5:=e_2\, ,x^6:=e_3\, 
\end{equation}
and we regard these $x^i$'s as linear functions on $\CC$, 
that is, as basis elements of $\CC^*$; the usefulness of such a choice will become clearer in the sequel. Observe that, by construction,  $x^j(e_i)=\delta_i^j$ and 
\begin{equation}\label{eq:symplectic.1}
\omega=x^1\wedge x^4+x^2\wedge x^5+x^3\wedge x^6\,.
\end{equation}
The isomorphism $\CC\simeq\CC^*$ given by \eqref{eq:symplectic.1}, acts on the basis elements $e_1,\ldots, e_6$ of $\CC$ as follows:
\begin{equation}\label{eq:C.C.Star}
e_1\to x^4\,,\,\, e_2\to x^5\,,\,\,e_3\to x^6\,,\,\, e_4\to -x^1\,,\,\, e_5\to -x^2\,, \,\,e_6\to -x^3\,.
\end{equation}
\begin{remark}\label{remDictionary}
If we regard $\CC$ as $\CC_{\f}$, then $e_i\leftrightarrow \left.D_i^{(1)}\right|_{\f}$ and $\epsilon^i\leftrightarrow\left.\partial_{u_i}\right|_{\f}$. Therefore, each $x^i$ of \eqref{eqCoordinatesXonC} corresponds precisely to  $\left. d y^i\right|_{\CC_{\f}}$ that appears in \eqref{eq:Phi.per.MAE}, $i=1,\ldots,6$: it follows that $\epsilon^i=z^i$ and $e_i=q_i$, $i=1,2,3$, where $z^i$ and $q_i$ are as in Notation \ref{notZetaQu}. 
\end{remark}
\begin{remark}\label{rem:legendre.and.partial.point}
The assignment 
\begin{equation}\label{eq:legendre.in.un.punto}
(x^1,x^2,x^3,x^4,x^5,x^6)\to (x^4,x^5,x^6,-x^1,-x^2,-x^3)
\end{equation}
defines a transformation of $\CC$ that preserves the symplectic form \eqref{eq:symplectic.1}. By borrowing the terminology from Example~\ref{ex:Legendre}, we call \eqref{eq:legendre.in.un.punto} a \emph{total Legendre transformation}: indeed, \emph{partial} Legendre transformations can be defined as well,  along the lines of   \eqref{eq:legendre.partial}. 
\end{remark}

\subsection{The moment map identifying   $\sp(\CC)$ with $S^2(\CC^*)$}\label{subsIdentSpQuadraticForms}
The Lie group $\Sp(\CC)$ of symplectomorphisms of $\CC$ is defined as usual:
\begin{equation*}
    \Sp(\CC):=\{g\in\GL(\CC)\mid g^*(\omega)=\omega\}\, .
\end{equation*}
For any $X\in\gl(\CC)=\End(\CC)$ the following contraction of $X$ with $\omega$, namely 
\begin{equation}\label{eqFormaQuIcs}
    Q_X(a,b):=\omega(X(a),b)\, ,\quad \forall a,b\in\CC\, ,
\end{equation}
defines a quadratic form $Q_X$ on $\CC$; this allows for a transparent description of the Lie algebra
\begin{equation*}
    \sp(\CC):=\{X\in\gl(\CC) \mid Q_X\ \textrm{is symmetric}\}\, 
\end{equation*}
of the group $\Sp(\CC)$.\par 
By regarding $\gl(\CC)=\CC^*\otimes \CC$ as the linear part of the (graded) algebra $\X(\CC)$ of polynomial vector fields on the (linear) symplectic manifold $\CC$, the natural  embedding
\begin{equation}\label{eqEmbedJay}
    j:\sp(\CC)\longrightarrow\X(\CC)
\end{equation}
realizes an element $X\in \sp(\CC)$ as a (linear) vector field $j(X)$ on $\CC$; the fact that $X\in \sp(\CC)$ translates into $j(X)$ being a symplectic,  even Hamiltonian,  vector field. It makes then sense to consider the associated \emph{moment map}, that is the $\Sp(\CC)$--equivariant map
\begin{equation*}
    \mu:\CC\longrightarrow \sp(\CC)^*
\end{equation*}
unambiguously defined by
\begin{equation}\label{eqDefMomMapBanal}
    d\langle \mu, X\rangle = j(X)\ \ins\  \omega \, ,\quad\forall X\in\sp(\CC)\,  .
\end{equation}
It follows immediately that
\begin{eqnarray*}
\mu(a): \sp(\CC) &\longrightarrow & \C\, ,\\
X &\longmapsto & Q_X(a,a)\, ,
\end{eqnarray*}
for any $a\in\CC$. Formula \eqref{eqDefMomMapBanal} tells precisely that the \emph{linear} map $j$ associating with any element of $\sp(\CC)$ its Hamiltonian vector field $j(X)$ arises as the differential of the \emph{quadratic} map $\mu$; as such, the latter factors  through the Veronese embedding $v_2$, i.e., diagram
\begin{equation}\label{eqIdS2CSpC}
    \xymatrix{
    S^2(\CC)\ar[r]^{\phi^*} & \sp(\CC)^*\\
    \CC\ar[ur]_\mu\ar[u]^{v_2} &
    }
\end{equation}
commutes. 
We labeled the upper arrow by $\phi^*$, because it is precisely the dual isomorphism to
\begin{eqnarray}
\phi: \sp(\CC) &\longrightarrow & S^2(\CC^*)\, ,\label{eqIdentPhi}\\
X &\longmapsto & Q_X \, .\nonumber
\end{eqnarray}
This is just another way to prove that $\sp(\CC)$ is naturally identified with  $S^2(\CC^*)$: to obtain this well--known identification, we employed  the moment map of the natural $\Sp(\CC)$--action on $\CC$, whose quadratic character is responsible for the appearance of the second symmetric power of $\CC^*$; 
we stressed this elementary phenomenon here, because it will reappear later on in Section \ref{secMomentMap} when we will be performing an analogous construction on the space of Monge--Amp\`ere equations.
\subsubsection{The identification in matrix form}
In view of the obvious decomposition
\begin{equation*}
    \gl(\CC)=\gl(V)\oplus V^{\otimes 2}\oplus V^{\ast\otimes 2}\oplus \gl(V^*)\, ,
\end{equation*}
any element $X\in \gl(\CC)$ can be presented, by employing bases \eqref{eqBasiViViDuale},  as
\begin{equation*}
    X=S_i^j\epsilon^i\otimes e_j+R^{ij}e_i\otimes e_j+T_{ij}\epsilon^i\otimes\epsilon^j+U_j^ie_i\otimes\epsilon^j\, ,
\end{equation*}
that is
\begin{equation*}
    X=\left( \begin{array}{cc}
        S & R \\
        T & U
    \end{array} \right)\, .
\end{equation*}
Easy computations shows that
\begin{equation*}
    Q_{\epsilon^i\otimes e_j}=e_j\otimes\epsilon^i\, ,\quad
      Q_{e_i\otimes e_j}=-e_j\otimes e_i\, ,\quad 
     Q_{\epsilon^i\otimes \epsilon^j}=\epsilon^j\otimes\epsilon^i\, ,\quad
       Q_{e_i\otimes \epsilon^j}=-\epsilon^j\otimes e_i\, ,
\end{equation*}
cf. \eqref{eqFormaQuIcs}, whence
\begin{equation}\label{eqQXNotSymmetric}
    Q_X=S_i^je_j\otimes\epsilon^i-R^{ij}e_j\otimes e_i+T_{ij}\epsilon^j\otimes\epsilon^i-U_j^i\epsilon^j\otimes e_i\, 
\end{equation}
is symmetric if and only if both $R$ and $T$ are symmetric and, moreover, $U=-S^t$; these are the conditions that  single out the     21--dimensional Lie sub--algebra 
\begin{equation*}
    \sp(\CC)=\left\{  \left( \begin{array}{cc}
        S & R \\
        T & -S^t
    \end{array} \right)\mid R=R^t\,, T=T^t   \right\} \, 
\end{equation*}
of $\gl(\CC)$.\par 
Therefore, if $X\in\sp(\CC)$, then \eqref{eqQXNotSymmetric} reads
\begin{equation*}
    Q_X= T_{ij}\epsilon^i\epsilon^j+S_i^j\epsilon^i e_j-R^{ij}e_i  e_j \in S^2(\CC^*)
\end{equation*}
and  \eqref{eqIdentPhi} reads
\begin{equation*}
     \phi:\left( \begin{array}{cc}
        S & R \\
        T & -S^t
    \end{array} \right)\longmapsto  \left( \begin{array}{cc}
        T & S \\
        S^t & -R
    \end{array} \right)\, .
\end{equation*}
It is easy to see that $\phi$ is a $\Sp(\CC)$--module isomorphism: indeed, for all $g\in\Sp(\CC)$ we have
\begin{equation*}
    g^*(Q_X)(a,b)=Q_X(g\cdot a,g\cdot b))=\omega(X(g\cdot a),g\cdot b))=\omega(g\cdot X( a),g\cdot b))=g^*(\omega)(X(a),b)=\omega(X(a),b)=Q_X(a,b)\, .
\end{equation*}
\subsection{The space $\Lambda^3(\CC)$ and its subspace $\Lambda_0^3(\CC)$}\label{subsPluckerSpace}
It is now convenient to introduce the notation
\begin{equation*}
    e_{i_1i_2\cdots i_k} \coloneq e_{i_1} \wedge \cdots \wedge e_{i_k}\, ,\quad\forall i_1,\ldots,i_k=1,2,\ldots,6\, ,\quad k=2,3,\ldots, 6\, ,
\end{equation*}
with the obvious identifications
\begin{equation*}
    e_{i_1i_2\cdots i_k}=\sign(\sigma)e_{\sigma(i_1)\sigma(i_2)\cdots \sigma(i_k)}\, .
\end{equation*}
By defining the dual symbols in an analogous way, i.e.,
\begin{equation*}
    x^{i_1i_2\cdots i_k}=x^{i_1}\wedge\cdots\wedge x^{i_k}\, ,
\end{equation*}
we see that, for example,   the symplectic form \eqref{eq:symplectic.1}
\begin{equation*}
\omega=x^{14}+x^{25}+x^{36}\,.
\end{equation*}
We warn the reader that the justapoxition of symbols, e.g., $e_{123}e_{456}$  denotes the \emph{symmetric} product and not the anti--symmetric one, i.e., $e_{123456}$.\par
Modulo these identifications, we have exactly 20 symbols, that correspond to as many generators of the 20--dimensional space $\Lambda^3(\CC)$;  the latter is equipped with a naturally defined $\Lambda^6(\CC)$--valued symplectic form $\Omega$:
\begin{eqnarray}
    \Omega: \Lambda^3(\CC)\times\Lambda^3(\CC) & \longrightarrow & \Lambda^6(\CC)\, ,\label{eqOmegaBig}\\
    (\alpha,\beta)&\longrightarrow & \alpha\wedge\beta\, .\nonumber
\end{eqnarray}
Skipping the twisting factor $e_{123456}$, the symplectic form $\Omega$ looks like  
\begin{equation*}
    \Omega = x^{123}\wedge x^{456}-\begin{pmatrix}
x^{423} & x^{143} & x^{124} \\
x^{523} & x^{153} & x^{125} \\
x^{623} & x^{163} & x^{126}
\end{pmatrix}\wedge \begin{pmatrix}
x^{156} & x^{256} & x^{356} \\
x^{416} & x^{426} & x^{436} \\
x^{451} & x^{452} & x^{453}
\end{pmatrix}\in \Lambda^2(\Lambda^3(\CC^*))\, ,
\end{equation*}
having understood (by borrowing the notiation from  \cite[Section 2.2]{Iliev2005}) the wedge product of the matrices above in the following way: 
\begin{equation*}
    \|a^{ij}\|\wedge\|b^{ij}\|:=\sum_{i,j}a^{ij}\wedge b^{ij}\, .
\end{equation*}
By linearity with respect to the wedge product, the natural $\Sp(\CC)$--action extends  to the whole exterior algebra $\Lambda^\bullet(\CC)$: the resulting $\Sp(\CC)$--module $\Lambda^3(\CC)$ is not, however, irreducible since it contains  the space of 3--forms that are multiple of   $\omega$, which is   a copy of the 6--dimensional fundamental representation. The remaining 14--dimensional  constituent, henceforth denoted by $\Lambda_0^3(\CC)$, is irreducible and  can be described as follows.\par
Let $i_\omega:\Lambda^{3}(\CC)\longrightarrow \CC$ denote the insertion of $\omega$ and $m_{\omega^{-1}}$ the right multiplication by $\omega^{-1}
\in\Lambda^2(\CC)$, i.e., the embedding
\begin{eqnarray*}
    m_{\omega^{-1}}:\CC& \longrightarrow & \Lambda^3(\CC)\, ,\\
    e & \longmapsto & e\wedge\omega^{-1}\, .
\end{eqnarray*}
Since $i_{\omega}(\omega^{-1})=1$, we have a commutative diagram: 
\begin{equation*}
\xymatrix{
\CC\ar@{=}[dr]\ar[r]^{m_{\omega^{-1}}}  & \Lambda^3(\CC)\ar[d]^{i_\omega}\\
& \CC
}
\end{equation*}
From now on, our main concern will be the 14--dimensional space of 3--forms
\begin{equation*}
    \Lambda_0^3(\CC):=\ker i_\omega\, ,
\end{equation*}
which, in the real--differentiable setting and up to the natural identification $\Lambda^3(\CC)\simeq\Lambda^3(\CC^*)$ via $\omega$,  is  the space of ``effective 3--forms'' mentioned  in Section \ref{sub3DSMAE} above.\par 
Using the above--defined   coordinates   $(x^{123},  X, Y, x^{456})$ on $\Lambda^3(\CC)$, where
\begin{equation}\label{eqDefCoordXY}
X = \begin{pmatrix}
x^{423} & x^{143} & x^{124} \\
x^{523} & x^{153} & x^{125} \\
x^{623} & x^{163} & x^{126}
\end{pmatrix}, \,\,
Y =  \begin{pmatrix}
x^{156} & x^{416} & x^{451} \\
x^{256} & x^{426} & x^{452} \\
x^{356} & x^{436} & x^{453}
\end{pmatrix},
\end{equation}
we see that  an element of $\Lambda^3(\CC)$ belongs to $\Lambda_0^3(\CC)$ if and only if equalities  $X = X^t$ and $Y = Y^t$ hold on that element; in a similar way, the symplectic form $\Omega$ descends to $\Lambda_0^3(\CC)$ (therefore, we keep using the same symbol).
\begin{definition}\label{defSpaceMAE}
 The projectivization $\p(\Lambda_0^3(\CC^*))$ of the 14--dimensional irreducible representation $\Lambda_0^3(\CC^*)$ of $\Sp(\CC)$  is the \emph{space parametrizing 3D symplectic Monge--Amp\`ere equations}.
\end{definition}

\begin{remark}\label{remParamSpaceMAE}
This sudden switch from $\CC$ to $\CC^*$ in  Definition \ref{defSpaceMAE} will simplify matching it with the previously given one, as we shall see below; from a representation--theoretical standpoint, however, there is no difference, since the $\Sp(\CC)$--modules $\Lambda_0^3(\CC)$ and $\Lambda_0^3(\CC^*)$ are isomorphic: a distinguished isomorphism, which in coordinates is given by  
\eqref{eq:C.C.Star}, descends from the symplectic form \eqref{eqOmegaBig}. In other words, both the spaces $\p(\Lambda_0^3(\CC))$ and $\p(\Lambda_0^3(\CC^*))$ can be taken as the parametrizing space of 3D symplectic Monge--Amp\`ere equations.
\end{remark}

To see how above Definition \ref{defSpaceMAE} matches with the definition given above of a Monge--Amp\`ere equation (see Definition \ref{defMAEq}) it is enough to take an element
\begin{equation*}
   [\eta] \in \p(\Lambda_0^3(\CC^*))
\end{equation*}
and associate with it the \emph{hyperplane section}
\begin{equation}\label{eqEeta}
    \E_\eta:=\p(\ker\eta)\cap\LL(3,\CC)\, .
\end{equation}
Then it is enough to recall that $\LL(3,\CC)$ is identified with $J^2_{\f}$ and that a symplectic $2\Nd$ order PDE is a trivial bundle over $J^1$, see Section \ref{sub2ordPDEhypSurf}; to see how $\E_\eta$ looks like in coordinates, let us write down $\eta$ as a linear combination
\begin{equation}\label{eqFormulaEtaConMatrici}
    \eta=\eta_{123}x^{123}+\tr(B_\eta X)+\tr(C_\eta Y)+ \eta_{456}x^{456}\, ,
\end{equation}
where $B_\eta$ and  $C_\eta$ are  $3\times 3$ matrices, cf. also \eqref{eq.MAE}: then, 
$\E_\eta=\{F_\eta=0\}$, with
\begin{equation}\label{eqFormulaEffeEtaConMatrici}
    F_\eta(u_{ij})=\eta_{123}+\tr(B_\eta U)+\tr(C_\eta U^\sharp)+\eta_{456}\det U\, ,
\end{equation}
and $U=\|u_{ij}\|$. Observe that $F_\eta$ depends upon $\eta$, whereas $\E_\eta$ only upon $[\eta]$.\par

\smallskip
\noindent
Below we give an example that will be useful later on.

\begin{example}\label{ex:gianni.co.co}
The element 
$$
e_{423}=e_4\wedge e_2\wedge e_3\in \Lambda_0^3(\CC)\, 
$$ 
can be regarded, via  
\eqref{eq:C.C.Star}, as the 3--form
$$
\eta=-x^{156}=-x^1\wedge x^5\wedge x^6\in \Lambda_0^3(\CC^*)\,
$$
on $\CC$, see also Remark \ref{remParamSpaceMAE}:  in the coordinate representation \eqref{eqFormulaEtaConMatrici}    all the coefficients of $\eta$   are  equal to zero, save for the $(1,1)$--entry of the matrix $C_\eta$, which is equal to $-1$: therefore, dropping the negligible sign, formula \eqref{eqFormulaEffeEtaConMatrici} leads to  the   Monge-Amp\`ere equation
\begin{equation}\label{eq:MAE.minore.1.1}
F_{\eta}(u_{ij})=u_{11}^\sharp=u_{22}u_{33}-u_{23}^2=0\, .
\end{equation}
It is worth stressing that transformation \eqref{eq:legendre.in.un.punto} sends $-x^{156}$ into $x^{423}$ and that  
the Monge-Amp\`ere equation associated with $\eta=x^{423}$ is
$$
F_{\eta}(u_{ij})=u_{11}=0\, ,
$$
which turns out to be equivalent to Monge-Amp\`ere equation \eqref{eq:MAE.minore.1.1}.
\end{example}
Having interpreted symplectic Monge--Amp\`ere equations as hyperplane sections of the Lagrangian Grassmannian $\LL(3,\CC)$, it is natural to expect that the cocharacteristic variety (see above Definition \ref{def.cochar.var}) be a geometric feature of the hyperplane section itself that can be computed by means of algebraic manipulations on $\eta$: this will be shown in the last Section \ref{secLinSec}, after a two--sections iatus. In the next Section \ref{SecNormFormQadr} we find a list of normal forms of quadratic forms on $\CC$ with respect to $\Sp(\CC)$ and then, in Section \ref{secMomentMap}, we show which of these forms come, via the Hitchin moment map, from the four isomorphism classes of Monge--Amp\`ere equations. At the very end of  Section \ref{secMomentMap} it is shown that the KLR contraction map, see \eqref{eq.JGG}, is equivalent to the Hitchin moment map (Theorem \ref{thFirstOriginalResult}).  

\section{Normal forms of quadratic forms on $\CC$ with respect to $\Sp(\CC)$}\label{SecNormFormQadr}

In this section we work out the classification of all $\Sp(\CC)$--orbits in $S^2(\CC^*)$. The main result, i.e., a list of normal forms, given in the basis \eqref{eqCoordinatesOnC} of $\CC$, can immediately be seen in Section \ref{subsCompClass} below: as the table shows,  there are three qualitative different types of quadratic forms, which we called \emph{nilpotent} (discussed in Section \ref{subsNilpotent}), \emph{semisimple} (see Section \ref{subsSemisimple}) and \emph{mixed} (see Section \ref{subsMixed}); basic facts about the root structure of $\Sp(6)$ are collected in Section \ref{subsrepTheoPrelims}

\subsection{The complete classification}\label{subsCompClass}
The representatives of all nonzero $\Sp(\CC)$--orbits in $S^2(\CC^*)$ are listed below: they are all non--equivalent, up to a sign change of the coefficients $(\lambda,\mu,\nu)\in\C^3$ and, if possible, a their permutation.\par

\begin{equation*}
    \begin{array}{l|l||l|l}
    \textrm{representative} & \textrm{coordinate expression} & \textrm{orbit type}& \textrm{orbit dimension}\\
    \hline
       q_{[6]}  & \epsilon^1e_2+\epsilon^2e_3+e_3^2 &\textrm{nilpotent}& 18  \\
       
               q^{(111)} & \lambda \epsilon^1e_1 +\mu \epsilon^2e_2+\nu \epsilon^3e_3 &\textrm{semisimple}& 18  \\

       q^{(21)}+\phi(X_{h_1-h_2})&\mu(\epsilon^1e_1+\epsilon^2e_2)+\nu \epsilon^3e_3+ \epsilon^1e_2&\textrm{mixed}& 18\\

        q^{(11)}+\phi(X_{-2h_1})&\mu \epsilon^2e_2   +\nu \epsilon^3e_3+(\epsilon^1)^2&\textrm{mixed}& 18\\
       
              q^{(2)}+ \phi(X_{h_2-h_3}+X_{-2h_1})&\nu (\epsilon^2e_2+\epsilon^3e_3)+   \epsilon^2e_3+    (\epsilon^1)^2&\textrm{mixed}& 18\\
       
           q^{(3)}+\phi(X_{h_1-h_2}+X_{h_2-h_3})&\nu (\epsilon^1e_1+\epsilon^2e_2+\epsilon^3e_3 )+\epsilon^1e_2 + \epsilon^2e_3&\textrm{mixed}& 18\\

       q^{(1)}+\phi(X_{h_1-h_2}-X_{2h_2})&\nu    \epsilon^3e_3+ \epsilon^1e_2+e_2^2&\textrm{mixed}& 18\\

       q_{[4,2]}  & \epsilon^1e_3+e_2^2+e_3^2 &\textrm{nilpotent}& 16  \\

      q^{(21)}  & \mu( \epsilon^1e_1+   \epsilon^2e_2)+\nu\epsilon^3e_3 &\textrm{semisimple}& 16  \\
          q^{(11)}  & \mu \epsilon^2e_2   +\nu \epsilon^3e_3 &\textrm{semisimple}& 16  \\

       q^{(2)}+ \phi(X_{h_2-h_3}) &\nu (\epsilon^2e_2+\epsilon^3e_3)+   \epsilon^2e_3  &\textrm{mixed}& 16\\
       
       q^{(2)}+   \phi(X_{-2h_1})&\nu (\epsilon^2e_2+\epsilon^3e_3)+     (\epsilon^1)^2&\textrm{mixed}& 16\\

        q^{(3)}+\phi(X_{h_1-h_2})&\nu (\epsilon^1e_1+\epsilon^2e_2+\epsilon^3e_3 )+\epsilon^1e_2&\textrm{mixed}& 16\\
       
          q^{(1)}+\phi(-\tfrac{1}{2}X_{h_1+h_2})&\nu    \epsilon^3e_3+ e_1e_2&\textrm{mixed}& 16\\

       q_{[4,1^2]}  & \epsilon^1e_2+e_2^2  &\textrm{nilpotent}& 14  \\
       q_{[3^2]}  & \epsilon^1e_3+e_2e_3 &\textrm{nilpotent}& 14  \\

             q^{(2)}   &  \nu (\epsilon^2e_2+\epsilon^3e_3) &\textrm{semisimple}& 14  \\
          q^{(1)}+\phi(-X_{2h_1})&\nu    \epsilon^3e_3+ e_1^2&\textrm{mixed}& 14\\
       
       q_{[2^3]}  &e_1^2+e_2^2+e_3^2 &\textrm{nilpotent}& 12  \\
       
                  q^{(3)}   & \nu ( \epsilon^1e_1+\epsilon^2e_2+\epsilon^3e_3 )&\textrm{semisimple}& 12  \\
       
       q_{[2^2,1^2]}  & e_1^2+e_2^2  &\textrm{nilpotent}& 10  \\
       
               q^{(1)}  & \nu \epsilon^3e_3 &\textrm{semisimple}& 10 \\

       q_{[2,1^4]}  & e_1^2  &\textrm{nilpotent}& 6  \\

    \end{array}
\end{equation*}
We stress that the basis elements appearing in the column labeled ``coordinate expression" are to be regarded as elements of $\CC^*$: a more homogeneous, though less  explanatory coordinate representation, involving only the $x^i$'s, may be obtained   by means of the  substitution \eqref{eq:C.C.Star}.

\subsection{Representation--theoretic preliminaries}\label{subsrepTheoPrelims}
With the standard choice of a Cartan subalgebra, 
\begin{equation}\label{eqCartSub}
    \gh:=\{\diag(\lambda,\mu,\nu,-\lambda,-\mu,-\nu)\mid \lambda,\mu,\nu\in\C\}\, ,
\end{equation}
%
%
\begin{figure}
    \centering
    \epsfig{file=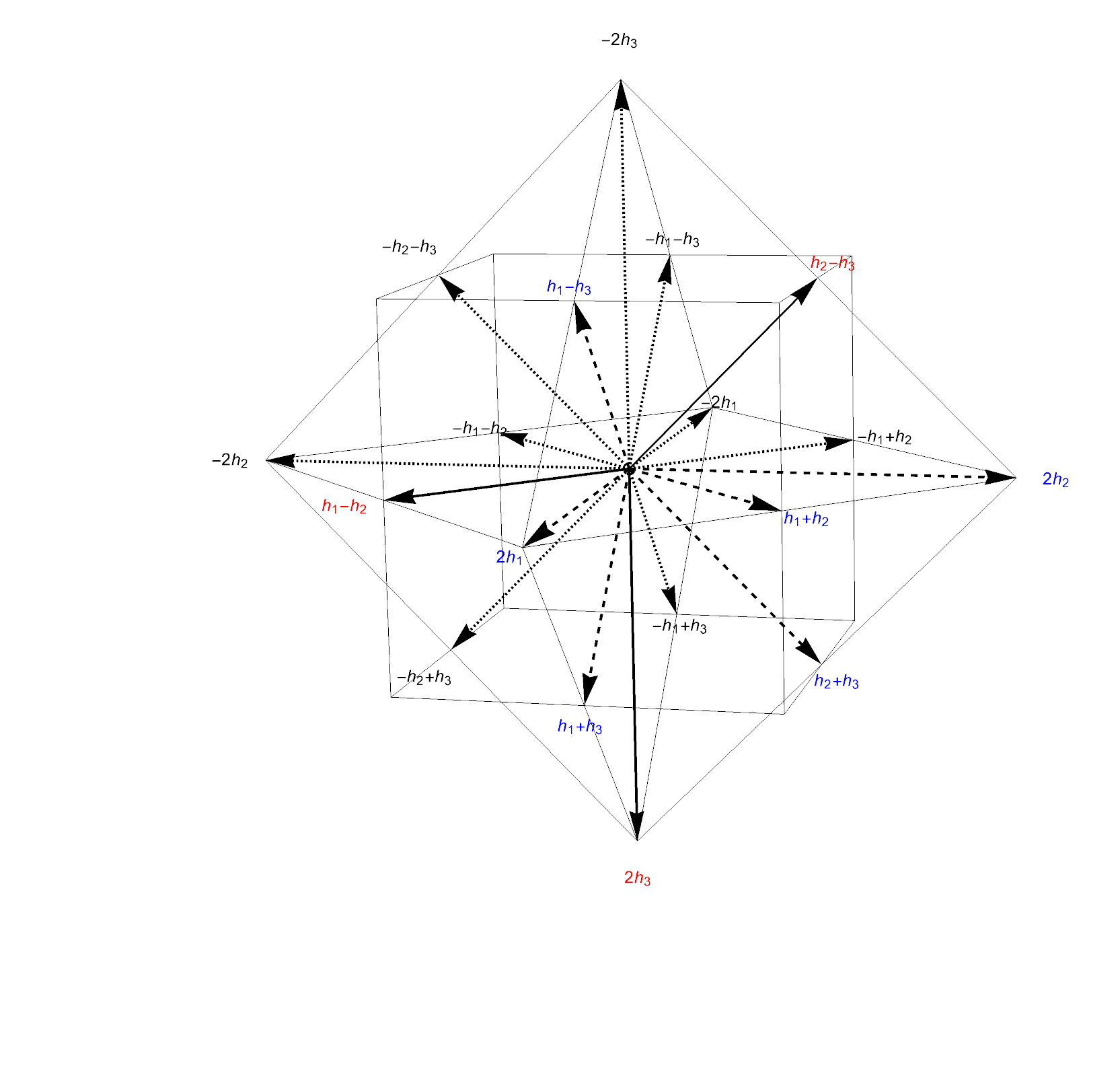,width=0.8\textwidth}
    \caption{The root system of type $\mathsf{C}_3$.}
    \label{fig:rootsystemC3}
\end{figure}
the 3--dimensional, eighteen--elements root system $\Phi$ of $\sp(\CC)$ is generated by the simple roots
\begin{equation*}
    \Delta:=\{h_1-h_2, h_2-h_3, 2h_3\}\, ,
\end{equation*}
where $2h_3$ is the long one: by $h_i\in\gh^*$ we mean the linear operator reading off the $i\Th$ entry of a diagonal matrix $H\in\gh$, i.e.,
\begin{equation*}
    h_1(H):=\lambda\, ,h_2(H):=\mu\, ,h_3(H):=\nu\, .
\end{equation*}

The corresponding set of positive roots will be then
\begin{equation*}
    \Phi^+=\Delta\cup\{ h_1-h_3, 2h_1, 2h_2, h_1+h_2, h_2+h_3, h_1+h_3  \}\, ,
\end{equation*}
where $\Phi=\Phi^+\cup \Phi^-$, with $\Phi^-=-\Phi^+$, see Figure \ref{fig:rootsystemC3}; hence, the root space decomposition of $\sp(\CC)$ is
\begin{equation*}
    \sp(\CC)=\gh\bigoplus_{\alpha\in\Phi}\g_\alpha\, .
\end{equation*}
\begin{remark}\label{remEIJ}
We let $E(i,j)\in\gl(\CC)$ be the $(i,j)$--elementary matrix and we pick a generator  $X_\alpha\in\sp(\CC)$   of the (one--dimensional) root space $\g_\alpha$, for all $\alpha\in\Phi$; it is then easy to see that: 
\begin{itemize}
    \item matrices $E(1,4), E(2,5)$ and $E(3,6)$ will be the root vectors $X_{2h_1}, X_{2h_2}$ and  $X_{2h_3}$ corresponding to the long positive roots $2h_1$, $2h_2$ and $2h_3$,  respectively;
    \item matrices $E(1, 2) - E(5,4), E(2, 3) - E(6, 5)$ and $E(1, 3) - E(6, 4)$ will be the root vectors $X_{h_1-h_2}, X_{h_2-h_3}$ and  $X_{h_1-h_3}$ corresponding to the short positive roots $h_1-h_2$, $h_2-h_3$ and $h_1-h_3$,  respectively;
    \item matrices $E(1, 5) + E(2, 4), E(2, 6) + E(3, 5)$ and $E(1, 6) + E(3, 4)$ will be the root vectors $X_{h_1+h_2}, X_{h_2+h_3}$ and  $X_{h_1+h_3}$ corresponding to the short positive roots $h_1+h_2$, $h_2+h_3$ and $h_1+h_3$,    respectively;
    \item matrix $X_\alpha:=X_{-\alpha}^t$ will be the root vector corresponding to the negative root  $\alpha\in\Phi^-$.
\end{itemize}
\end{remark}
\begin{remark}\label{remFundWeights}
Recalling that the fundamental weights of $\sp(\CC)$ are
\begin{equation*}
    h_1\, ,\quad h_1+h_2\, ,\quad h_1+h_2+h_3\, 
\end{equation*}
(see, e.g., \cite[2.2.13]{MR2532439}), we denote by $W_{(a,b,c)}$ the irreducible $\sp(\CC)$--representation whose highest weight is
\begin{equation*}
    (a+b+c)h_1+(b+c)h_2+ch_3\, ,
\end{equation*}
for any non--negative integers $a,b,c$. In particular, $W_{(1,0,0)}=\CC$ (resp., $W_{(2,0,0)}=\sp(\CC)$) and the highest weight vector is $e_1$ (resp., $X_{2h_1}$); accordingly, $W_{(0,0,1)}=\Lambda^3_0(\CC)$ with highest weight $h_1+h_2+h_3$ and highest weight vector $e_{123}$: the  irreducible representation $W_{(0,0,2)}$, whose highest weight vector $e_{123}^2$ is the square of $e_{123}$, turns out to be 84--dimensional and will play some role in the sequel.
\end{remark}

\subsection{Nilpotent orbits in $\sp(\CC)$}\label{subsNilpotent}
The Hasse diagram of nonzero nilpotent orbits in $\sp(\CC)$ is well-known (see, e.g., \cite[Example 6.2.6]{Collingwood2017}):
\begin{equation*}
    \xymatrix{
     & \OO_{[6]}\ar@{-}[d]&  & \dim=18\\
      & \OO_{[4,2]}\ar@{-}[dr]\ar@{-}[dl]& & \dim=16\\ 
      \OO_{[4,1^2]}\ar@{-}[dr]&&  \OO_{[3^2]}\ar@{-}[dl] & \dim=14\\
     & \OO_{[2^3]}\ar@{-}[d]&  & \dim=12\\
     & \OO_{[2^2,1^2]}\ar@{-}[d]& & \dim=10\\
     & \OO_{[2,1^4]} & & \dim=6
    }
\end{equation*}
Diagram above is a particular example of the Dynkin--Kostant classification \cite[Chapter 3]{Collingwood2017}, which ultimately associates with any (nonzero)  
\emph{nilpotent} orbit $\OO:=\Sp(\CC)\cdot X$,  the orbit of  a distinguished \emph{semisimple} element $H\in\gh$: these distinguished semisimple elements can be then labeled by \emph{weighted Dynkin diagrams}, the admissible weights being $0,1$ and $2$.\par 
Indeed, thanks to Jacobson--Morozov theorem, it is always possible to find, beside $H$, an   appropriate $ Y\in\sp(\CC)$, such that  the three--elements subset
\begin{equation*}
    \{X,H,Y\}\subset\sp(\CC)
\end{equation*}
constitutes a so--called \emph{standard $\sll_2$--triple}, $X$, $H$ and $Y$ being its \emph{nilpositive}, \emph{neutral} and \emph{nilnegative} element, respectively. Then a theorem by Kostant guarantees that any two   standard $\sll_2$--triple sharing the same nilpositive element are conjugate: this means that the orbit of the neutral element
\begin{equation*}
    \OO_H:=\Sp(\CC)\cdot X\, ,
\end{equation*}
is well defined and canonically asssociated with the nilpotent orbit $\OO$. The Dynkin--Kostant classification give us the (finite) list of all conjugacy classes of the so--obtained neutral elements: a representative for each class can be easily read off from the corresponding weighted Dynkin diagram.\par
Below we start from a weighted Dynkin diagram, we construct the corresponding neutral element $H$, and then we compute all the possible standard $\sll_2$--triples containing $H$: we finally pick a particular  nilpositive element $X$, such that the corresponding quadratic form on $\CC$ takes a particularly easy expression. 
  \subsubsection{The principal orbit $\OO_{\textrm{prin}}=\OO_{[6]}$} The weighted Dynkin diagram is
  \begin{equation*}
      \xymatrix{
      \overset{2}{\bullet} \ar@{-}[r]&\overset{2}{\bullet} &\overset{2}{\bullet}  \ar@{=}[l]|{\scalebox{1.6}{$<$}}
      }\, ,
  \end{equation*}
to which it corresponds the neutral element $H=\diag(5,3,1,-5,-3,-1)$: easy computations show that, if $\{X,H,Y\}$ is a  standard $\sll_2$--triple containing the aforementioned neutral element $H$, then its nilpositive and nilnegative elements must necessarily be 
\begin{equation}\label{eqStSL2TrpMaxOrb}
    X=\left(
\begin{array}{cccccc}
 0 & \text{$\alpha_1 $} & 0 & 0 & 0 & 0 \\
 0 & 0 & \text{$\alpha_2 $} & 0 & 0 & 0 \\
 0 & 0 & 0 & 0 & 0 & \text{$\alpha_3 $} \\
 0 & 0 & 0 & 0 & 0 & 0 \\
 0 & 0 & 0 & -\text{$\alpha_1 $} & 0 & 0 \\
 0 & 0 & 0 & 0 & -\text{$\alpha_2 $} & 0 \\
\end{array}
\right)\, ,\quad Y=\left(
\begin{array}{cccccc}
 0 & 0 & 0 & 0 & 0 & 0 \\
 \frac{5}{\text{$\alpha_1 $}} & 0 & 0 & 0 & 0 & 0 \\
 0 & \frac{8}{\text{$\alpha_2 $}} & 0 & 0 & 0 & 0 \\
 0 & 0 & 0 & 0 & -\frac{5}{\text{$\alpha_1 $}} & 0 \\
 0 & 0 & 0 & 0 & 0 & -\frac{8}{\text{$\alpha_2 $}} \\
 0 & 0 & \frac{9}{\text{$\alpha_3 $}} & 0 & 0 & 0 \\
\end{array}
\right)\, ,
\end{equation}
respectively. 
In particular, none of the three parameters $\alpha_1$, $\alpha_2$ and $\alpha_3$ can be zero; therefore we can set them to be $1$, $1$ and $-1$, respectively: this choice leads to a particularly simple expression for the associated quadratic form on $\CC$, viz.
\begin{equation*}
    q_{[6]}:=\phi(X)=\epsilon^1e_2+\epsilon^2e_3+e_3^2\, .
\end{equation*}
In the next cases we will refrain from showing the explicit form of the nilpositive and nilnegative elements forming a standard $\sll_2$--triple together with $H$; indeed, on a deeper level, what we have done above in \eqref{eqStSL2TrpMaxOrb}, can be reformulated as the definition of a $\sll_2$--gradation of $\sp(\CC)$, that is
\begin{equation}\label{eqSL2Grad}
    \sp(\CC)=\bigoplus_{i\in \Z}\g_i\, ,
\end{equation}
where $g_i:=\{Z\in \g\mid [H,Z]=i\}$: the chosen  nilpositive element $X$ is then an arbitrary nonzero element of the 3--dimensional constituent $\g_2$. In the  case under consideration  of the  principal orbit $\OO_{[6]}$, the nontrivial pieces of the $\sll_2$--gradation are:
\begin{equation*}
    \g_{-10}\oplus\g_{-8}\oplus\g_{-6}\oplus\g_{-4}\oplus \g_{-2}\oplus \gh\oplus \underset{\g_2}{\underbrace{ \Span{X_{h_1-h_2},X_{h_2-h_3},X_{2h_3}}}}
    \oplus \underset{\g_4}{\underbrace{ \Span{X_{h_1-h_3},X_{h_2+h_3}}}}
    \oplus \underset{\g_6}{\underbrace{ \Span{X_{h_1+h_3},X_{2h_2}}}}
    \oplus \underset{\g_8}{\underbrace{ \Span{X_{h_1+h_2}}}}
    \oplus \underset{\g_{10}}{\underbrace{ \Span{X_{2h_1}}}}\, .
\end{equation*}
In general, such a gradation is very useful in computing the Lie algebra $\g^X:=\stab_{\sp(\CC)}(X)$ of the subgroup $\Stab_{\Sp(\CC)}(X)$ of $\Sp(\CC)$ stabilizing $X$: indeed,   $\g^X$ is compatible with the  $\sll_2$--gradation:
\begin{equation*}
    \g^X=\bigoplus_{i\in \Z}\g_i^X\,,\quad  \g_i^X:=\g_i\cap \g^X\, ,
\end{equation*}
see \cite[(3.4.2)]{Collingwood2017}. Then one realizes that    solving the equation $[Z,X]=0$ in $\g^i$ is computationally easier that solving it in the whole of $\g$: in the case of $\OO_{[6]}$ we find out that only $\g^X_2$, $\g^X_6$ and $\g^X_{10}$ do not vanish and, in fact, are one--dimensional. It follows that $\dim\g^X=1+1+1=3$ and then $\dim \OO_{[6]}=\dim\sp(\CC)-\dim\g^X=21-3=18$; on this concern it is worth observing that the dimension of the principal nilpotent orbit of a semisimple Lie group $G$ is \emph{always} equal to $\dim(G)-\rk(G)$, see \cite[Lemma~4.1.3]{Collingwood2017}.\par
We will not insist here on the topology of nilpotent orbits of $G$, which   is highly nontrivial \cite[Chapter 6]{Collingwood2017}; we only recall that there exists a unique orbit of codimension $\rk(G)+2$, denoted by $\OO_{\textrm{subreg}}$ and called \emph{subregular}, which is open and dense in     $\overline{\OO_{\textrm{prin}}}\smallsetminus\OO_{\textrm{prin}}$, according to Steinber theorem  \cite[Theorem 4.2.1]{Collingwood2017}.
  \subsubsection{The subregular orbit $\OO_{\textrm{\normalfont subreg}}=\OO_{[4,2]}$}
  In the case $G=\Sp(\CC)$ we find   a $16$--dimensional orbit,   corresponding to the   weighted Dynkin diagram  
  \begin{equation*}
      \xymatrix{
      \overset{2}{\bullet} \ar@{-}[r]&\overset{0}{\bullet} &\overset{2}{\bullet}  \ar@{=}[l]|{\scalebox{1.6}{$<$}}
      }\, .
  \end{equation*}
The  $\sll_2$--gradation \eqref{eqSL2Grad} above now reads
\begin{equation*}
    \g_{-6}\oplus\g_{-4}\oplus\g_{-2} \oplus \underset{\g_0}{\underbrace{ \gh\oplus\Span{X_{h_2-h_3},X_{-h_2+h_3}}}}
    \oplus \underset{\g_2}{\underbrace{ \Span{X_{h_1-h_2},X_{h_1-h_3},X_{h_2+h_3},X_{2h_2},X_{2h_3}}}}
    \oplus \underset{\g_4}{\underbrace{ \Span{X_{h_1+h_2},X_{h_1+h_3}}}}
    \oplus \underset{\g_6}{\underbrace{ \Span{X_{2h_1}}}}
\end{equation*}
and $\OO_{\textrm{subreg}}$ will be generated by any nonzero element of $\g_2$: a particular choice of the nilpositive element $X\in \g_2$ gives us
\begin{equation*}
    q_{[4,2]}:=\epsilon^1e_3+e_2^2+e_3^2\, .
\end{equation*}
It is then easy to check that the only   nontrivial constituents of $\g^X$ are $\g^X_2$, $\g^X_4$ and $\g^X_6$,  and have dimensions 3, 1 and 1, respectively: indeed, $\dim \OO_{\textrm{subreg}}=21-(3+1+1)=16=\dim(\Sp(\CC))-\rk(\Sp(\CC))-2$. 

\subsubsection{The 14--dimensional strata: $\OO_{[4,1^2]}$ and $\OO_{[3,3]}$}
The singular locus   $\overline{\OO_{\textrm{subreg}}}\smallsetminus \OO_{\textrm{subreg}}$ consists of the closure of two 14--dimensional orbits:
\begin{equation*}
 \overline{\OO_{\textrm{subreg}}}\smallsetminus \OO_{\textrm{subreg}}=  \overline{\OO_{[4,1^2]}}\cup\overline{\OO_{[3,3]}}\, ,
\end{equation*}
one of which, namely $\OO_{[4,1^2]}$, will be of a paramount importance for us, as it contains the  cocharacteristic variety of a ``generic'' Monge--Amp\`ere equation.\par
The    weighted Dynkin diagrams of $\OO_{[4,1^2]}$ and $\OO_{[3,3]}$ are 
  \begin{equation*}
      \xymatrix{
      \overset{2}{\bullet} \ar@{-}[r]&\overset{1}{\bullet} &\overset{0}{\bullet}   \ar@{=}[l]|{\scalebox{1.6}{$<$}}
    }\, ,\quad 
      \xymatrix{
      \overset{0}{\bullet} \ar@{-}[r]&\overset{2}{\bullet} &\overset{0}{\bullet}  \ar@{=}[l]|{\scalebox{1.6}{$<$}}
      }\, ;
  \end{equation*}
 the $\sll_2$--gradations are:
\begin{equation*}
    \g_{-6}\oplus\g_{-4}\oplus\g_{-3} \oplus\g_{-2} \oplus\g_{-1} \oplus  \underset{\g_0}{\underbrace{ \gh\oplus\Span{X_{2h_3},X_{-2h_3}}}}
    \oplus \underset{\g_1}{\underbrace{ \Span{X_{h_2-h_3},X_{h_2+h_3}}}}
    \oplus \underset{\g_2}{\underbrace{ \Span{X_{h_1-h_2},X_{2h_2}}}}
    \oplus \underset{\g_3}{\underbrace{ \Span{X_{h_1-h_3},X_{h_1+h_3}}}}
    \oplus \underset{\g_4}{\underbrace{ \Span{X_{h_1+h_2}}}}
    \oplus \underset{\g_6}{\underbrace{ \Span{X_{2h_1}}}}
\end{equation*}
and
\begin{equation*}
    \g_{-4}\oplus\g_{-2} \oplus   \underset{\g_0}{\underbrace{ \gh\oplus\Span{X_{h_1-h_2},X_{2h_3},X_{-h_1+h_2},X_{-2h_3}}}}
    \oplus \underset{\g_2}{\underbrace{ \Span{X_{h_1-h_3},X_{h_2-h_3},X_{h_1+h_3},X_{h_2+h_3}}}}
    \oplus \underset{\g_4}{\underbrace{ \Span{X_{h_1+h_2},X_{2h_1},X_{2h_2}}}}\, ,
\end{equation*}
respectively. We finally obtain
\begin{equation*}
    q_{[4,1^2]}:=\epsilon_1e_2+e_2^2\, ,\quad q_{[3^2]}:=\epsilon^1e_3+e_2e_3\, .
\end{equation*}

\subsubsection{The 12--dimensional orbit  $\OO_{[2^3]}$}
The common singular locus 
\begin{equation*}
  (\overline{\OO_{[4,1^2]}}\smallsetminus \OO_{[4,1^2]}) \cap(\overline{\OO_{[3,3]}}\smallsetminus \OO_{[3,3]}) 
\end{equation*}
is the closure of the 12--dimensional orbit   $\OO_{[2^3]}$; its   weighted Dynkin diagram is 
  \begin{equation*}
      \xymatrix{
      \overset{0}{\bullet} \ar@{-}[r]&\overset{0}{\bullet} &\overset{2}{\bullet}  \ar@{=}[l]|{\scalebox{1.6}{$<$}}
      }\, ,
  \end{equation*}
 and the corresponding  $\sll_2$--gradation:
\begin{equation*}
    \g_{-2}  \oplus  \underset{\g_0}{\underbrace{ \gh\oplus\Span{X_{h_1-h_2},X_{h_1-h_3},X_{h_2-h_3},X_{-h_1+h_2},X_{-h_1+h_3},X_{-h_2+h_3}}}}
    \oplus \underset{\g_2}{\underbrace{ \Span{X_{h_1+h_2},X_{h_1+h_3},X_{h_2+h_3},X_{2h_1},X_{2h_2},X_{2h_3}}}}\, .
\end{equation*}
The quadratic form is
\begin{equation*}
    q_{[2^3]}:=e_1^2+e_2^2+e_3^2\,   .
\end{equation*}

\subsubsection{The 10--dimensional orbit  $\OO_{[2^2,1^2]}$}
The 10--dimensional stratum
\begin{equation*}
  \overline{\OO_{[2^2,1^2]}}=\overline{\OO_{[3^2]}}\smallsetminus \OO_{[3^2]} 
\end{equation*}
is last non--smooth one and,  in our further analysis, it 
will be parametrizing the Goursat--type symplectic  Monge--Amp\`ere equations, see Section \ref{sec:Goursat}. The weighted Dynkin diagram of $\OO_{[2^2,1^2]}$ is 
  \begin{equation*}
      \xymatrix{
      \overset{0}{\bullet} \ar@{-}[r]&\overset{1}{\bullet} &\overset{0}{\bullet}  \ar@{=}[l]|{\scalebox{1.6}{$<$}}
      }\, ,
  \end{equation*}
 and the corresponding  $\sll_2$--gradation:
\begin{equation*}
    \g_{-2}\oplus  {\g_{-1}}  \oplus  \underset{\g_0}{\underbrace{ \gh\oplus\Span{X_{h_1-h_2},X_{2h_3},X_{-h_1+h_2},X_{-2h_3}}}}
    \oplus \underset{\g_1}{\underbrace{ \Span{X_{h_1-h_3},X_{h_2-h_3},X_{h_1+h_3},X_{h_2+h_3}}}}
    \oplus \underset{\g_2}{\underbrace{ \Span{X_{h_1+h_2},X_{2h_1},X_{2h_2}}}}\, .
\end{equation*}
The quadratic form is
\begin{equation*}
    q_{[2^2,1^2]}:=e_1^2+e_2^2\,   .
\end{equation*}

\subsubsection{The cone over the adjoint variety   $\OO_{[2,1^4]}$}\label{subAdjVar}
The smallest nonzero nilpotent orbit  is 
\begin{equation*}
  \OO_{[2,1^4]}=\overline{\OO_{[2,1^4]}}=\overline{\OO_{[2^2,1^2]}}\smallsetminus \OO_{[2^2,1^2]}
\end{equation*}
and has dimension 6: unique, amongst all adjoint orbits, for being closed, its projectivization $\p( \OO_{[2,1^4]})$ is a remarkable 5--dimensional homogeneous contact manifold, known as the \emph{adjoint variety} of $\Sp(\CC)$; its weighted Dynkin diagram  is
  \begin{equation*}
      \xymatrix{
      \overset{1}{\bullet} \ar@{-}[r]&\overset{0}{\bullet} &\overset{0}{\bullet}  \ar@{=}[l]|{\scalebox{1.6}{$<$}}
      }\, .
  \end{equation*}
The  $\sll_2$--gradation of the subadjoint variety, namely
\begin{equation*}
    \g_{-2}\oplus  {\g_{-1}}  \oplus  \underset{\g_0}{\underbrace{ \gh\oplus\Span{X_{h_2-h_3},X_{h_2+h_3},X_{2h_2},X_{2h_3},X_{-h_2+h_3},X_{-h_2-h_3},X_{-2h_2},X_{-2h_3}}}}
    \oplus \underset{\g_1}{\underbrace{ \Span{X_{h_1-h_2},X_{h_1-h_3},X_{h_1+h_2},X_{h_1+h_3}}}}
    \oplus \underset{\g_2}{\underbrace{ \Span{X_{2h_1}}}}\, ,
\end{equation*}
is the only one, for which $\dim\g_2=1$; moreover, the $\g_2$--valued two--form
\begin{eqnarray*}
\Lambda^2(\g^*_1)&\longrightarrow & g_2\, ,\\
(Z,W) &\longmapsto & [Z,W]
\end{eqnarray*}
is non--degenerate and this motivates calling such a gradation a \emph{contact grading}, see \cite{MR3904635} on this concern.\par
The following   quadratic form ends our list of normal forms of quadratic forms on $\CC$ corresponding to (nonzero) nilpotent elements of $\sp(\CC)$:
\begin{equation*}
    q_{[2,1^4]}:=e_1^2\, .
\end{equation*}

 
\subsection{Semisimple orbits in $\sp(\CC)$}\label{subsSemisimple}
The theory of semisimple orbits in semisimple Lie algebras is considerably simpler than that of nilpotent ones: their classification boils down to taking the quotient of a Cartan subalgebra $\gh$ with respect to the natural action of the Weyl group.\par
In the present case the elements of $\gh$ are unambiguously labeled by a vector $(\lambda,\mu,\nu)\in\C^3$, cf. \eqref{eqCartSub}, and the Weyl group is 
\begin{equation*}
    W=S_3\ltimes \Z_2^3\, ,
\end{equation*}
acting naturally on $\C^3$: this means that we identify 
\begin{equation*}
    (\lambda,\mu,\nu)\sim (\epsilon_1\sigma(\lambda),\epsilon_2\sigma(\mu),\epsilon_3\sigma(\nu))\, ,
\end{equation*}
where $\epsilon_i=\pm 1$, for $i=1,2,3$, and $\sigma$ is a permutation of the set $\{\lambda, \mu,\nu\}$, and the corresponding equivalence class unambiguously identifies a semisimple orbit.\par
In what follows, symbol $H$ denotes an element of $\gh$ labeled by the equivalence class of $(\lambda,\mu,\nu)$, while $\g^H$ denotes its stabilizing subalgebra, i.e., the Lie algebra of the subgroup
\begin{equation*}
    G^H:=\{x\in\Sp(\CC)\mid x H x^{-1}=H\}= \{x\in\Sp(\CC)\mid x H =H x\}\, 
\end{equation*}
of elements of $\Sp(\CC)$ commuting with $H$. Letting %
\begin{equation*}
    \CC=(\ker H)\oplus\bigoplus_{\underset{\alpha\neq 0}{\alpha=\lambda,\mu,\nu}} (V_\alpha\oplus V_\alpha^*)\, ,
\end{equation*}
where $V_\alpha$ denotes the eigenspace relative to the eigenvector $\alpha\in\C\smallsetminus\{0\}$, 
we immediately see that
\begin{equation}\label{eqDecompGH}
    G^H=\Sp(\ker H)\times\prod_{\underset{\alpha\neq 0}{\alpha=\lambda,\mu,\nu}}\GL(V_\alpha)\, .
\end{equation}
Therefore, there are only six possible isomorphism types of stabilizers, collected below:
\begin{equation*}
    \begin{array}{l||l|l|l||l|l||l}
    \textrm{type of }\OO&
    \dim V_\lambda &
    \dim V_\mu& \dim V_\nu&\dim\ker H &\textrm{stabilizer} & \dim\OO\\
    \hline
    (111) & 1 &1&1&0 &\GL(V_\lambda)\times\GL(V_\mu)\times\GL(V_\nu)=\GL_1^3 & 18\\
    (21) & 0&2&1&0& \GL(V_\mu)\times\GL(V_\nu)=\GL_2\times\GL_1 & 16\\
    (11) & 0&1&1&2 &\Sp(\ker H)\times \GL(V_\mu)\times\GL(V_\nu)=\SL_2\times\GL_1^2 & 16\\
    (2) & 0&0&2&2&\Sp(\ker H)\times \GL(V_\nu)=\SL_2\times\GL_2 &14\\
    (3) & 0&0&3&0&  \GL(V_\nu)=\GL_3 &12\\
    (1) & 0&0&1&4& \Sp(\ker H)\times \GL(V_\nu)=\Sp_4\times\GL_1 &10
    \end{array}
\end{equation*}
Observe that, by the \emph{type} of the orbit $\OO$ passing through $H$ we meant the collection of   the nonzero  dimensions of $V_\lambda$, $V_\mu$ and $V_\nu$.

\subsubsection{18--dimensional semisimple orbits}   The type of $H$ is $(111)$ if   $\lambda,\mu,\nu$ are different from zero and different form each other; this is the ``non--degenerate'' case, that is, when $H$ commutes only with other elements of $\gh$: this means that 
\begin{equation*}
    G^H=\GL_1^3\, ,\quad \g^H=\gh\,  ,
\end{equation*}
and then $\dim (\Sp(\CC)\cdot H)=21-3=18$.\par
The quadratic form on $\CC$ corresponding to $H$ will be denoted by
\begin{equation*}
   q^{(111)}:=  \lambda  \epsilon^1e_1 + \mu  \epsilon^2e_2+ \nu  \epsilon^3e_3\, .
\end{equation*}

\subsubsection{16--dimensional semisimple orbits} The type of $H$ is $(21)$ if  $\lambda=\pm \mu$, and $\nu$ are different from zero and different form $\pm$ each other: in this case
\begin{equation}\label{eqStabH21}
   G^H=\GL_2\times\GL_1\, ,\quad \g^H=\gh\oplus\Span{X_{h_1-h_2},X_{-h_1+h_2}}\, ,
\end{equation}
whence $\dim\g^H=5$ and then the orbit is 16--dimensional.\par
The type of $H$ is $(11)$ if $\lambda=0$ and that $\mu$ and $\nu$ are different from zero and different form $\pm$ each other: we obtain
\begin{equation}\label{eqStabH11}
    G^H=\Sp_2\times\GL_1^2=\SL_2\times\GL_1^2\, ,\quad \g^H=\gh\oplus\Span{X_{2h_1},X_{-2h_1}}\, ,
\end{equation}
that is another 16--dimensional orbit. The corresponding quadratic forms are
\begin{equation*}
    q^{(21)}:= \mu(  \epsilon^1e_1+  \epsilon^2e_2)+ \nu \epsilon^3e_3\, ,\quad q^{(11)}:=  \mu  \epsilon^2e_2+ \nu \epsilon^3e_3\, ,
\end{equation*}
respectively.

\subsubsection{14--dimensional semisimple orbits} The type of $H$ is $(2)$ if $\lambda=0$ and   $\mu=\pm \nu$ is different than zero: we obtain   7--dimensional stabilizers
\begin{equation}\label{eqStabH2}
    G^H=\Sp_2\times\GL_2=\SL_2\times\GL_2\, ,\quad \g^H=\gh\oplus\Span{X_{h_2-h_3},X_{2h_1},X_{-h_2+h_3},X_{-2h_1}}\, ,
\end{equation}
whence a 14--dimensional orbit and the corresponding  quadratic form is
\begin{equation*}
   q^{(2)}:=   \nu  (\epsilon^2e_2    + \epsilon^3e_3)\, .
\end{equation*}

\subsubsection{12--dimensional semisimple orbits} The type of $H$ is $(3)$ if $\lambda=\pm\mu=\pm\nu$ is different from zero; in this case the stabilizers are  9--dimensional: 
\begin{equation}\label{eqStabH3}
   G^H=\GL_3\, ,\quad \g^H=\gh\oplus\Span{X_{h_1-h_2},X_{h_1-h_3},X_{h_2-h_3},X_{-h_1+h_2},X_{-h_1+h_3},X_{-h_2+h_3}}\, ,
\end{equation}
whence a 12--dimensional orbit and the corresponding quadratic form is
\begin{equation*}
   q^{(3)}:=  \nu  (\epsilon^1e_1+\epsilon^2e_2+\epsilon^3e_3)\, .
\end{equation*}

\subsubsection{10--dimensional semisimple orbits} The smallest nontrivial semisimple orbits have dimension 10 and correspond to an $H$ of type $(1)$, i.e., with $\lambda=\mu=0$ and $\nu\neq 0$: the stabilizers 
\begin{equation}\label{eqStabH1}
G^H=\Sp_4\times\GL_1 \, ,\quad
    \g^H=\gh\oplus\Span{X_{h_1-h_2},X_{h_1+h_2},X_{2h_1},X_{2h_2},X_{-h_1+h_2},X_{-h_1-h_2},X_{-2h_1},X_{-2h_2}}\, 
\end{equation}
are  11--dimensional, 
whence a 10--dimensional orbit and the corresponding quadratic form is
\begin{equation*}
  q^{(1)}:=   \nu \epsilon^3e_3\, .
\end{equation*}


\subsection{Mixed orbits in $\sp(\CC)$}\label{subsMixed}
We pass now to the generic case and we study orbits passing through (nonzero) elements of the form
\begin{equation*}
    Z=H+X\, ,
\end{equation*}
where $H\in\g$ is semisimple and $X\in\NN$, where $\NN(\g)\subset\g$ denotes the \emph{nilpotent cone}, that is, the subset of nilpotent elements of the Lie algebra $\g$, and, moreover, $[H,X]=0$.\par
The strategy consists in bringing $H$ to one of the normal forms above (corresponding to the quadratic forms labeled $q^{(111)},q^{(21)},q^{(11)},q^{(2)},q^{(3)},q^{(1)}$): then, thanks to the Jordan's decomposition theorem, the nilpotent part $X$ of $Z$ must be coming from the set
\begin{equation*}
    \g^H\cap\NN =\stab_{\sp(\CC)}(H)\cap\NN=\{X\in\sp(\CC)\mid [H, X]=0\}\cap\NN\, ,
\end{equation*}
whence the problem is reduced to studying the $G^H$--orbits in $\g^H\cap\NN$, where $G^H$ has the structure shown in \eqref{eqDecompGH} and acts naturally and componentwise on 
\begin{equation*}
    g^H=\sp(\ker H)\oplus\bigoplus_{\underset{\alpha\neq 0}{\alpha=\lambda,\mu,\nu}}\gl(V_\alpha)\, .
\end{equation*}
Such a particular structure of $g^H$ is mirrored by an analogous  decomposition of the nilpotent cone:
\begin{equation*}
    \g^H\cap\NN=\NN(\sp(\ker H))+\sum_{\underset{\alpha\neq 0}{\alpha=\lambda,\mu,\nu}}\NN(\gl(V_\alpha))\, ,
\end{equation*}
whence it sufficies to classify the orbit of 
\begin{equation*}
    G^H_{\textrm{ss}}:=\Sp(\ker H)\times\prod_{\underset{\alpha\neq 0}{\alpha=\lambda,\mu,\nu}}\SL(V_\alpha)\, 
\end{equation*}
acting naturally and component--wise on
\begin{equation*}
    \g^H\cap\NN=\NN(\sp(\ker H))+\sum_{\underset{\alpha\neq 0}{\alpha=\lambda,\mu,\nu}}\NN(\sll(V_\alpha))\, .
\end{equation*}
\subsubsection{Mixed orbits with semisimple part of type $ (111)$}
In this case, $\g^H=\gh$, whence $\g^H\cap\NN=0$: this means that the only mixed orbit with semisimple part of type $q^{(111)}$ is the orbit of $q^{(111)}$ itself.
\subsubsection{Mixed orbits with semisimple part of type $(21)$} In this case, $H$ corresponds to $(\mu,\mu,\nu)$: the space $V_\mu$ has dimension two and 
\begin{equation*}
    G^H_{\textrm{ss}}= \SL(V_\mu)\simeq \SL_2
\end{equation*}
acts naturally on 
\begin{equation*}
    \g^H\cap\NN= \NN(\sll(V_\mu))\subseteq\Sp(\CC)\cdot\Span{X_{h_1-h_2},X_{-h_1+h_2}}\, ,
\end{equation*}
cf. \eqref{eqStabH21}. Since in $\sll(V_\mu)\simeq \sll_2$ there is only one nontrivial  nilpotent orbit, namely the one passing through $ X_{h_1-h_2}$, we see that, besides $q^{(21)}$ itself, the only other orbit with semisimple  part $q^{(21)}$ is
\begin{equation*}
    q^{(21)}+\phi(X_{h_1-h_2})=\mu(\epsilon^1e_1+\epsilon^2e_2)+\nu \epsilon^3e_3+ \epsilon^1e_2\, .
\end{equation*}
The aforementioned nilpotent orbit in $\sll_2$ has dimension 2, whereas the orbit in $\sp(\CC)$ of the semisimple element $q^{(21)}$ has dimension 16: it follows that the mixed orbit has dimension 18.

\subsubsection{Mixed orbits with semisimple part of type $ (11)$} In this case, $H$ corresponds to $(0,\mu,\nu)$: the spaces $V_\mu$ and $V_\nu$ have both dimension one and 
\begin{equation*}
    G^H_{\textrm{ss}}= \Sp(\ker H)=\SL(\ker H)\simeq\SL_2
\end{equation*}
acts naturally on 
\begin{equation*}
    \g^H\cap\NN= \NN(\sll(\ker H))\subseteq\Sp(\CC)\cdot\Span{X_{2h_1},X_{-2h_1}}\, ,
\end{equation*}
cf. \eqref{eqStabH11}. 
Again, in $\sll(\ker H)\simeq\sll_2$ there is only one nontrivial  nilpotent orbit: the one passing, e.g.,  through $ X_{-2h_1}$, whence, besides $q^{(11)}$ itself, the only other orbit with semisimple part $q^{(11)}$ is
\begin{equation*}
    q^{(11)}+\phi(X_{-2h_1})=\mu \epsilon^2e_2   +\nu \epsilon^3e_3+(\epsilon^1)^2\, .
\end{equation*}
Recalling that also the semisimple element $q^{(11)}$ of $\sp(\CC)$ has a 16--dimensional orbit, we get another mixed orbit of dimension 18.

\subsubsection{Mixed orbits with semisimple part of type $ (2)$}  In this case, $H$ corresponds to $(0,\nu,\nu)$: the space $ V_\nu$ has   dimension two and 
\begin{equation*}
    G^H_{\textrm{ss}}= \Sp(\ker H)\times\SL(V_\nu)=\SL(\ker H)\times\SL(V_\nu)\simeq \SL_2\times \SL_2
\end{equation*}
acts naturally and componentwise on 
\begin{equation*}
    \g^H\cap\NN= \NN(\sll(\ker H))+\NN(\sll(V_\nu))\subseteq\Sp(\CC)\cdot(\Span{X_{h_2-h_3},X_{-h_2+h_3}}+\Span{X_{2h_1},X_{-2h_1}})\, ,
\end{equation*}
cf. \eqref{eqStabH2}. 
Now it is convenient to introduce two coefficients $\delta_1$ and $\delta_2$, that can be either 0 or 1: all mixed orbits with semisimple part $q^{(2)}$ will then pass trough one and only one of the four following elements:
\begin{equation*}
    q^{(2)}+\delta_1\phi(X_{h_2-h_3})+\delta_2\phi(X_{-2h_1})=\nu (\epsilon^2e_2+\epsilon^3e_3)+ \delta_1 \epsilon^2e_3+  \delta_2 (\epsilon^1)^2\, .
\end{equation*}
The semisimple orbit in $\sp(\CC)$ passing through $ q^{(2)}$ has dimension 14; since both $\ker H$ and $\sll(V_\nu)$ are isomorphic to $\sll_2$, we easily get the following formula for the dimension of the mixed orbits:
\begin{equation*}
   \dim= 14+2(\delta_1+\delta_2)\, .
\end{equation*}

\subsubsection{Mixed orbits with semisimple part of type $(3)$} In this case, $H$ corresponds to $(\nu,\nu,\nu)$: the space   $V_\nu$ has   dimension three and 
\begin{equation*}
    G^H_{\textrm{ss}}=  \SL(V_\nu)\simeq\SL_3
\end{equation*}
acts naturally on 
\begin{equation*}
    \g^H\cap\NN= \NN(\sll(V_\nu))\subseteq\Sp(\CC)\cdot\Span{X_{h_1-h_2},X_{h_1-h_3},X_{h_2-h_3},X_{-h_1+h_2},X_{-h_1+h_3},X_{-h_2+h_3}}\, ,
\end{equation*}
cf. \eqref{eqStabH3}. 
Now, in $\sll(V_\nu)\simeq\sll_3$ there are two nontrivial  nilpotent orbits:  one passing through $ X_{h_1-h_2}$ and the other passing through $X_{h_1-h_2}+X_{h_2-h_3}$, whence, besides $q^{(3)}$ itself, the only other orbits with semisimple part $q^{(3)}$ are
\begin{eqnarray}
    q^{(3)}+\phi(X_{h_1-h_2})&=&\nu (\epsilon^1e_1+\epsilon^2e_2+\epsilon^3e_3 )+\epsilon^1e_2\, ,\label{eqMixQ3six}\\
    q^{(3)}+\phi(X_{h_1-h_2}+X_{h_2-h_3})&=&\nu (\epsilon^1e_1+\epsilon^2e_2+\epsilon^3e_3 )+\epsilon^1e_2 + \epsilon^2e_3\, .\label{eqMixQ3four}
\end{eqnarray}
The aforementioned nilpotent orbits of $\sll_3$ correspond, respectively, to the submaximal (which is the same as the minimal) and to the maximal   nilpotent orbit and, therefore, have dimension 4 and 6, respectively; recalling that in $\sp(\CC)$, the semisimple element $q^{(3)}$ generates a 12--dimensional orbit, the corresponding mixed orbits \eqref{eqMixQ3six} and \eqref{eqMixQ3four} have dimension 16 and 18, respectively.

\subsubsection{Mixed orbits with semisimple part of type $(1)$} In this case, $H$ corresponds to $(0,0,\nu)$: the space   $V_\nu$ has   dimension one, whereas $\ker H$ is four--dimensional; therefore
\begin{equation*}
    G^H_{\textrm{ss}}=  \Sp(\ker H)\simeq\Sp_4
\end{equation*}
acts naturally on 
\begin{equation*}
    \g^H\cap\NN= \NN(\sp(\ker H))\subseteq \Sp(\CC)\cdot\Span{X_{h_1-h_2},X_{h_1+h_2},X_{2h_1},X_{2h_2},X_{-h_1+h_2},X_{-h_1-h_2},X_{-2h_1},X_{-2h_2}}\, ,
\end{equation*}
cf. \eqref{eqStabH1}. 
Now, in $\sp(\ker H)\simeq\sp_4$ there are three nontrivial  nilpotent orbits, passing through, e.g.,  $  X_{h_1-h_2}-X_{2h_2}$, $-\tfrac{1}{2} X_{h_1+h_2}$ and $-X_{2h_1}$, whence, besides $q^{(1)}$ itself, the only other orbits with semisimple part $q^{(1)}$ are
\begin{eqnarray}
    q^{(1)}+\phi(X_{h_1-h_2}-X_{2h_2})&=&\nu    \epsilon^3e_3+ \epsilon^1e_2+e_2^2\, ,\label{eqMixQ1dim18}\\
    q^{(1)}+\phi(-\tfrac{1}{2}X_{h_1+h_2})&=&\nu    \epsilon^3e_3+ e_1e_2\, ,\label{eqMixQ1dim16}\\   q^{(1)}+\phi(-X_{2h_1})&=&\nu    \epsilon^3e_3+ e_1^2\, ,\label{eqMixQ1dim14}
\end{eqnarray}
respectively.\par
The aforementioned nilpotent orbits of $\sp_4$ correspond, respectively, to the  maximal, the submaximal, and to   the minimal   nilpotent orbit and, therefore, have dimension 8, 6 and 4, respectively; since the semisimple element $q^{(1)}$ generates a 10--dimensional orbit in  $\sp(\CC)$, the corresponding mixed orbits \eqref{eqMixQ1dim18}, \eqref{eqMixQ1dim16} and \eqref{eqMixQ1dim14} have dimension 18, 16 and 14, respectively.

\begin{remark}
For any semisimple element $H\in\sp(\CC)$ there is a nilpotent element $X$, commuting with $H$, such that the mixed orbit passing throug $Z=H+X$ has dimension 18; therefore, such a dimension is the highest attainable by all the adjoint orbits in $\sp(\CC)\simeq\sp_6$.
\end{remark}

\section{The moment map $\varpi$ on the space of Monge--Amp\`ere equations}\label{secMomentMap}

We resume here the study of the   space $\p(\Lambda_0^3(\CC))$, whose dual $\p(\Lambda_0^3(\CC^*))$ parametrizes Monge--Amp\`ere equations, that  we have introduced earlier in Section \ref{subsPluckerSpace}. In Section \ref{subsMomMap} we sketch the construction of a  quadratic $\Sp(\CC)$--equivariant moment map $\varpi$ on   the symplectic space $\Lambda_0^3(\CC)$, that has been observed in the first place by N.~Hitchin in \cite{Hitchin3Forms}, in a similar fashion as we did for the canonical identification discussed in Section \ref{subsIdentSpQuadraticForms} above. In the next Section \ref{subsVarpiAndOrbits} we compute the image via $\varpi$ of each of the four orbits of $\p(\Lambda_0^3(\CC))$, as well as all the isomorphism types  of the fibers of $\varpi$. In the last Section \ref{subsRussianContractio} we show that the Hitchin moment map is equivalent to the KLR contraction, cf. \eqref{eq.JGG}, up to the natural identification $\CC\simeq\CC^*$ via the symplectic form.

\subsection{The Hitchin moment map on $\Lambda_0^3(\CC)$}\label{subsMomMap}
By Leibniz's rule, the embedding $j$ given in \eqref{eqEmbedJay} extends to the algebra of polynomial vector fields on $\Lambda_0^3(\CC)$ and it is not hard to see that these are Hamiltonian with respect to $\Omega$, cf. \eqref{eqOmegaBig}: therefore, it must exist, similarly  as before,  a \emph{quadratic} moment map $\widetilde{\mu}$ closing the diagram
\begin{equation}\label{eqDiagS2Lamdba3C}
     \xymatrix{
    S^2(\Lambda_0^3(\CC))\ar[r]^{\mathrm{pr}} & \sp(\CC)^*\\
    \Lambda_0^3(\CC)\ar[ur]_{\widetilde{\mu}}\ar[u]^{v_2} &
    }
\end{equation}
of $\Sp(\CC)$--equivariant polynomial maps. It is worth stressing that only the upper arrow is linear and it is in fact the projection of the 105--dimensional representation $S^2(\Lambda_0^3(\CC))$ onto its unique 21--dimensional irreducible constituent:
\begin{equation*}
S^2(\Lambda_0^3(\CC))= W_{(2,0,0)}\oplus W_{(0,0,2)} =\sp(\CC)^*\oplus W_{(0,0,2)}\, .
\end{equation*}

By identifying $\sp(\CC)^*$ with $S^2(\CC)$, cf. \eqref{eqIdS2CSpC}, we finally obtain the $\Sp(\CC)$--equivariant quadratic map
\begin{equation}\label{eqPhiLambda3CS2C}
    \varphi:=\phi^{\ast\, -1}\circ \widetilde{\mu}: \Lambda_0^3(\CC) \longrightarrow S^2(\CC)
\end{equation}
the whole paper hinges around.\par
Observe that the  map $\varphi$, which is clearly non--surjective, fails also to be injective: in particular, the smallest nonzero $\Sp(\CC)$--orbit in $\Lambda_0^3(\CC)$, that is the cone over the Lagrangian Grassmanian $\LG(3,\CC)$, is mapped to zero; therefore, by projectivizing the above diagram \eqref{eqDiagS2Lamdba3C} and taking into account \eqref{eqPhiLambda3CS2C}, we obtain a commuting diagram of rational maps
 \begin{equation}\label{eqDiagVarpi}
\xymatrix{
\p(\Lambda^3_0(\CC))\ar[r]^{v_2}\ar@{.>}[dr]_{\varpi}& \p(S^2(\Lambda_0^3(\CC)))\ar@{.>}[d]^{[\mathrm{pr}]}\\
&  \p(S^2(\CC)) \,. 
}
\end{equation}
\begin{definition}
The rational map $\varpi$, defined as the projectivization of the composition of the Veronese embedding and the projection onto the first factor or, equivalently, as the projectivization of the moment map on the  space of symplectic 3D Monge--Amp\`ere equations, will be called simply the \emph{moment map}.
\end{definition}


\subsection{Images and fibers of the moment map across the four orbits of $\p(\Lambda^3_0(\CC))$}\label{subsVarpiAndOrbits}
An obvious and yet crucial remark is that the above defined map $\varpi$ is $\Sp(\CC)$--equivariant, as it is a composition of two maps having such property. It is also well known that the natural action of $\Sp(\CC)$ on $\p(\Lambda^3_0(\CC))$ has   precisely four orbits, that  are linearly arranged  with respect to the closure order.\par
Therefore, in order to fully describe $\varpi$, it will suffice to compute the image of chosen representatives of the four orbits of $\p(\Lambda^3_0(\CC))$, and $\varpi(\p(\Lambda^3_0(\CC)))$ is going to be a sum of orbits of $\p(S^2(\CC))$: our main tool to spell out the structure of such sums will be the study of weights.  
%
Concerning the study of fibers, it is worth noticing that, if $[q]=\varpi([\eta])$, then $\varpi^{-1}([q])$ is naturally acted upon by the stabilizer $\Stab_{\Sp(\CC)}([q])$ in a transitive way; in other words, $\varpi^{-1}([q])$ has the structure of a $\Stab_{\Sp(\CC)}([q])$--homogeneous space.

\subsubsection{The orbit structure of $\p(\Lambda^3_0(\CC))$}

The details of the four orbits of $\p(\Lambda^3_0(\CC))$, that we are going to need, are summarized below (recall that symbol $X^\vee$ denotes the projective dual of the variety $X$):
\begin{equation*}
\begin{array}{l|l|l||l}
   \textrm{acronym}  & \textrm{dimension} & \textrm{structure} & \textrm{representative} \\
   \hline
    \textrm{O (as in ``open'')} & 13 &  \p(\Lambda^3_0(\CC))\smallsetminus \LL(3, \CC)^\vee & [e_{123} + e_{456}]\\
    \textrm{L (as in ``linearizable'')} & 12 & \LL(3, \CC)^\vee\smallsetminus\Sing(\LL(3, \CC)^\vee) &  [e_{423} + e_{126} + e_{153} + e_{123}]\\
     \textrm{G (as in ``Goursat'')} & 9 & \Sing(\LL(3, \CC)^\vee)\smallsetminus \LL(3, \CC)\simeq \Gr(3,\CC)/\Z_2&  [e_{163} + e_{125}]\\
     \textrm{P (as in ``parabolic'')} & 6 &   \LL(3, \CC) &  [e_{123}]\\
\end{array}
\end{equation*}
The smallest one (P) is 6--dimensional, smooth and closed: it is  the Lagrangian Grassmanian $\LG(3,\CC)$; it is natural to pick as a representative (the projective class of) the highest weight vector  $[e_{123}]$ of the irreducible representation $W_{(0,0,1)}=\Lambda^3_0(\CC)$, see Remark \ref{remFundWeights}.\par
The projective dual of $\LG(3,\CC)$ is the quartic hypersurface $ \LG(3,\CC)^\vee=\{f=0\}$, with  
\begin{equation}\label{eqEQdefLGDual}
f = (x^{123}x^{456} - \tr(XY))^2 +4x^{123} \det(Y) +4x^{456}\det(X) - 4\sum_{1 \le i,j \le 3} \det \|X_{i,j}\| \det \|Y_{i,j}\|\, ,
\end{equation}
where $X$ and $Y$ are  defined by \eqref{eqDefCoordXY} and the symbol  $A_{i,j}$ denotes the $2\times 2$ block that is complementary to the $i\Th$ row and the $j\Th$ column of the $3\times 3$ matrix $A$.\par 
Its smooth locus $ \LG(3,\CC)^\vee \smallsetminus \Sing(\LG(3,\CC)^\vee)$ is the second biggest orbit (L) and as an its representative we take the projective class $[e_{423} + e_{126} + e_{153} + e_{123}]$. 
%
The biggest orbit is the open one (O), that is $\p(\Lambda^3_0(\CC))\smallsetminus \LG(3,\CC)^\vee$, which is the orbit passing through, e.g., $[e_{123} + e_{456}]$. 
Finally, the singular locus $\Sing(\LG(3,\CC)^\vee)$, that is the variety cut out by the Jacobian ideal of \eqref{eqEQdefLGDual}, is 9--dimensional and $\Sing(\LG(3,\CC)^\vee) \smallsetminus \LG(3,\CC)$ is the orbit (G) represented, e.g., by $[e_{163} + e_{125}]$. 


\subsubsection{(P) The singular locus}
\begin{proposition}\label{propSingLoc}
	The singular locus of $\varpi$ is precisely $\LG(3,\CC)$.
\end{proposition}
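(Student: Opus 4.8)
The plan is to identify the singular locus of the rational map $\varpi$ with its indeterminacy locus, which for a map given by forms of a single degree coincides with the base locus of the defining linear system. Since $\varpi$ factors as in \eqref{eqDiagVarpi} through the Veronese embedding $v_2$ (which is everywhere defined) followed by the projectivized linear projection $[\mathrm{pr}]$, the map $\varpi$ is regular at $[\eta]$ precisely when $\mathrm{pr}(v_2(\eta))\neq 0$, that is, when $\varphi(\eta)\neq 0$, cf. \eqref{eqPhiLambda3CS2C}. Thus the singular locus of $\varpi$ is the projectivization $Z:=\p(\varphi^{-1}(0)\smallsetminus\{0\})$ of the (punctured) zero fibre of the quadratic moment map $\varphi$. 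Because $\varphi$ is an $\Sp(\CC)$--equivariant polynomial map, $Z$ is a closed $\Sp(\CC)$--invariant subvariety of $\p(\Lambda^3_0(\CC))$.

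Next I would exploit the orbit structure. The four orbits of $\p(\Lambda^3_0(\CC))$ recalled in Section~\ref{subsVarpiAndOrbits} are totally ordered by the closure relation, so the only closed $\Sp(\CC)$--invariant subvarieties are $\emptyset$, $\LG(3,\CC)=\mathrm{P}$, $\overline{\mathrm{G}}$, $\overline{\mathrm{L}}$ and the whole space; consequently $Z$ is completely determined by the smallest orbit it fails to contain. One inclusion is already in hand: the cone over $\LG(3,\CC)$ is mapped to zero by $\varphi$, as recalled in Section~\ref{subsMomMap}, so $\mathrm{P}=\LG(3,\CC)\subseteq Z$.

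For the reverse inclusion it then suffices to produce a single point of the next orbit $\mathrm{G}$ at which $\varphi$ does not vanish; by equivariance any representative will do, and I would take $[e_{163}+e_{125}]$ from the table in Section~\ref{subsVarpiAndOrbits}. Evaluating $\varphi$ there directly from its definition as the moment map --- that is, computing the quadratic form $X\mapsto\Omega(X\cdot\eta,\eta)$ (with $X\cdot\eta$ the infinitesimal $\sp(\CC)$--action) on a root basis of $\sp(\CC)$ as in Remark~\ref{remEIJ}, and transporting the result to $S^2(\CC)$ via the isomorphism $\phi$ of \eqref{eqIdentPhi} --- should yield a nonzero, in fact rank--two, quadratic form, in agreement with the Goursat interpretation of this orbit. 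Hence $\mathrm{G}\not\subseteq Z$, so $\overline{\mathrm{G}}\not\subseteq Z$; since $Z$ is a closed invariant set containing $\mathrm{P}$ but not $\mathrm{G}$, the total ordering of the orbit closures forces $Z=\mathrm{P}=\LG(3,\CC)$.

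The main obstacle is precisely the explicit evaluation of the abstractly defined map $\varphi$ at the chosen representative: one must render the quadratic map concrete enough to certify that its value is nonzero. This can be carried out either by the matrix bookkeeping just described or, once it is available, by invoking the equivalence of $\varphi$ with the KLR contraction \eqref{eq.JGG} proved in Theorem~\ref{thFirstOriginalResult}. In either case the only genuine content is a finite computation on one orbit representative, the remainder being the soft equivariance--and--closure argument above.
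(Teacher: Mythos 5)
Your overall strategy is sound and close in spirit to the paper's: both arguments rest on $\Sp(\CC)$--equivariance together with a check on orbit representatives, and both reduce the question to the vanishing locus of the quadratic map $\varphi$. Where you genuinely streamline the paper is the closure argument: since $Z=\p(\varphi^{-1}(0)\smallsetminus\{0\})$ is a closed $\Sp(\CC)$--invariant subvariety and the four orbit closures are totally ordered, it indeed suffices to verify that $\varphi$ vanishes on the orbit P and is nonzero at a single point of G. The paper instead observes that $v_2(e_{123})$ has weight $2h_1+2h_2+2h_3$ and is therefore the highest weight vector of the complementary constituent $W_{(0,0,2)}$, hence is annihilated by $\mathrm{pr}$ in \eqref{eqDiagS2Lamdba3C}, and then defers nonvanishing on the other three orbits to Propositions \ref{propOrbGour}, \ref{propOrbitLin} and \ref{propOpenOrbit}, which it needs anyway for the description of $\varpi$. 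Your route makes the proposition self--contained at the cost of one explicit evaluation.

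Two caveats. First, you justify the inclusion $\LG(3,\CC)\subseteq Z$ by citing Section \ref{subsMomMap}; but the assertion there that the cone over $\LG(3,\CC)$ is mapped to zero is itself only established by the present proposition, so as written your argument is circular on this point. You must actually supply the verification: either the weight argument just described, or the direct computation $q(e_{123})=0$ that appears at the end of the proof of Theorem \ref{thFirstOriginalResult}. This is easy, but it is not free. Second, a small factual slip: the value of $\varphi$ at the representative $[e_{163}+e_{125}]$ of the orbit G is proportional to $e_1^2$, a \emph{rank--one} quadratic form (the class of $X_{2h_1}$ in $\OO_{[2,1^4]}$), not a rank--two one; you are likely conflating it with the rank of the symbol $\widetilde{\eta}$ on that orbit, which is $2$. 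This does not affect your argument, since all you need is that the value is nonzero.
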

\begin{proof}
	 Recall that we have chosen, as a  representative for the smallest orbit, the element $[e_{123}]$: the vector $e_{123}$ has weight $h_1 + h_2 +h_3$ and the image of $[e_{123}]$ under the Veronese embedding corresponds to a vector of weight $2h_1 +2h_2 + 2h_3$; the latter  is nothing but the  highest weight vector of the 84--dimensional constitutent  $W_{(0,0,2)}$ that is annihilated by the affine projection $\mathrm{pr}$ in \eqref{eqDiagS2Lamdba3C}. This shows that the map $\varpi$ is not defined on $\LG(3,6)$ and since in the next propositions we will find the images of remaining  orbits, it suffices to prove the claim.
\end{proof}

\subsubsection{(G) Image of the 9--dimensional orbit}
It will be useful to recall that the minimal projective embedding  $\LG(2,V^\prime)\subset\p(\Lambda_0^2(V^\prime))$, where $V^\prime$ is a 4--dimensional symplectic space, has only two orbits: the 3--dimensional (smooth, closed) Lagrangian Grassmanian  $\LG(2,V^\prime)=\LG(2,4)$ itself and its (open) complement in $\p(\Lambda_0^2(V^\prime))=\p^4$; one way to see this is the Dynkin diagram identification $\mathsf{C}_2\equiv\mathsf{B}_2$:   it follows that the $\Sp(4)$--action on $\p^4$ coincides with the $\SO(5)$--action and then there is only one invariant quadric.
\begin{proposition}\label{propOrbGour}
	The image of $\Sing(\LG(3,\CC)^\vee) \setminus \LG(3,\CC)$ via $\varpi$ is the (5--dimensional, contact) adjoint variety $\p(\OO_{[2,1^4]})$ of $\Sp(\CC)$, see Section \ref{subAdjVar}. The fiber of $\varpi$ restricted to $\Sing(\LG(3,\CC)^\vee) \setminus \LG(3,\CC)$ is isomorphic to $\p^4 \setminus \LG(2,4)$. The ``boundary'' of the compactification of the fiber, i.e.,  $\LG(2,4)$, is a subset of $\LG(3,\CC)$.
\end{proposition}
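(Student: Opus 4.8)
The plan is to treat the two assertions—identification of the image and of the fibre—separately, using throughout that $\varpi$ is $\Sp(\CC)$--equivariant and that both the source orbit (G) and its image are single $\Sp(\CC)$--orbits. For the image I would argue by weights, avoiding any explicit evaluation of the moment map. Taking the representative $\eta_0=e_{163}+e_{125}=e_1\wedge(e_6\wedge e_3+e_2\wedge e_5)$, one reads off from $e_i\mapsto\pm h_j$ that $e_{163}$ and $e_{125}$ both have weight $h_1$, so $\eta_0$ is a weight vector of weight $h_1$; its image under the Veronese embedding in \eqref{eqDiagVarpi} therefore has weight $2h_1$, the highest root of $\sp(\CC)$. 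Since the projection $\mathrm{pr}$ in \eqref{eqDiagS2Lamdba3C} is weight-preserving and $[\eta_0]\notin\LG(3,\CC)$—so that $\varpi$ is defined at $[\eta_0]$ by Proposition \ref{propSingLoc}—the image $\varpi([\eta_0])$ is a nonzero vector of weight $2h_1$ in $\sp(\CC)^*\simeq S^2(\CC)$. As the $2h_1$--weight space of the adjoint representation is one-dimensional and $2h_1$ is its highest weight, this vector is a multiple of the highest weight vector $X_{2h_1}$, i.e. $[q_{[2,1^4]}]$; its orbit is the minimal nilpotent orbit, whose projectivisation is the adjoint variety $\p(\OO_{[2,1^4]})$. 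Equivariance then gives $\varpi\big((\mathrm{G})\big)=\Sp(\CC)\cdot[q_{[2,1^4]}]=\p(\OO_{[2,1^4]})$.

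For the fibre I would first pin down $\Stab_{\Sp(\CC)}([q])$ for $[q]=[q_{[2,1^4]}]$. Since the adjoint variety of $\Sp(\CC)$ is the image of $\p(\CC)$ under $\ell\mapsto[\ell^2]$, the stabiliser of $[q]=[v^2]$, $v=e_1$, coincides with the stabiliser of the line $\langle v\rangle$—a parabolic $P$ with Levi $\GL(\langle v\rangle)\times\Sp(V')=\GL_1\times\Sp_4$, where $V':=v^\perp/\langle v\rangle$ is the $4$--dimensional symplectic space spanned by $e_2,e_3,e_5,e_6$. By equivariance and transitivity of $\Sp(\CC)$ on (G), any two points of $\varpi^{-1}([q])\cap(\mathrm{G})$ differ by an element fixing $[q]$, so this part of the fibre is exactly the $P$--orbit of $[\eta_0]$. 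Now $\eta_0=v\wedge\beta_0$ with $\beta_0=e_2\wedge e_5-e_3\wedge e_6\in\Lambda^2_0(V')$, and a direct check shows that $e_1\wedge\beta$ is effective precisely when $\beta\in\Lambda^2_0(V')$. Writing $P=(\GL_1\times\Sp(V'))\ltimes U$, the $\GL_1$ scales $v$ (projectively trivial), the unipotent radical $U$ sends $V'$--vectors to themselves modulo $\langle v\rangle$ and hence acts trivially on $[v\wedge\beta]$, while $\Sp(V')$ acts by $\beta\mapsto A\beta$. Thus the $P$--orbit of $[\eta_0]$ is carried isomorphically onto the $\Sp(V')$--orbit of $[\beta_0]$ inside $\p(\Lambda^2_0(V'))=\p^4$. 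As $\beta_0$ is non-decomposable and $\Sp_4$ acts on $\p^4$ with only the two orbits $\LG(2,4)$ and its open complement (via $\mathsf{C}_2\equiv\mathsf{B}_2$), this orbit is $\p^4\setminus\LG(2,4)$, the claimed fibre.

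Finally, the boundary statement follows by extending $[\beta]\mapsto[v\wedge\beta]$ across $\LG(2,V')$: a decomposable $\beta=w_1\wedge w_2$ with $[\beta]\in\LG(2,V')$ gives the totally decomposable $v\wedge w_1\wedge w_2$ spanning an isotropic $3$--plane—the lift of the Lagrangian plane of $V'$ together with $v$—i.e. a point of $\LG(3,\CC)$, the closed orbit (P). I expect the main obstacle to be the bookkeeping in the second paragraph: verifying cleanly that the induced $P$--action on the $3$--forms $[v\wedge\beta]$ factors through $\Sp(V')$ acting on $[\beta]$, in particular the triviality of $U$ and of the $\GL_1$ scaling, together with the effectivity correspondence $\Lambda^2_0(V')\leftrightarrow\{\,e_1\wedge\beta\ \text{effective}\,\}$; once these are in place, both the fibre and its compactification drop out formally.
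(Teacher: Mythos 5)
Your treatment of the fibre and of its boundary is correct and follows essentially the paper's own route: the paper likewise reduces to the action of $\Sp(V')$ on $\p(\Lambda^2_0(V'))\simeq\p^4$ via wedge--multiplication by $e_1$, invokes the two--orbit structure coming from $\mathsf{C}_2\equiv\mathsf{B}_2$, and concludes from the non--decomposability of $e_{63}+e_{25}$. If anything your bookkeeping is more careful than the paper's, which asserts that $\Stab_{\Sp(\CC)}([E(1,4)])$ ``is isomorphic to $\Sp(4)$'', whereas you correctly identify it as the parabolic stabilising the line $\Span{e_1}$ and then verify that the unipotent radical and the $\GL_1$--factor act trivially on classes of the form $[e_1\wedge\beta]$, so that only the Levi factor $\Sp(V')$ matters.

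The image computation, however, has a genuine gap. The weight argument only shows that $\varpi([\eta_0])$, \emph{if nonzero}, is a multiple of $X_{2h_1}$: the weight space $\big(S^2(\Lambda^3_0(\CC))\big)_{2h_1}$ is three--dimensional, splitting as a one--dimensional piece inside $W_{(2,0,0)}$ and a two--dimensional piece inside $W_{(0,0,2)}$, and the specific vector $(e_{163}+e_{125})^2$ could a priori lie entirely in the second summand, in which case $\mathrm{pr}$ would annihilate it. You rule this out by citing Proposition \ref{propSingLoc}, but that is circular: the paper's proof of Proposition \ref{propSingLoc} establishes only that $\varpi$ is undefined \emph{on} $\LG(3,\CC)$ and explicitly defers the claim that it is defined everywhere else to ``the next propositions'', of which this is one. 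The paper closes the gap by an explicit computation: it determines the highest weight vector $w_{(2,0,0)}=(e_{163}+e_{125})^2-4e_{123}e_{156}+4e_{153}e_{126}$ of $W_{(2,0,0)}$ inside the $2h_1$--weight space (characterised by being annihilated by the positive root vectors) and observes that $(e_{163}+e_{125})^2$ has a nonzero component along it. Some such check is unavoidable; your stated plan of ``avoiding any explicit evaluation of the moment map'' cannot be carried through at exactly this point.
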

\begin{proof}
	Since the vector corresponding to the chosen representative $[e_{163} +e_{125}]$ has weight $h_1$, its square has weight $2h_1$. There are two other elements in $S^2\big(\Lambda^3_0(\CC)\big)$ with this weight, namely  $e_{123}\cdot e_{156}$ and $e_{153}\cdot e_{126}$ and some combination of these three vectors is the highest weight vector $w_{(2,0,0)}$ of $W_{(2,0,0)}=S^2(\CC) \subset S^2 \big(\Lambda^3_0(\CC)\big)$. 
	To determine $w_{(2,0,0)}$ recall that it is annihilated by all primitive positive root vectors. On the other hand, the images of elements with weights $2h_1 + 2h_3$ and $2h_1 +h_2 -h_3$ via negative root vectors belong to $W_{(0,0,2)} \subset S^2 \big(\Lambda^3_0(\CC)\big)$ (i.e., to the complement). Direct computations show that   $w_{(2,0,0)}=(e_{163}+e_{125})^2 -4e_{123}e_{156} +4e_{153}e_{126}$  and then the 3--dimensional weight space $\large( S^2 \big(\Lambda^3_0(\CC)\big) \large)_{2h_1}$  splits as
	\begin{equation*}
	    \large( S^2 \big(\Lambda^3_0(\CC)\big) \large)_{2h_1}=\large(W_{(2,0,0)} \large)_{2h_1}\oplus \large(W_{(0,0,2)} \large)_{2h_1} =\Span{w_{(2,0,0)}}\oplus\Span{e_{123}e_{156} +e_{153}e_{126}, (e_{163}+e_{125})^2 + 2e_{126}e_{153}}\, .
	\end{equation*}
    Since $(e_{163} +e_{125})^2$ has nonzero component along $w_{(2,0,0)}$, it projects nontrivially onto $(W_{(2,0,0)})_{2h_1}$ and this subspace is spanned by the matrix $E(1,4)$, that is $X_{2h_1}$, which  clearly belongs to $\OO_{[2,1^4]}$, see Remark \ref{remEIJ}.\par
    
	As for the second statement, recall that the  fiber of $\varpi$  that passes  through $[e_{163} + e_{125}]$ is a homogeneous space of the subgroup $\Stab_{\Sp(\CC)}([E(1,4)])$ acting on $\p(\Lambda^3_0(\CC))$.
	
	It is now convenient to introduce the symplectic space    $V' = \Span{e_2, e_3, e_5, e_6}$: indeed,   every element of $\Stab_{\Sp(\CC)}([E(1,4)])$ leaves   $e_1$ fixed  and only $e_4$ gets sent to $e_4$, so that   the stabilizer is isomorphic to  $  \Sp(V^{\prime})\simeq\Sp(4)$. More precisely, the wedge--multiplication by $e_1$ realizes an embedding of $\p(\Lambda^2_0(V'))$ into $\p(\Lambda^3_0(\CC))$ that commutes with the action of $\Stab_{\Sp(\CC)}([E(1,4)])$, understood as a subgroup of $\Sp(\CC)$: this reduces   our problem  to that of finding the $\Sp(V^\prime)$--orbit of the class of the two--form $e_{63} + e_{25}$ living in $\p(\Lambda^2_0(V'))$. The second claim follows then from the fact that $e_{63} + e_{25}$ is it not a decomposable two--form, i.e.,  $[e_{63} + e_{25}] \not\in \LG(2,V')$. Finally, the wedge multiplication by $e_1$ maps the Lagrangian subspaces of $V'$ to Lagrangian subspaces of $\CC$, and they belong to $\LG(3,\CC)$. 
\end{proof}

\subsubsection{(L) Image of the twelve--dimensional orbit}
We introduce now a special projective line in $\p(\Lambda^3_0(\CC))$:
\begin{equation*}
    \p^1:=\overline{\{[e_{423} + e_{126} + e_{153} + k \cdot e_{123}]\mid k\in\C\}}\ ,
\end{equation*}
understood as the compactification of its (one--dimensional) affine neighborhoud
\begin{equation*}
    \C^1:= \{[e_{423} + e_{126} + e_{153} + k \cdot e_{123}]\mid k\in\C\}\, ,
\end{equation*}
by means of the ``point at infinity'' $[e_{123}]$.
\begin{proposition}\label{propOrbitLin}
	The image of  $\LG(3,\CC)^\vee \setminus \Sing(\LG(3,\CC)^\vee)$  via $\varpi$ is $\p(\OO_{[2^3]})$, which has dimension 11. The fiber of $\varpi$ over $\LG(3,\CC)^\vee \setminus \Sing(\LG(3,\CC)^\vee)$ is isomorphic to  $\C^1$, and can be compactified to $\p^1$ by a point lying in $\LG(3,\CC)$.
\end{proposition}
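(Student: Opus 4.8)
The plan is to exploit the $\Sp(\CC)$--equivariance of $\varpi$, exactly as in the proof of Proposition~\ref{propOrbGour}: it suffices to evaluate $\varphi$ (the Hitchin moment map of \eqref{eqPhiLambda3CS2C}) on a single representative of the orbit L and to identify the resulting adjoint orbit by weights, and then to realize the fiber as a homogeneous space of the stabilizer of the image point. The representative I would use is $\eta_0 := e_{423}+e_{153}+e_{126}$, i.e.\ the $k=0$ member of the affine line $\C^1$; note that the official representative $\eta_1=\eta_0+e_{123}$ differs from it by a vector of $\LG(3,\CC)$, so the two will turn out to have the same image. Throughout I write $B(\alpha,\beta)=\tfrac12\bigl(\varphi(\alpha+\beta)-\varphi(\alpha)-\varphi(\beta)\bigr)$ for the polarization of the quadratic map $\varphi$.

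For the image I would argue as follows. Each of $e_{423}$, $e_{153}$, $e_{126}$ spans a Lagrangian subspace of $\CC$ (for instance $\langle e_4,e_2,e_3\rangle=\langle\epsilon^1,e_2,e_3\rangle$ is isotropic for $\omega=\epsilon^i\wedge e_i$), hence lies in the cone over $\LG(3,\CC)$, which by Proposition~\ref{propSingLoc} is the null cone of $\varphi$; therefore the three ``diagonal'' contributions $\varphi(e_{423})$, $\varphi(e_{153})$, $\varphi(e_{126})$ vanish and $\varphi(\eta_0)=2\bigl(B(e_{423},e_{153})+B(e_{423},e_{126})+B(e_{153},e_{126})\bigr)$. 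The three cross terms carry definite weights $2h_3$, $2h_2$, $2h_1$ respectively, and since each of the weight spaces $(S^2\CC)_{2h_i}$ is one--dimensional, spanned by $e_i^2$, we get $\varphi(\eta_0)=2(c_3e_3^2+c_2e_2^2+c_1e_1^2)$. The cyclic permutation $e_1\to e_2\to e_3$, $\epsilon^1\to\epsilon^2\to\epsilon^3$ is a genuine element of $\Sp(\CC)$ that permutes $e_{423},e_{153},e_{126}$ cyclically, hence fixes $\eta_0$; by equivariance $\varphi(\eta_0)$ is invariant under it, forcing $c_1=c_2=c_3$. Thus $\varphi(\eta_0)$ is proportional to $e_1^2+e_2^2+e_3^2=q_{[2^3]}$, so its image lies in $\p(\OO_{[2^3]})$, whose dimension is $12-1=11$. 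Nonvanishing of the common coefficient (hence that the image is exactly $\p(\OO_{[2^3]})$ rather than a point of $\LG(3,\CC)$) I would confirm by a single explicit contraction, or by checking via \eqref{eqEQdefLGDual} and its Jacobian that $[\eta_0]$ sits in the smooth locus of $\{f=0\}$, i.e.\ in orbit L, which gives $\varphi(\eta_0)\neq0$ again by Proposition~\ref{propSingLoc}.

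For the fiber, the key point is that the whole line $\C^1=\{[\eta_0+k\,e_{123}]\mid k\in\C\}$ collapses to one point. Indeed
\begin{equation*}
\varphi(\eta_0+k\,e_{123})=\varphi(\eta_0)+2k\,B(\eta_0,e_{123})+k^2\,\varphi(e_{123}),
\end{equation*}
where $\varphi(e_{123})=0$ (again Proposition~\ref{propSingLoc}, as $\langle e_1,e_2,e_3\rangle$ is Lagrangian) and $B(\eta_0,e_{123})=0$ by weights: the three relevant cross terms would carry the weights $2h_i+2h_j$, none of which occurs in $S^2(\CC)$. Hence $\C^1\subseteq\varpi|_L^{-1}([q_{[2^3]}])$. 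Since $\varpi|_L$ is an equivariant surjection from the $12$--dimensional orbit L onto the $11$--dimensional homogeneous space $\p(\OO_{[2^3]})$, its fibers are $1$--dimensional, and each fiber is acted on transitively by $\Stab_{\Sp(\CC)}([q_{[2^3]}])$, hence irreducible; therefore it coincides with $\C^1$. Finally, $\overline{\C^1}=\p^1$ is obtained by adjoining the limit $\lim_{k\to\infty}[\tfrac1k\eta_0+e_{123}]=[e_{123}]$, and $[e_{123}]\in\LG(3,\CC)$ is the announced boundary point lying in the parabolic orbit.

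The routine parts are the two weight bookkeepings, which are light. The genuine obstacles I anticipate are exactly the two nonformal points flagged above: certifying that the common coefficient $c$ is nonzero (equivalently, that $[\eta_0]$ is in orbit L and not in $\LG(3,\CC)$), and pinning the fiber down to be \emph{exactly} $\C^1$ — for the latter I rely on the equidimensionality coming from equivariance together with the transitivity/irreducibility of the fiber under $\Stab_{\Sp(\CC)}([q_{[2^3]}])$, so the delicate step is to make the transitivity claim rigorous, for which identifying the stabilizer explicitly (its Levi $\GL_3$--type factor acting on the relevant line) would be the safe route.
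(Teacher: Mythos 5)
Your proposal follows the same overall strategy as the paper's proof: exploit $\Sp(\CC)$--equivariance, compute the projection of the square of a representative onto $W_{(2,0,0)}$ by weight bookkeeping, and recognize the fiber as a homogeneous space of $\Stab_{\Sp(\CC)}([q_{[2^3]}])$ of the correct dimension. Within that strategy you make one genuine local improvement: the paper pins down the three cross terms by explicitly decomposing each weight space $\big(S^2(\Lambda^3_0(\CC))\big)_{2h_i}$ into its $W_{(2,0,0)}$-- and $W_{(0,0,2)}$--components (e.g.\ via $(e_{623}+e_{124})^2-4e_{123}e_{426}+4e_{126}e_{423}$ for weight $2h_2$), whereas your cyclic--symmetry argument forces $c_1=c_2=c_3$ at once and reduces everything to a single nonvanishing check. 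Your handling of the $k$--dependence via the polarization $B(\eta_0,e_{123})=0$ is equivalent to the paper's observation that the terms $2h_i+2h_j$ are killed by the projection.

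Two caveats. First, your fallback for the nonvanishing of $c$ is circular: Proposition \ref{propSingLoc} asserts that the singular locus of $\varpi$ is \emph{precisely} $\LG(3,\CC)$, but its proof only establishes the inclusion $\LG(3,\CC)\subseteq\Sing(\varpi)$ directly and explicitly defers the reverse inclusion to the subsequent propositions, including the present one; so you cannot invoke it to deduce $\varphi(\eta_0)\neq 0$ from $[\eta_0]\notin\LG(3,\CC)$, and the explicit contraction (or the paper's weight--space decomposition) is mandatory. (Your use of the \emph{direct} half of that proposition to kill the pure squares and $\varphi(e_{123})$ is legitimate.) Second, ``acted on transitively by the stabilizer, hence irreducible'' requires the stabilizer to be connected; you correctly flag this as the delicate step, and the paper supplies the missing computation, exhibiting $\Stab([E(1,4)+E(2,5)+E(3,6)])$ as the block upper--triangular matrices with diagonal blocks $A$ and $aA$ subject to $aAA^t=I_3$ and $A^tB=B^tA$, and observing that the free projective scalar $a$ lets $\det A$ range over all of $\C^*$, whence connectedness. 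With these two points filled in as you indicate, your argument is complete and coincides in substance with the paper's.
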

\begin{proof}
	Consider the square of $e_{423} + e_{126} + e_{153} + k \cdot e_{123}$. It can be decomposed into the sum of elements with weights $\pm 2h_1 \pm 2h_2 \pm 2h_3$ (pure squares), $2h_i +2h_j$ (elements with $k\cdot e_{123}$) and $2e_{126}e_{153} + 2e_{423}e_{126} + 2e_{423}e_{153}$. The projection onto $W_{(2,0,0)}$ kills the first two sets  of elements, and as in  Proposition \ref{propOrbGour}, we can check that  the three last elements project nontrivially onto the corresponding weight subspaces in $\sp(\CC)$: they are spanned by $(e_{623}+e_{124})^2 -4 e_{123}e_{426} +4e_{126}e_{423}$ for $2h_2$ and $(e_{523}+e_{143})^2 -4e_{123}e_{453} +4e_{423}e_{153}$ for $2h_3$. Therefore, the image of the chosen representative does not depend on $k$ and is equal to the class of $E(1,4) + E(2,5)+ E(3,6)$ and such matrices live in $\p(\OO_{[2^3]})$. \par
	In other words,  we have just shown that the fiber contains $\C^1$; since the ``point at infinity'' $[e_{123}]$ lies in  $\LG(3,\CC)$, the   closure $\p^1$ of $\C^1$ is no longer a subset of the fiber.\par
	Now, to show that there are no other components than $\C^1$, we consider the stabilizer subgroup \linebreak $\Stab_{\Sp(\CC)}([E(1,4) + E(2,5) + E(3,6)]) \subset  \Sp(\CC)$, and recall that it acts transitively on the fiber: we will prove the connectedness of the fiber by showing that the stabilizer subgroup is connected. To this end, observe that  
	\begin{equation*}
    \Stab([E(1,4) + E(2,5) + E(3,6)])=\left\{  \left( \begin{array}{cc}
        A & B \\
        0 & a\cdot A
    \end{array} \right)\mid a \in \C^*, a\cdot A \cdot A^t = I_3, A^t \cdot B = B^t \cdot A   \right\} \,  .
\end{equation*}
    Since we want to stabilize the projective class of an element we gain an additional parameter: the scalar $a$. Thanks to it, the determinant of $A$ can be any nonzero complex number, therefore the stabilizer subgroup is connected, and that finishes the proof.
\end{proof}

\subsubsection{(O) Image of the open orbit}\label{subsecImageOpenOrbit}

As before, we introduce  a special projective line in $\p(\Lambda^3_0(\CC))$:
\begin{equation*}
    \p^1:=\overline{\{[e_{123}+ k\cdot e_{456}]\mid k\in\C\}}\ ,
\end{equation*}
understood this time as the compactification of 
\begin{equation}\label{eqCiStar}
    \C^\times:= \{[e_{123}+ k\cdot e_{456}]\mid k\in\C\smallsetminus\{0\}\}\, ,
\end{equation}
by means of the ``point at infinity'' $[e_{456}]$, as well as well as the ``zero point'' $[e_{123}]$.
\begin{proposition}\label{propOpenOrbit}
	The image of  $\p(\Lambda^3_0(\CC))\smallsetminus \LG(3,\CC)^\vee$ via $\varpi$ is the projectivization
	\begin{equation*}
	   \p\left( \bigcup_{\nu\in\C\smallsetminus\{0\}}\Sp(\CC)\cdot \nu \cdot (\epsilon^1e_1+\epsilon^2e_2+\epsilon^3e_3)\right)
	\end{equation*}
	of the sum of a 1--parameter family of 12--dimensional orbits of type $q^{(3)}$. The fiber of $\varpi$, restricted to \linebreak $\p(\Lambda^3_0(\CC))\smallsetminus \LG(3,\CC)^\vee$, is equal to $\C^\times$ and can be compactified by two points that belong to $\LG(3,\CC)$.
\end{proposition}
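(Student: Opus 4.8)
The plan is to follow the weight--theoretic template already used in Propositions \ref{propOrbGour} and \ref{propOrbitLin}, now applied to the representative $\eta_k := e_{123} + k\, e_{456}$ of the open orbit (O), with $k \in \C\setminus\{0\}$. First I would form the Veronese image $\eta_k^2 = e_{123}^2 + 2k\, e_{123}e_{456} + k^2\, e_{456}^2$ in $S^2(\Lambda_0^3(\CC))$ and record the three weights involved: $e_{123}$ has weight $h_1+h_2+h_3$ and, since $e_4=\epsilon^1,e_5=\epsilon^2,e_6=\epsilon^3$, the vector $e_{456}$ has weight $-(h_1+h_2+h_3)$. Thus the two pure squares carry the extreme weights $\pm 2(h_1+h_2+h_3)$, whereas the cross term $e_{123}e_{456}$ has weight $0$. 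Because $\pm 2(h_1+h_2+h_3)$ are not roots of $\sp(\CC)$, these weights do not occur in $W_{(2,0,0)}=\sp(\CC)^*$; together with Remark \ref{remFundWeights}, which identifies $e_{123}^2$ as the highest weight vector of $W_{(0,0,2)}$, this shows that both pure squares lie in the complement $W_{(0,0,2)}$ and are annihilated by the projection $\mathrm{pr}$ in diagram \eqref{eqDiagS2Lamdba3C}.

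Next I would pin down the surviving contribution $\mathrm{pr}(e_{123}e_{456})$. Having weight $0$, it must lie in the Cartan subalgebra $\gh$, the weight--zero space of $\sp(\CC)\cong S^2(\CC)$. The permutation subgroup $S_3\subset\Sp(\CC)$ simultaneously permuting the indices $\{1,2,3\}$ and $\{4,5,6\}$ fixes $e_{123}e_{456}$ (the two Koszul signs cancel) and acts on $\gh$ by permuting its diagonal entries; hence $\mathrm{pr}(e_{123}e_{456})$ is $S_3$--invariant, which forces it to be a multiple of $\epsilon^1e_1+\epsilon^2e_2+\epsilon^3e_3=q^{(3)}$. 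That the multiple is nonzero follows at once from Proposition \ref{propSingLoc}: the indeterminacy locus of $\varpi$ is exactly $\LG(3,\CC)$, which is disjoint from (O), so $\varpi$ is regular on (O) and the projection $\mathrm{pr}(\eta_k^2)=2k\,\mathrm{pr}(e_{123}e_{456})$ cannot vanish. Therefore $\varpi([\eta_k])=[q^{(3)}]$, and by $\Sp(\CC)$--equivariance the image of the single orbit (O) is the single orbit $\p(\Sp(\CC)\cdot q^{(3)})$ of type $(3)$, of dimension $12$; this is precisely the projectivization of the affine cone $\bigcup_{\nu\neq 0}\Sp(\CC)\cdot \nu(\epsilon^1e_1+\epsilon^2e_2+\epsilon^3e_3)$ in the statement, since projectivizing makes the scale $\nu$ invisible.

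For the fiber I would invoke the homogeneity principle recalled in Section \ref{subsVarpiAndOrbits}: as (O) is a single orbit mapping onto the single orbit $\p(\Sp(\CC)\cdot q^{(3)})$, the fiber inside (O) is exactly one orbit of $\Stab_{\Sp(\CC)}([q^{(3)}])$ through $[\eta_1]$. The Levi $\GL_3\subset\Sp(\CC)$ stabilizing the eigenspace decomposition of $q^{(3)}$ consists of the block--diagonal elements $\diag(A,(A^t)^{-1})$, which act by $e_{123}\mapsto \det(A)\,e_{123}$ and $e_{456}\mapsto \det(A)^{-1}e_{456}$, whence $[\eta_k]\mapsto [\eta_{k\det(A)^{-2}}]$; since $\det(A)$ ranges over $\C\setminus\{0\}$ and squaring is onto there, this $\GL_3$--orbit is precisely the set $\C^\times$ of \eqref{eqCiStar}. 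The remaining (total Legendre) component of the stabilizer, coming from \eqref{eq:legendre.in.un.punto}, only exchanges $e_{123}\leftrightarrow e_{456}$, i.e. sends $k\mapsto 1/k$, and thus preserves $\C^\times$. Hence the fiber equals $\C^\times$. Finally, its closure $\p^1$ adds the two points $[e_{123}]$ and $[e_{456}]$, which correspond to the Lagrangian subspaces $V$ and $V^*$ and therefore lie in $\LG(3,\CC)$, the locus where $\varpi$ is undefined.

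The only genuinely delicate step is the identification $\mathrm{pr}(e_{123}e_{456})\propto q^{(3)}$ together with its nonvanishing: once the weight bookkeeping isolates the weight--zero cross term and the $S_3$--symmetry collapses $\gh$ to the line $\Span{q^{(3)}}$, the appeal to Proposition \ref{propSingLoc} removes the last degree of freedom without ever computing the projection constant explicitly. Everything else is routine equivariance and the dimension count $\dim(\mathrm{O})-\dim\C^\times=13-1=12=\dim\p(\Sp(\CC)\cdot q^{(3)})$, which serves as a consistency check.
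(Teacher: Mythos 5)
Your overall strategy---isolating the weight--zero cross term $e_{123}e_{456}$, killing the pure squares by observing that $\pm2(h_1+h_2+h_3)$ is not a weight of $W_{(2,0,0)}$, and computing the fiber as a single orbit of $\Stab_{\Sp(\CC)}([q^{(3)}])$---is the same as the paper's, and your treatment of the fiber (the $\GL_3$ Levi acting by $k\mapsto k\det(A)^{-2}$ together with the Legendre component sending $k\mapsto 1/k$) is a correct and slightly more explicit version of the paper's computation of $\Stab([h_1+h_2+h_3])$. Your $S_3$--invariance argument forcing $\mathrm{pr}(e_{123}e_{456})$ onto the line $\Span{h_1+h_2+h_3}\subset\gh$ is a genuine, and arguably cleaner, alternative to the paper's route, which instead acts with the root vectors $X_{2h_i}$ and $X_{h_i-h_j}$ and uses equivariance to deduce that the three Cartan coefficients are equal and nonzero.

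However, the one step you yourself flag as delicate---the nonvanishing of $\mathrm{pr}(e_{123}e_{456})$---is exactly where your argument breaks: the appeal to Proposition \ref{propSingLoc} is circular. The paper's proof of that proposition only establishes directly that $\LG(3,\CC)$ is contained in the indeterminacy locus, and explicitly defers the reverse inclusion to ``the next propositions'', i.e.\ to Propositions \ref{propOrbGour}, \ref{propOrbitLin} and \ref{propOpenOrbit} themselves; you therefore cannot invoke it to conclude that $\varpi$ is regular on the open orbit. The gap is localized and easily repaired without touching the rest of your proof: either reproduce the paper's computation ($X_{2h_1}\cdot e_{123}e_{456}=e_{123}e_{156}$ projects nontrivially onto $(W_{(2,0,0)})_{2h_1}$ by the explicit splitting of that weight space obtained in Proposition \ref{propOrbGour}, so by equivariance the $h_1$--coefficient of $\mathrm{pr}(e_{123}e_{456})$ is nonzero), or argue directly that the indeterminacy locus of $\varpi$ is a closed $\Sp(\CC)$--invariant \emph{proper} subvariety (proper because, by polarization, $\mathrm{pr}\circ v_2\equiv 0$ would force $\mathrm{pr}\equiv 0$ on all of $S^2(\Lambda_0^3(\CC))$) and hence misses the open dense orbit. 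Either way the missing nonzero constant is supplied and the remainder of your argument goes through.
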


\begin{proof}
	Consider the element $e_{123}+ k\cdot e_{456}$, whose projective class belongs to the open orbit for all $k \ne 0$. Its square is equal to $e_{123}^2 +k^2 e_{456}^2 +2k e_{123}e_{456}$ and the weights of its two first summands clearly indicate that they belong to $W_{(0,0,2)}$. The mixed term has weight 0, so if it projects nontrivially onto $\sp(\CC)$ then it lives in the Cartan subalgebra. We claim that in fact the image of $[e_{123} + ke_{456}]$ is precisely $[h_1 +h_2 +h_3] = [\diag(1,1,1,-1,-1,-1)]$. To  this end, we can compute how the weight vectors $X_{2h_i}$ act upon $2k e_{123}e_{456}$, namely $X_{2h_1}\cdot 2k e_{123}e_{456} = 2ke_{123}e_{156}$, $X_{2h_2}\cdot 2k e_{123}e_{456} = 2ke_{123}e_{426}$ and $X_{2h_3}\cdot 2k e_{123}e_{456} = 2ke_{123}e_{453}$. Since all three are nonzero, it follows that $[2ke_{123}e_{456}] \mapsto [ah_1 +bh_2 +ch_3]$ for $a,b,c \ne 0$, and moreover $a=b$ because $X_{h_2 -h_1} \cdot X_{2h_1}\cdot 2k e_{123}e_{456} = X_{h_1 -h_2} \cdot X_{2h_2}\cdot 2k e_{123}e_{456}$. In the same manner we can show that $b=c$. \par
	As before, what we have shown so far ensures that the fiber contains $\C^\times$ and we see that the points $0$ and $\infty$ lie in $\LG(3,6)$. Again, let us consider the stabilizer acting transitively on the fiber over $[h_1 + h_2 + h_3]$:
\begin{equation*}
    \Stab([h_1 +h_2 +h_3])=\left\{  \left( \begin{array}{cc}
        A & B \\
        C & D
    \end{array} \right) \in \Sp(6) \mid  \left( \begin{array}{cc}
        A & -B \\
        C & -D
    \end{array} \right) = \left( \begin{array}{cc}
        aA & aB \\
        -aC & -aD
    \end{array} \right), a \in \C^* \right\} \,  .
\end{equation*}
    Since now $a$ can only take values  $\pm 1$,  the stabilizer has two connected components: $B=C=0$ and $A=D=0$, so we cannot conclude as before. However, it is straightforward that for any $g$ belonging to the stabilizer, $g \cdot [e_{123} +e_{456}] = [(\det(A) + \det(B)) e_{123} + (\det(C) +\det(D))e_{456}]$, so that there are no other components. 
\end{proof}


\subsubsection{Summary of the results}\label{subListNormalForms}

\begin{equation*}
\begin{array}{l|l||l|l|l||l}
   \textrm{orbit}  & \textrm{dimension}   & \textrm{representative} & \textrm{image of the representative}& \textrm{fiber} & \textrm{PDE}\\
   \hline
    \textrm{O} & 13   & [e_{123} + e_{456}] & [\epsilon^1e_1+\epsilon^2e_2+\epsilon^3e_3]& \C^\times&\det\|u_{ij}\|=1\\
    \textrm{L} & 12   &  [e_{423} + e_{126} + e_{153} + e_{123}]& [e_1^2+e_2^2+e_3^2]&\C^1&  u_{11}+u_{22}+u_{33}=0 \\
     \textrm{G} & 9  &  [e_{163} + e_{125}]& [e_1^2]& \p^4\smallsetminus\LL(2,4)& u_{23}=0\\
     \textrm{P} & 6  &  [e_{123}]& \emptyset& \textrm{-- --}& u_{11}=0\\
\end{array}
\end{equation*}

It is worth stressing that, by regarding each element in the ``representative'' column as a 3--form \eqref{eq:Phi.per.MAE} on $\CC$, see Remark \ref{remDictionary}, and then by applying \eqref{eq:gen.MAE} to the so--obtained  3--from, one \emph{does not} obtain exactly the corresponding element in the ``PDE" column, but rather an $\Sp(\CC)$--equivalent to it. The following example clarifies this aspect.
\begin{example}\label{ex:chiarificatore}
By employing the same procedure we used in Example \ref{ex:gianni.co.co}, we will associate a Monge-Amp\`ere equation to all of the four 3--forms that appear in the ``representative'' column of the above table. 
\begin{itemize}
\item The form $e_{123} + e_{456}$ gives the Monge-Amp\`ere equation $\det\|u_{ij}\|=1$.
\item The form $e_{423} + e_{126} + e_{153} + e_{123}$ gives the Monge-Amp\`ere equation 
\begin{equation*}
\det\|u_{ij}\|=u^\sharp_{11}+u^\sharp_{22}+u^\sharp_{22}\,.
\end{equation*}
The last equation, however, thanks to a transformation \eqref{eq:legendre.in.un.punto}, is $\Sp(\CC)$--equivalent  to $u_{11}+u_{22}+u_{33}+1=0$, which, in turn, is $\Sp(\CC)$--equivalent to $u_{11}+u_{22}+u_{33}=0$, by using, for instance, the additional transformation  $x^4\to x^4-x^1$.
\item The $e_{163} + e_{125}$ gives the Monge-Amp\`ere equation $u^\sharp_{23}=0$, which is $\Sp(\CC)$--equivalent to $u_{23}=0$, again, by means of  \eqref{eq:legendre.in.un.punto}.
\item The form $e_{123}$  gives the Monge-Amp\`ere equation $\det\|u_{ij}\|=0$, which  is $\Sp(\CC)$--equivalent to $u_{11}=0$: to see this it suffices to use a  partial Legendre transformation 
$$
(x^1,x^2,x^3,x^4,x^5,x^6)\to (x^1,x^5,x^6,x^4,-x^2,-x^3)\, ,
$$ 
see  Remark \ref{rem:legendre.and.partial.point}.
\end{itemize}
\end{example}

\subsection{Equivalence of the KLR invariant with the Hitchin moment map}\label{subsRussianContractio} We recast now, in the complex setting, the definition of the KLR invariant, introduced earlier in Section \ref{secKLRform}; the underlying idea is the same:  a 3--form $\eta$ on a 6--dimensional symplectic space $(\CC,\omega)$ can be contracted with  (the inverse $\omega^{-1}$ of) the symplectic form $\omega$, thus obtaining a quadratic form on $\CC$. In the present coordinates, we have
\begin{equation*}
    \eta=\eta_{ijk}x^{ijk}\in\Lambda^3(\CC^*)\, ,
\end{equation*}
so that the output $q(\eta)$ of the   contraction, i.e., formula \eqref{eq.JGG}, will now look like
\begin{equation*}
    q(\eta)_{ab}=\eta_{aij}\eta_{bhk}\omega^{ih}\omega^{jk}\in S^2(\CC^*)\, .
\end{equation*}
Direct computations show that $g^*(q(\eta))=q(g^*(\eta))$, for all $g\in\Sp(\CC)$; moreover, if $\eta=\alpha\wedge\omega$, then $q(\eta)=3\alpha^2$: in other words,
\begin{equation*}
    \xymatrix{
    \CC^*\ar[r]^{m_\omega}\ar[dr]_{3v_2} & \Lambda^3(\CC^*)\ar[d]^q\\
    & S^2(\CC^*)
    }
\end{equation*}
is a commutative diagram of $\Sp(\CC)$--invariant polynomial  maps. We will denote by the same symbol $q$ the restriction of $q$ to the subspace $\Lambda^3_0(\CC^*)$.  Let us stress that $\varpi:\p(\Lambda^3_0(\CC))\dashedrightarrow\p(S^2(\CC))$, cf. \eqref{eqDiagVarpi}, whereas $[q]:\p(\Lambda^3_0(\CC^*))\dashedrightarrow\p(S^2(\CC^*))$, so that the next statement makes sense, provided that one identifies $\CC$ with $\CC^*$ via $\omega$, see Section \ref{subCCstar}.
\begin{theorem}\label{thFirstOriginalResult}
The rational quadratic map $\varpi$ coincides with the projectivization of $q$.
\end{theorem}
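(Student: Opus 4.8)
The plan is to reduce the identity $\varpi=[q]$ to a multiplicity-one statement, thereby avoiding any direct comparison of the explicit quadratic expressions. Both maps are, by construction, $\Sp(\CC)$-equivariant quadratic maps: the map $\varphi$ underlying $\varpi$ sends $\Lambda^3_0(\CC)$ to $S^2(\CC)$, cf. \eqref{eqPhiLambda3CS2C} and \eqref{eqDiagVarpi}, while $q$ sends $\Lambda^3_0(\CC^*)$ to $S^2(\CC^*)$. First I would use the symplectic form to identify $\CC\simeq\CC^*$, together with the induced isomorphisms $\Lambda^3_0(\CC)\simeq\Lambda^3_0(\CC^*)$ (Remark \ref{remParamSpaceMAE}) and $S^2(\CC)\simeq S^2(\CC^*)$, so that $\varphi$ and $q$ become $\Sp(\CC)$-equivariant quadratic maps with one and the same source and one and the same target, the latter being the irreducible module $W_{(2,0,0)}$ (recall $W_{(2,0,0)}\simeq\sp(\CC)$, see Remark \ref{remFundWeights}, and that the adjoint representation of a simple Lie algebra is irreducible).

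Next I would pass from quadratic maps to linear maps on the symmetric square: a quadratic map $\Lambda^3_0(\CC)\to W_{(2,0,0)}$ is the same datum as a linear map $S^2(\Lambda^3_0(\CC))\to W_{(2,0,0)}$ precomposed with the Veronese embedding $v_2$, and equivariance of the quadratic map is equivalent to equivariance of the linear one. Thus $\varphi$ and $q$ correspond to elements $\widetilde\varphi,\widetilde q\in\Hom_{\Sp(\CC)}\!\big(S^2(\Lambda^3_0(\CC)),W_{(2,0,0)}\big)$. The decomposition recorded just above Section \ref{subsMomMap},
\begin{equation*}
S^2(\Lambda^3_0(\CC))=W_{(2,0,0)}\oplus W_{(0,0,2)},
\end{equation*}
exhibits $W_{(2,0,0)}$ with multiplicity one among two mutually non-isomorphic irreducible constituents; Schur's lemma then forces $\dim\Hom_{\Sp(\CC)}\!\big(S^2(\Lambda^3_0(\CC)),W_{(2,0,0)}\big)=1$. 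Consequently $\widetilde\varphi$ and $\widetilde q$ are proportional, i.e. $\varphi=c\,q$ for a single scalar $c\in\C$.

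It remains to check that $c\neq 0$, which is exactly what makes the two projectivizations coincide. For this I would verify nonvanishing of \emph{both} maps on a single element: Proposition \ref{propOpenOrbit} already gives $\varphi(e_{123}+e_{456})\neq 0$ (its class is $[\diag(1,1,1,-1,-1,-1)]$), while a one-line evaluation of the contraction $q(\eta)_{ab}=\eta_{aij}\eta_{bhk}\omega^{ih}\omega^{jk}$ on the same representative shows $q\neq 0$; since the $\Hom$-space is one-dimensional, both conclusions together force $c\neq 0$. Then, wherever $\varpi$ is defined, $\varpi([\eta])=[\varphi(\eta)]=[c\,q(\eta)]=[q(\eta)]=[q]([\eta])$, which is the claim.

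The genuinely delicate point is not the Schur count but the bookkeeping around the duality $\CC\leftrightarrow\CC^*$: one must ensure that the two irreducible targets $S^2(\CC)$ and $S^2(\CC^*)$ are matched by the \emph{same} $\omega$-induced isomorphism used to identify the two sources $\Lambda^3_0(\CC)$ and $\Lambda^3_0(\CC^*)$, so that $\varphi$ and $q$ really become two vectors of one common $\Hom$-space rather than maps between superficially different spaces. Once these identifications are pinned down consistently, as prescribed by \eqref{eq:C.C.Star} and Remark \ref{remParamSpaceMAE}, the multiplicity-one argument closes the proof, and no computation of the normalizing constant $c$ is required.
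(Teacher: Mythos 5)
Your argument is correct, but it is not the route the paper takes: the paper proves the theorem by brute force, evaluating the contraction $q(\eta)$ explicitly on the four orbit representatives $e_{123}+e_{456}$, $e_{423}+e_{126}+e_{153}+e_{123}$, $e_{163}+e_{125}$ and $e_{123}$, and matching the results ($4q^{(3)}$, $4q_{[2^3]}$, $2q_{[2,1^4]}$, $0$) against the images of $\varpi$ already computed in Propositions \ref{propOpenOrbit}, \ref{propOrbitLin}, \ref{propOrbGour} and \ref{propSingLoc}. Your multiplicity-one argument replaces all of that with Schur's lemma applied to the decomposition $S^2(\Lambda^3_0(\CC))=W_{(2,0,0)}\oplus W_{(0,0,2)}$, which the paper itself records just above Section \ref{subsMomMap}: since the two constituents are non-isomorphic irreducibles, $\Hom_{\Sp(\CC)}\big(S^2(\Lambda^3_0(\CC)),W_{(2,0,0)}\big)$ is one-dimensional, so the linearizations of $\varphi$ and $q$ are proportional, and nonvanishing of $\varphi$ (Proposition \ref{propOpenOrbit}) already forces the constant to be nonzero — strictly speaking you do not even need to evaluate $q$ separately, since $\varphi=c\,q\neq 0$ gives $c\neq 0$ at once. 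One remark on the point you flag as delicate: the matching of the $\omega$-induced identifications of source and target is in fact harmless here, because any two $\Sp(\CC)$-equivariant isomorphisms between irreducible modules differ by a scalar, which is absorbed into $c$ and disappears under projectivization. What your approach buys is conceptual economy and robustness (no coordinate computations, and it explains \emph{why} the two constructions must agree); what the paper's computation buys is the explicit normal form of $q(\eta)$ on each orbit, which serves as a concrete cross-check of the weight-theoretic computations in Section \ref{subsVarpiAndOrbits} and feeds directly into the case analysis of Corollary \ref{corEquivalencyMomentCoChar}.
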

\begin{proof}
    In view of the invariance of both $\varpi$ and $q$ it suffices to check the claim on the orbit representatives. 
    We also observe that, in our coordinates, the inverse of $\omega$ (which is nothing but $-\omega$) reads $x^{41}+x^{52}+x^{63}$.\par Let us start from the open orbit (O):   if we take $\eta=e_{123}+e_{456}$ we have that
    \begin{eqnarray*}
        q(\eta)&=& (e_{123}+e_{456})(e_{123}+e_{456})(x^{41}+x^{52}+x^{63})(x^{41}+x^{52}+x^{63}) \\
        &=& (e_{123}+e_{456}) ( e_{23}x^4 +e_{13}x^5 + e_{12}x^6 + e_{56}x^1 +e_{46}x^2 +e_{45}x^3 ) (x^{41}+x^{52}+x^{63})\\
        &=& (e_{123}+e_{456}) \,\cdot\,2 \,\cdot\, (e_1x^5x^6+e_2x^4x^6+e_3x^4x^5+e_4x^2x^3+e_5x^1x^3+e_6x^1x^2)\\
        &=&4  \,\cdot\,(e_1e_4+e_2e_5+e_3e_6)\\
        &=&4  \,\cdot\,q^{(3)}\, .
    \end{eqnarray*}
This means that $\varpi([\eta])=[g(\eta)]$ and the claim is valid on the orbit O, see Proposition \ref{propOpenOrbit}.\par
In order to study the orbit L we take $\eta=e_{423} + e_{126} + e_{153} + e_{123}$ and we compute
    \begin{eqnarray*}
        q(\eta)&=& (e_{423} + e_{126} + e_{153} + e_{123})(e_{423} + e_{126} + e_{153} + e_{123})(x^{41}+x^{52}+x^{63})(x^{41}+x^{52}+x^{63}) \\
        &=& (e_{423} + e_{126} + e_{153} + e_{123})\,\cdot\\
        &&\cdot\, \big( e_{23}x^1+(e_{26} +e_{53}+e_{23})x^4+(e_{43}+e_{16}+e_{13})x^5+e_{13}x^2+(e_{42}+e_{15}+e_{12})x^6+e_{12}x^3\big)\,\cdot\\
        &&\cdot\, (x^{41}+x^{52}+x^{63})\\
        &=& (e_{423} + e_{126} + e_{153} + e_{123})\,\cdot\\
                &&\cdot\, 2 \,\cdot\,\big( e_1(x^2x^6+x^5x^6+x^3x^5)+e_2(x^1x^6+x^4x^6+x^3x^4)+e_3(x^1x^5+x^2x^4+x^4x^5)+e_4x^5x^6+e_5x^4x^6+e_6x^4x^5\big)\\
         &=&2 \,\cdot\, (e_1(e_1+e_1)+e_2(e_2+e_2)+e_3(e_3+e_3))\\
        &=&4  \,\cdot\,(e_1^2+e_2^2+e_3^2)\\
        &=&4 \,\cdot\, q_{[2^3]}\, ,
    \end{eqnarray*}
    and the claim follows now from Proposition \ref{propOrbitLin}.\par
    We pass   to the last orbit on which $\varpi$ acts nontrivially, that is the orbit G: we take $\eta=e_{163} + e_{125} $ and  we compute
     \begin{eqnarray*}
        q(\eta)&=& (e_{163} + e_{125})(e_{163} + e_{125})(x^{41}+x^{52}+x^{63})(x^{41}+x^{52}+x^{63}) \\
        &=& (e_{163} + e_{125}) ( e_{63}x^4 +e_{25}x^4 + e_{12}x^2 + e_{15}x^5 +e_{13}x^2 +e_{16}x^3 ) (x^{41}+x^{52}+x^{63})\\
        &=& (e_{163} + e_{125})\big(2 (e_1x^2x^5+e_2x^2x^4)+e_3(x^4x^6+x^3x^4)+e_5(x^4x^5+x^2x^4)+e_6(x^3x^4+x^4x^6)+e_1((x^6)^2+(x^3)^2)\big)\\
        &=&2 \,\cdot\,  e_1^2 \\
        &=&2  \,\cdot\,q_{[2,1^4]}\, ,
    \end{eqnarray*}
    see then Proposition \ref{propOrbGour}.\par
    To complete the proof, we recall Proposition \ref{propSingLoc} and observe that, on the representative of the orbit G, we have
     \begin{eqnarray*}
        q(e_{123})&=& e_{123}e_{123}(x^{41}+x^{52}+x^{63})(x^{41}+x^{52}+x^{63}) \\
        &=& e_{123}( e_{23}x^4 +e_{13}x^5 + e_{12}x^6  ) (x^{41}+x^{52}+x^{63})\\
        &=& e_{123} \,\cdot\, 2  \,\cdot\,(e_1x^5x^6+e_2x^4x^6+e_3x^4x^5 )\\
        &=&0\, .
    \end{eqnarray*}
\end{proof}

\subsection{Fiber compactification via singular limits (over $\R$)}

In this subsection we momentarily go back to the real--differentiable setting.\par

Indeed, the two--points compactification of the fiber through a generic Monge--Amp\`ere equation, discussed in Section \ref{subsecImageOpenOrbit} above, becomes particularly evident if we go back to the real case and employ a total Legendre transform  
(for reasons of clarity, below we shall adopt the notation of Example \ref{ex:Legendre}). Then, instead of \eqref{eqCiStar}, we shall have a real one--parametric family of generic, that is, non--linearizable, Monge--Amp\`ere equations, corresponding to the following family of 3--forms:
\begin{equation*} 
      \{[e_{123}+ k\cdot e_{456}]\mid k\in\R\smallsetminus\{0\}\}\, .
\end{equation*}

By computing the Monge--Amp\`ere equation associated with each member of the family (for the procedure, see also Example \ref{ex:chiarificatore}), we obtain a one--parameter family of PDEs:
\begin{equation}\label{eqEqLimit1}
    \det\|u_{ij}\|=k
\end{equation}
While the limit for 
$k\to 0$ is the parabolic  Monge--Amp\`ere equation 
\begin{equation}\label{eq:PMAE.appoggio}
\det\|u_{ij}\|=0\,,
\end{equation}
in order to take the limit of \eqref{eqEqLimit1}  for 
$k\to\infty$
we can, for instance, perform a total Legendre transform 
(i.e., \eqref{eq:legendre.partial} with $m=n=3$), thus obtaining (see also Example \ref{ex:oriz.and.equival})
\begin{equation*}
    \det\|\widetilde{u}_{ij}\|=-\frac{1}{k}
\end{equation*}
in the new (tilded) coordinates. Now, by taking 
$k\to \infty$, we obtain
\begin{equation}\label{eqEqLimit2}
    \det\|\widetilde{u}_{ij}\|=0\, .
\end{equation}
The two equations \eqref{eq:PMAE.appoggio} and \eqref{eqEqLimit2}, though equivalent, are not the same, as they correspond to two mutually transversal 3D subdistributions, namely $\mathcal{D}=\Span{D^{(1)}_1,D^{(1)}_2,D^{(1)}_3}$ and $\widetilde{\mathcal{D}}=\Span{\widetilde{D}^{(1)}_1,\widetilde{D}^{(1)}_2,\widetilde{D}^{(1)}_3}=\Span{\partial_{u_1},\partial_{u_2},\partial_{u_3}}$: see Section \ref{sec:Goursat}, in particular equation \eqref{eq:D} with $b_{ij}=0$. Recall also that $D^{(1)}_i=\partial_{x^i}+u_i\partial_u$, cf. \eqref{eq:total.derivative}.\par

The proposed framework has thus allowed visualizing the ``singular limit" procedure mentioned by E.~Ferapontov in the context of integrable PDEs as a two--points compactification of an affine line whose points parametrize a family of  non--linearizable Monge--Amp\`ere equations. Also, note that the two parabolic Monge--Amp\`ere equations  \eqref{eq:PMAE.appoggio} and \eqref{eqEqLimit2}, related by a total Legendre transformation, or, in a matter of speaking, by a ``flip" of the corresponding 3D subdistribution, are actually the limit points of a family of non--linearizable equations. \par

We would have obtained similar results, had we considered the partial Legendre transformation discussed in Remark \ref{rem:partial.Legendre} above. It transforms equation \eqref{eqEqLimit1} into (cf. also Example \ref{ex:oriz.and.equival})
\begin{equation*}
k\tilde{u}_{11}+\tilde{u}_{22}\tilde{u}_{33}-\tilde{u}_{23}^2=0\,.
\end{equation*}
In this case, the limits 
$k\to \infty$ and $k\to 0$ give, respectively, the equations $\tilde{u}_{11}=0$ and $\tilde{u}_{22}\tilde{u}_{33}-\tilde{u}_{23}^2=0$.

\section{Hyperplane sections of the Lagrangian Grassmannian $\LG(3,\CC)$}\label{secLinSec}

We can finally pick up the thread we left at the end of Section \ref{secMAESpace} and provide a proof of Theorem \ref{thMain1} over the field of the complex numbers, see Corollary \ref{corEquivalencyMomentCoChar} below. The reader must be warned that the definition of a Monge--Amp\`ere equation, as well as that of its cocharacteristic variety, were already given above, see  Definition \ref{defMAEq} and Definition \ref{def.cochar.var}, respectively: we repeat them below (see Definition \ref{defMAEComplex} and Definition \ref{defCoCarComp}, respectively) to stress that, despite their formal similarity, the two versions   of the same definition pertain to different categories (real differentiable versus complex analytic). Moreover, the study carried out in the second part of the paper is strictly point--wise, i.e., it pertains a particular fiber of the bundle $J^2\to J^1$, whereas in the first part the formalism is global, even though we have always considered \emph{symplectic} (i.e., not depending on the point of $J^1$) Monge--Amp\`ere equations: compare, for instance, the earlier Definition \ref{def:Goursat.MAE} of a Goursat type Monge--Amp\`ere equation with the new Definition \ref{def:Goursat.MAE.Complex} given below.

\subsection{The Lagrangian Grassmannian and its tangent geometry}\label{secFinalSubsection}
In what follows, $X$ will be denoting the 6--dimensional regular variety $\LG(3,\CC)$ of 3--dimensional isotropic linear subspaces of $\CC$, embedded into $\p(\Lambda_0^3(\CC))$ via the Pl\"ucker embedding
\begin{eqnarray*}
X & \longrightarrow & \p(\Lambda_0^3(\CC))\, ,\\
L=\Span{l_1,l_2,l_3} & \longmapsto & \vol(L):=[l_1\wedge l_2\wedge l_3]\, .
\end{eqnarray*}
Let us fix $L\in X$: a nice description of the tangent geometry of $X$ at $L$ goes as follows \cite{SamSteven2009}: we ``perturb'' the Lagrangian subspace $L$ by means of a linear map $h\in\Hom(L,\CC)$, that is, we employ $h$ to define a curve
\begin{equation}\label{eqCurvaGammaTiInX}
    \gamma_h(t):=[(l_1+th(l_1))\wedge (l_2+th(l_2))\wedge (l_3+th(l_3))]=[l_1\wedge l_2\wedge l_3+t(h(l_1)\wedge l_2\wedge l_3+l_1\wedge h(l_2)\wedge l_3+l_1\wedge l_2\wedge h(l_3))+o(t^2)]
\end{equation}
passing through $L$ at $t=0$. Above formula \eqref{eqCurvaGammaTiInX} shows that, if      $h$ takes its values in $L$, then  part that is   linear in $t$ is absorbed by the free term and, hence, the  velocity $\Dot{\gamma}_h(0)$ of $\gamma_h$ at 0 vanishes: in order to obtain all tangent vectors to $X$ at $L$ it is then safe to assume that $h$ be an element of
\begin{equation*}
    \Hom(L,\tfrac{\CC}{L})=L^*\otimes \tfrac{\CC}{L} =L^*\otimes L^*\, ,
\end{equation*}
where the identification $\tfrac{\CC}{L}=L^*$ is given by the isomorphism $\widetilde{\omega}_L$ closing the commutative diagram
\begin{equation*}
    \xymatrix{
    \CC\ar[d]\ar[r]^{\omega} & \CC^*\ar[d]\\
    \tfrac{\CC}{L}\ar[r]^{\widetilde{\omega}_L} & L^*\, .
    }
\end{equation*}
Less evident is that $\gamma_h(t)$ is a curve in $X$ if and only if $h\in S^2(L^*)$ (it can be proved by a direct coordinate approach); since the   contraction
\begin{eqnarray*}
S^2(L^*) & \longrightarrow &  \Lambda^3(\CC)\\
h &\longmapsto & h\ins(l_1\wedge l_2\wedge l_3):=
     h(l_1)\wedge l_2\wedge l_3+l_1\wedge h(l_2)\wedge l_3+l_1\wedge l_2\wedge h(l_3)
\end{eqnarray*}
is injective, it can be concluded that $h\longmapsto\Dot{\gamma}_h(0)$ is a monomorphism of $S^2(L^*)$ into $T_LX$ and then, for obvious dimensional reasons, an isomorphism.

\subsection{Hyperplane sections and their tangent geometry}
The moduli space of hyperplane sections of $X$ is the 13--dimensional projective space $\p(\Lambda_0^3(\CC^*))$: in view of the applications to the theory of $2\Nd$ order PDEs, let us recall, cf. \eqref{eqEeta}, that $\E_\eta$ denotes 
the hyperplane section determined by $[\eta]\in \p(\Lambda_0^3(\CC^*))$: in other words, from now on, the name ``(symplectic) Monge--Amp\`ere equation" will be a synonym of ``hyperplane section".
\begin{definition}\label{defMAEComplex}
The five--fold $ \E_\eta\subset X$ is the \emph{Monge--Amp\`ere equation}  associated with $\eta\in\Lambda^3_0(\CC^*)$ (in the sense of Lychagin). 
\end{definition}
Let us now fix a smooth point $L\in(\E_\eta)_{\textrm{sm}}$ and consider   the curve \eqref{eqCurvaGammaTiInX} determined by $h\in S^2(L^*)$: it will be tangent to the MAE $\E_\eta$ if and only if \begin{equation*}
    \eta(h\ins(l_1\wedge l_2\wedge l_3))=0\, .
\end{equation*}
Put differently, having defined the linear map
\begin{eqnarray*}
    T_LX=S^2(L^*) & \stackrel{\widetilde{\eta}_L}{\longrightarrow} & \Lambda^3L^*\, ,\\
    h&\longmapsto & \eta(h\ins\, \cdot\, )\, ,
\end{eqnarray*}
we obtain 
\begin{equation}\label{defTangSpaceEL}
    T_L(\E_\eta)=\ker\widetilde{\eta}_L\, .
\end{equation}
\begin{definition}
The quadric $\sigma_L\subset\p(L^*)$ cut out by the equation $\widetilde{\eta}_L=0$ is the \emph{characteristic variety} of the MAE $\E_\eta$ at the point $L$.
\end{definition}
We can globalize the above reasoning by introducing the tautological bundle $\sL$ on $X$ (whose fiber at $L\in X$ is $L$ itself): we have then a global identification
\begin{equation*}
    TX=S^2(\sL^*)
\end{equation*}
and $\widetilde{\eta}_L$ turn out to be the value at $L$ of a section
\begin{equation*}
    \widetilde{\eta}\in\Gamma((\E_\eta)_{\textrm{sm}},S^2(\sL)\otimes\Lambda^3(\sL^*))\, ,
\end{equation*}
such that
\begin{equation*}
    \left.\ker \widetilde{\eta}\right|_{(\E_\eta)_\textrm{sm}}=T (\E_\eta)_\textrm{sm}\, .
\end{equation*}
\begin{definition}
The subvariety $\sigma\subset\p(\sL^*|_{(\E_\eta)_{\textrm{sm}}})$ cut out by the equation $\widetilde{\eta}=0$ is the \emph{characteristic variety} of the MAE $\E_\eta$.
\end{definition}
Observe that
\begin{equation*}
    \sigma=\bigcup_{L\in (\E_\eta)_{\textrm{sm}}}\sigma_L\, .
\end{equation*}
\subsection{Projective duality and the cocharacteristic variety}\label{secSubSecCoCar}
We would like now to pass to the projective dual $\sigma_L^\vee\subset\p(L)$ of the characteristic varieties $\sigma_L$, because each $\p(L)$ embeds naturally into $\p(\CC)$, whereas  $\p(L^*)$  does not posses any special embedding into $\p(\CC^*)$: the passage from $\p(L^*)$ to $\p(L)$ will allow us to regard the   sum of all the $\sigma_L^\vee$'s as a subset of $\p(\CC)$, viz.
\begin{equation}\label{defCocVar}
    \sigma^\vee:=\overline{\bigcup_{L\in(\E_\eta)_\textrm{sm}}\sigma_L^\vee}\subset\p(\CC)\, .
\end{equation}
\begin{definition}\label{defCoCarComp}
The subset $\sigma^\vee$ of $\p(\CC)$ is called the \emph{cocharacteristic variety} of the MAE $\E_\eta$.
\end{definition}
The equation cutting out $\sigma_L^\vee$ is easily obtained from $\widetilde{\eta}$, regarded as a map
\begin{equation*}
\widetilde{\eta}_L: L^*\longrightarrow L\otimes\Lambda^3(L^*)
\end{equation*}
and then passing to its second exterior power:
\begin{equation*}
\Lambda^2(\widetilde{\eta}_L): \Lambda^2(L^*)\longrightarrow \Lambda^2(L)\otimes(\Lambda^3(L^*))^2\, .
\end{equation*}
Then, in virtue of the identifications $\Lambda^2(L)=L^*\otimes \Lambda^3(L)$ and $\Lambda^2(L^*)=L\otimes\Lambda^3(L^*)$, we can regard $\Lambda^2(\widetilde{\eta}_L)$ as
\begin{equation*}
\Lambda^2(\widetilde{\eta}_L): L\otimes \Lambda^3(L^*)\longrightarrow  L^* \otimes\Lambda^3(L)\otimes(\Lambda^3(L^*))^2=L^*\otimes \Lambda^3(L^*) \, ,
\end{equation*}
and finally
\begin{equation*}
\Lambda^2(\widetilde{\eta}_L)\in L^*\otimes L^* \otimes\Lambda^3(L^*)\otimes\Lambda^3(L)= L^*\otimes L^* \, .
\end{equation*}
In view of the usual symmetry consideration, we have in fact that
\begin{equation*}
\Lambda^2(\widetilde{\eta}_L)\in S^2(L^*)\, .
\end{equation*}
Since $\sigma_L^\vee$ is cut out by the $2\times 2$ minors of the ($3\times 3$) matrix of the quadratic form $\widetilde{\eta}_L$, and the matrix of such minors is precisely $\Lambda^2(\widetilde{\eta}_L)$, we have that 
\begin{equation}\label{eqCoCarPointL}
   \sigma_L^\vee=\{ \Lambda^2(\widetilde{\eta}_L)=0\}\subset\p(L)\, .
\end{equation}
It is worth observing that 
\begin{equation}\label{eqLambdaTwoEta}
    \Lambda^2(\widetilde{\eta})\in\Gamma((\E_\eta)_\textrm{sm}, S^2(\sL^*))\, ,
\end{equation}
that is, $\Lambda^2(\widetilde{\eta})$ is a section of honest (untwisted) quadratic forms on the tautological bundle $\sL$. In the last Section~\ref{secFinalSubsection} we will prove that the  cocharacteristic variety is, in the appropriate sense, cut out by   section \eqref{eqLambdaTwoEta}. Before passing to that, we study the rank of the characteristic variety across the four isomorphism types of   Monge--Amp\`ere equations.

\subsection{Singular loci of Monge--Amp\`ere equations}

The Lagrangian Grassmanian $X\subset\p(\Lambda_0^3(\CC))$ allows us to recast the stratification of $\p(\Lambda_0^3(\CC^*))$ in terms of dual varieties: in this perspective, the open (13--dimensional) orbit is the complement $\p(\Lambda_0^3(\CC^*))\smallsetminus X^\vee$, whereas the closed (6--dimensional) one is isomorphic to $X$ itself via the   isomorphism
\begin{equation*}
    \p(\Lambda_0^3(\CC))\longrightarrow \p(\Lambda_0^3(\CC^*))
\end{equation*}
induced by the symplectic form $\Omega$, see Remark \ref{remParamSpaceMAE}. In order to stress that $X$ in embedded into $\p(\Lambda_0^3(\CC^*))$ we will use the symbol $X^*$ for it (not to be confused with the projective dual $X^\vee$).\par
The remaining strata, of dimension 12 and 9, are $X^\vee\smallsetminus\Sing(X^\vee)$ and $\Sing(X^\vee)\smallsetminus X^*$, respectively.\par

As observed in \cite[Proposition 2.5.1 (iii)]{Iliev2005} there is a finite map
\begin{eqnarray}\label{eqMappaMappinaGr3LGr}
    \Gr(3,\CC)&\stackrel{}{\longrightarrow} & \Sing(X^\vee)\, ,\\
    D & \longmapsto & [{\eta(D)}]\, ,\nonumber
\end{eqnarray}
where $[{\eta(D)}]$ is the only element, such that the hyperplane section $\E_{\eta(D)}$ is the Schubert  cycle
\begin{equation}\label{eqSchubert}
   \E_{\eta(D)}= \E_D:=\{L\in X\mid\dim(L\cap D)\geq 1\}
\end{equation}
determined by $D$ in $X$; see also \cite[Section 3.3]{MR2985508} for the real--differentiable setting.\par
\begin{definition}\label{def:Goursat.MAE.Complex}
The Monge--Amp\`ere equation $\E_D$ is called the \emph{Goursat type Monge--Amp\`ere equation} determined by the 3--dimensional subspace $D\subset\CC$. If $D$ is Lagrangian, then $\E_D$ is called \emph{parabolic}.
\end{definition}
The map \eqref{eqMappaMappinaGr3LGr}, restricted to the Lagrangian Grassmannian $X\subset  \Gr(3,\CC)$, realizes an isomorphism between $X$ and $X^*$, whereas the fiber over a point $[{\eta(D)}]\in\Sing(X^\vee)\smallsetminus X^*$ is the pair $\{D, D^\perp\}$: all of this reflects what happens in the real--differentiable setting, see Section \ref{sec:Goursat} above.

The  results of   Theorem \ref{thMain2} below are partially contained in the above--quoted \cite[Proposition 2.5.1]{Iliev2005}.
\begin{theorem}\label{thMain2}
Let $[\eta]\in\p(\Lambda^3_0(\CC^*))$. Up to $\Sp(\CC)$-action, we have four possibilities:
\begin{itemize}
    \item[(O)] if $[\eta]\not\in X^\vee$, then $\E_\eta$ is nonsingular and $\rk\widetilde{\eta}=3$;
    \item[(L)] if $[\eta]\in X^\vee\smallsetminus\Sing(X^\vee)$, then $\E_\eta$ has a unique quadratic singularity and $\rk\widetilde{\eta}=3$ on $(\E_\eta)_{\textrm{sm}}$;
    \item[(G)] if $[\eta]\in \Sing(X^\vee)\smallsetminus X^*$, then $\Sing(\E_\eta)$ is isomorphic to the Schubert cycle determined by a non--Lagrangian two--dimensional subspace in $\LG(2,4)$ and $\rk\widetilde{\eta}=2$ on $(\E_\eta)_{\textrm{sm}}$;
    \item[(P)] if $[\eta]\in  X^*$, then $\Sing(\E_\eta)$ is the projective cone over the Veronese surface, and $\rk\widetilde{\eta}=1$ on $(\E_\eta)_{\textrm{sm}}$.
\end{itemize}
\end{theorem}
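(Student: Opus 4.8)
The plan is to verify the four cases by working with explicit orbit representatives, since everything in sight is $\Sp(\CC)$--equivariant and the statement is $\Sp(\CC)$--invariant. The representatives are already laid out in the summary table of Section \ref{subListNormalForms}: $e_{123}+e_{456}$ for (O), $e_{423}+e_{126}+e_{153}+e_{123}$ for (L), $e_{163}+e_{125}$ for (G), and $e_{123}$ for (P). For each representative $\eta$, I must describe (i) the singular locus of the five--fold $\E_\eta\subset X=\LG(3,\CC)$, and (ii) the generic rank of the characteristic map $\widetilde\eta_L\colon S^2(L^*)\to\Lambda^3(L^*)$ defined in Section \ref{secSubSecCoCar}, evaluated at a smooth point $L\in(\E_\eta)_{\mathrm{sm}}$. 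The geometry of the singular loci is largely imported: the stratification of $\p(\Lambda^3_0(\CC^*))$ into the four orbits (O), (L), (G), (P) as $\p(\Lambda^3_0(\CC^*))\smallsetminus X^\vee$, $X^\vee\smallsetminus\Sing(X^\vee)$, $\Sing(X^\vee)\smallsetminus X^*$, and $X^*$ is exactly the dual--variety description recalled just above the theorem, and the identification of $\Sing(\E_\eta)$ in cases (G) and (P) with Schubert cycles follows from the finite map \eqref{eqMappaMappinaGr3LGr} together with \cite[Proposition 2.5.1]{Iliev2005}.

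\textbf{Case-by-case computation of the rank.} The genuinely new content is the computation of $\rk\widetilde\eta$ in each stratum, so I would organize the proof around this. For a representative $\eta$ written in coordinates via \eqref{eqFormulaEtaConMatrici}, the characteristic map at a point $L$ with $L=\Span{D^{(1)}_i+u^0_{ij}\partial_{u_j}}$ is governed by the symbol of the associated Monge--Amp\`ere operator $F_\eta$ of \eqref{eqFormulaEffeEtaConMatrici}, namely by $\partial F_\eta/\partial u_{ij}$; indeed $\widetilde\eta_L$ sends $h\in S^2(L^*)$ to the contraction $\eta(h\ins\,\cdot\,)$, and in the trivialization $S^2(L^*)\cong S^2(\C^3)$ this is exactly the quadratic form whose matrix is $\|\partial F_\eta/\partial u_{ij}\|$. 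Thus $\rk\widetilde\eta_L$ equals the rank of the symbol matrix of $F_\eta$ at the corresponding $2$--jet. It then suffices to compute, for each representative, the symbol of the Monge--Amp\`ere equation attached to $\eta$ (these equations are read off in Example \ref{ex:chiarificatore}): for (O) one gets $\det\|u_{ij}\|=1$, whose symbol is the cofactor matrix $\|u^\sharp_{ij}\|$, of rank $3$ at a regular point; for (L) one gets $u_{11}+u_{22}+u_{33}=0$ after the Legendre normalization, whose symbol is the identity, again rank $3$ on the smooth locus; for (G) one gets $u_{23}=0$, of symbol rank $2$; and for (P) one gets $u_{11}=0$, of symbol rank $1$. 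These ranks match the claimed values $3,3,2,1$.

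\textbf{Reconciling with the singular loci and the main obstacle.} The last point to nail down is the clause ``on $(\E_\eta)_{\mathrm{sm}}$'' and the geometric description of $\Sing(\E_\eta)$: in cases (O) and (L) the rank is maximal and constant, so the distinction between smooth and singular points only matters in (L), where the single quadratic singularity must be located and shown to be the unique point where the section $\widetilde\eta$ drops rank. For (G) and (P) I must check that the stated rank is the generic value on the smooth locus while recognizing that $\Sing(\E_\eta)$ is cut out where $\widetilde\eta$ degenerates further, and then identify that degeneracy locus, via the Schubert--cycle description \eqref{eqSchubert}, with the projective cone over the Veronese surface in case (P) and with the relevant $\LG(2,4)$ Schubert cycle in case (G). I expect the main obstacle to be precisely this last identification of the \emph{singular loci} as concrete projective varieties rather than the rank computation itself: pinning down that $\Sing(\E_\eta)$ in the parabolic case is the projective cone over the Veronese surface requires either a careful analysis of the second fundamental form of $\E_\eta$ along its singular stratum, or a direct appeal to and transcription of \cite[Proposition 2.5.1]{Iliev2005} into the present notation. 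Since the theorem explicitly states that its results are ``partially contained'' in that reference, the cleanest route is to cite \emph{loc.\ cit.} for the geometry of the singular loci and to supply independently only the rank statements, which follow transparently from the symbol computation above.
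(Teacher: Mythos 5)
Your overall strategy is the paper's: reduce everything to the four orbit representatives by equivariance, and obtain the rank of $\widetilde{\eta}$ by identifying $\widetilde{\eta}_L$ (up to a factor) with the symbol $\|\partial F_\eta/\partial u_{ij}\|$ of the defining function, which is exactly how the paper closes its proof (it notes that $T_L\E_\eta=\ker\widetilde{\eta}_L=\ker dF$, so $\widetilde{\eta}_L$ and the symbol are proportional, and then reads off ranks $3,3,2,1$ from $\eta_1^2+\eta_2^2+\eta_3^2$, $\eta_1\eta_2$, $\eta_1^2$). Where you genuinely diverge is on the singular loci: you propose to import them wholesale from \cite[Proposition~2.5.1]{Iliev2005}, whereas the paper proves them from scratch with three distinct techniques --- projective duality plus a nonzero-Hessian argument (citing Tevelev) for the unique quadratic singularity in case (L); the Lagrangian Chow transform and the incidence resolution \eqref{eqDiagChowRestr} for case (G), where the delicate step is showing $\Sing(\E_D)\supseteq\E_{D,2}$ via a dimension count on the degeneracy locus $Z$ of $dq$ in \eqref{eqSingLoc}--\eqref{eqPuntSingSchubCell}; and the big cell $\mathcal{V}\simeq S^2D^*$ together with $d_h(\det)=h^\sharp\cdot dh$ and the decomposition \eqref{eqPluckDeomposed} of the Pl\"ucker embedding for case (P), which is what actually exhibits $\Sing(\E_D)$ as the projective cone over the Veronese surface. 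The citation route is shorter and legitimate to the extent that the reference really contains these statements, but the paper's qualifier ``partially contained'' is a warning that not everything is there in the form needed; your own text concedes that the identification of the singular loci is the main obstacle and offers only two alternatives (second fundamental form, or transcription of \emph{loc.\ cit.}) without carrying either out. In particular, in case (G) the inclusion $\Sing(\E_\eta)\supseteq\E_{D,2}$ --- i.e.\ that \emph{every} point where $\dim(L\cap D)=2$ is singular, not merely that singular points lie there --- is the step your sketch leaves unproved, and it is precisely the step the paper spends the most effort on. One small conceptual correction: $\Sing(\E_\eta)$ is not where $\widetilde{\eta}$ ``degenerates further'' to some intermediate rank, but exactly where $\widetilde{\eta}_L$ vanishes identically (the hyperplane contains all of $T_LX$), which is why the rank statements in the theorem are asserted only on $(\E_\eta)_{\textrm{sm}}$.
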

Before giving a proof, we recall 
the $\Sp(\CC)$--equivariant double fibration
 \begin{equation}\label{eqDiagChow}
     \xymatrix{
      & \LFl(1,3;\CC)\ar[dl]_{\overline{p}}\ar[dr]^{\overline{q}}&\\
      \p\CC && X\, ,
     }
 \end{equation}
 where $\LFl(1,3;\CC)$ is the smooth, eight--dimensional $\Sp(\CC)$--homogeneous variety of \emph{Lagrangian} (or, according to some authors, \emph{$\omega$--isotropic})  \emph{flags} of $\CC$. 
The fibers are easily described:  $\overline{p}^{-1}(\ell)=\LG\left(2,\tfrac{\ell^\perp}{\ell}\right)$, that is, a copy of the 3--dimensional quadric $\LG(2,4)\subset\p^4$, for all $\ell\in\p\CC$,  whereas $\overline{q}$ is nothing but the projectivized tautological bundle $\p\sL$, i.e.,   $\overline{q}^{-1}(L)=\p L$, that is, a copy of $\p^2$, for all $L\in X$.\par
 The projectivization $\p D$ of a 3--dimensional linear subspace $D\in\Gr(3,\CC)$ of $\CC$ is  an irreducible subvariety of $\p\CC$ of pure codimension three and degree one: by regarding the Schubert cycle \eqref{eqSchubert} as the \emph{Lagrangian Chow transform} 
 \begin{equation}\label{eqLagChowTrED}
     \E_D=\overline{q}(\widetilde{\E}_D)\, ,\quad \widetilde{\E}_D:=\overline{p}^{-1}(\p D)=\{(\ell, L)\mid \ell\subseteq L\cap D\}\, 
 \end{equation}
 of $\p D$, we come to the conclusion that $\E_D$ is     a hypersurface of X of degree one, that is, a hyperplane section, see \cite[Lemma 23]{MR3904635}. This hypersurface, however, is not smooth.  \par
 Diagram \eqref{eqDiagChow} provides  a singularity resolution of $\E_D$:  the subset $\widetilde{\E}_D\subset \LFl(1,3;\CC)$, being the restriction of the bundle $\overline{p}$ to a smooth subvariety of its base $\p\CC$, is smooth as well and has dimension $5=2+3=\dim(\p D)+\dim(\LG(2,4))$, that is the same  dimension of $\E_D\subset X$: by restricting \eqref{eqDiagChow} to $\p D$ and $\E_D$ we obtain then a double fibration
 \begin{equation}\label{eqDiagChowRestr}
     \xymatrix{
      & \widetilde{\E}_D\ar[dl]_{p}\ar[dr]^{q}&\\
      \p D && \E_D\, ,
     }
 \end{equation}
 where $p$ has the same fibers as $\overline{p}$, but the restriction $q$  of $\overline{q}$ is a surjective morphism between varieties of the same dimension five.\par
Notation 
 \begin{equation}\label{eqDefEDiI}
        \E_{D,i}:=\{ L\in X\mid \dim(L\cap D) =i\}\, ,\quad  i=1,2,3\, ,
 \end{equation}
 will come in handy in the proof below.
 
\begin{proof}[Proof of Theorem \ref{thMain2}]
    In the case (O), the hyperplane $\p(\ker\eta)$ is nowhere tangent to $X$, therefore $\E_\eta=X\cap \p(\ker\eta)$ must be nonsingular. \par
    In the case (L), there exists a unique line through $[\eta]$ that is tangent to $X^*$. The point that corresponds  to the tangency point in $X$, via the isomorphism induced by the symplectic form,  is precisely the unique singular point of $\E_\eta$. If we choose some local coordinates around this singular point, then the Hessian will be nonzero in it, so the singularity will be quadratic: see \cite[Theorem 6.1]{Tevelev2003} \par
    In the case (G), there exists an element $D\in\Gr(3,\CC)\smallsetminus X$, such that $\E_\eta=\E_D$: the dimension of $L\cap D$ must then be less than 3 and the union  
    \begin{equation*}
        \E_{D}=\E_{D,1}\cup \E_{D,2}
    \end{equation*}
    is disjoint, cf. \eqref{eqDefEDiI}. 
    We also observe that, since $\omega|_D$ must be degenerate and $\ker\omega|_D$ cannot be 3, the line $\ell_D:=\ker\omega|_D$ will fulfill $\omega(\ell,D)=0$, that is $\ell_D\subset D^\perp$: summing up,
    \begin{equation}\label{edDefEllDiPerp}
        \ell_D=D\cap D^\perp\, ,\quad \dim\ell_D=1\, ,\quad D + D^\perp =\ell_D^\perp\, ,\quad \dim\ell_D^\perp=5\, .
    \end{equation}
    If $L\in \E_{D,1}$, then $\ell(D):=L\cap D$ is a line and then $q^{-1}(L)=\{\ell(D)\}\subset\p L$ is a point, cf. \eqref{eqLagChowTrED}: in other words, 
    \begin{eqnarray*}
        q' : \E_{D,1} & \longrightarrow & \p\sL|_{\E_{D,1}}\\
        L&\longmapsto & \ell(D)
    \end{eqnarray*}
    defines a section of the projectivized tautological bundle over $\E_{D,1}$, i.e., a section of   $\overline{q} $ over $\E_{D,1}$, that  takes its values in ${q}^{-1}(\E_{D,1})$ and it is clearly an inverse of  
    $q$.  Summing up,  $q$  restricts to an isomorphism between the subsets $q^{-1}(\E_{D,1})\subset \widetilde{\E}_D $ and $\E_{D,1}\subset \E_D$, which in turn implies that $\Sing(\E_D)\subseteq\E_D\smallsetminus\E_{D,1}=  \E_{D,2}$.\par
    If $L\in \E_{D,2}$, then $\pi:=L\cap D $ is a plane and, by passing to the corresponding  symplectic orthogonal subspaces, $\pi^\perp=L + D^\perp$, with $\dim\pi^\perp=4$: it follows that $\dim L \cap D^\perp =2$, i.e., $L$ has a plane in common both with $D$ and with $D^\perp$; but $L$ has dimension three, so these two planes cannot be disjoint: they must necessarily have a line in common, and such a line  must necessarily be $\ell_D$. In other words, $L$ contains $\ell_D$ and it is contained into $\ell_D^\perp$; it follows that   the image $\widetilde{L}$ of $L$ via the projection 
    \begin{equation*}
        \ell_D^\perp\longrightarrow\frac{\ell_D^\perp}{\ell_D}\, 
    \end{equation*}
    on the 4--dimensional symplectic space $\tfrac{\ell_D^\perp}{\ell_D}$ will be a 2--dimensional isotropic subspace. This means that  $\widetilde{L}\in\LG\left(2,\tfrac{\ell_D^\perp}{\ell_D}\right)$; denoting by $\widetilde{D}$ the projection of $D$, it is then obvious that the projection itself induces an isomorphism between $\E_{D,2}$ and the (two--dimensional) Schubert cycle  
\begin{equation*}
    \E_{\widetilde{D}}=\left\{\widetilde{L}\in\LG\left(2,\tfrac{\ell_D^\perp}{\ell_D}\right)\mid \dim(\widetilde{L}\cap \widetilde{D})\geq 1\right\}\, ,
\end{equation*}
which is smooth, since the dimension of $\widetilde{L}\cap \widetilde{D}$ is $\geq 1$ if and only if it is equal to one. We also observe that $q^{-1}(L)=\p (L\cap D)\subset \p L$ is a copy of $\p^1$, for all $L\in \E_{D,2} $.\par 
To prove the first part of claim (G) it then suffices to show that $\Sing(\E_D)\supseteq \E_{D,2}$; let us take $L\in \E_{D,2}$ and assume that there is a smooth neighborhood $\mathcal{U}\subset \E_D$ of $L$: then $q$ will be a surjective morphism between the  five--folds $q^{-1}(\E_{D,2})$ and $\E_{D,2}$ and, therefore, there are two possibilities: either it is an isomorphism, or the locus 
\begin{equation}\label{eqSingLoc}
  Z:= \{z\in q^{-1}(\mathcal{U})\mid \det(d_zq)=0 \}\subseteq q^{-1}(\mathcal{U})\subseteq \widetilde{\E}_D \ ,
\end{equation}
where the rank of $d q$   drops, is a hypersurface, that is, 4--dimensional. The first scenario is quickly discarded, since the fiber of $q$  over $L$ is a whole $\p^1$; in general,
\begin{equation}\label{eqPuntSingSchubCell}
    q^{-1}(\E_{D,2}) \subseteq Z\, 
\end{equation}
is a proper subset, because $\dim \E_{D,2}=2$ and $q$ has one--dimensional fibers over $\E_{D,2}$, hence $q^{-1}(\E_{D,2})$ has dimension three: at the same time, since $Z$ cannot contain the pre--images of points outside  $\E_{D,2}$, i.e., $Z\cap q^{-1}(\E_D\smallsetminus \E_{D,2})=\emptyset$, inclusion $Z\subseteq  q^{-1}(\E_{D,2})$ should hold as well---a contradiction.\par
It was then forbidden to assume that  $L\in \E_{D,2}$ admit a smooth neighborhood $\mathcal{U}\subset \E_D$: all points of $\E_{D,2}$ are singular, whence the sought--for inclusion $\Sing(\E_D)\supseteq \E_{D,2}$.
\par
In the last case (P) the element $D$, such that $\E_\eta=\E_D$, is Lagrangian, i.e., $D\in  X$: the dimension of $L\cap D$ can then attain the value 3 in the case when $L$ coincides with  $D$,  so that in the disjoint union  
    \begin{equation*}
        \E_{D}=\E_{D,1}\cup \E_{D,2}\cup \E_{D,3}\, ,
    \end{equation*}
 cf. \eqref{eqDefEDiI}, we have $\E_{D,3}=\{D\}$. 
As before, the restriction of $q$  defines an isomorphism  between   $q^{-1}(\E_{D,1})  $ and $\E_{D,1} $, so that     $\Sing(\E_D)\subseteq \E_{D,2}\cup \E_{D,3}$.\par 
%
Since $D$ is Lagrangian, there always exists a $D'\in X$, such that $\CC=D\oplus D'$ is a bi--Lagrangian decomposition; the latter determines the so--called \emph{big cell} $\mathcal{V}:=\{L\in X\mid L\cap D'=0\}$, that is an open and dense subset of $X$ isomorphic to $S^2 D^*$: a quadratic form $h\in S^2 D^*$ is identified with the Lagrangian subspace $\Graph(h):=\Span{x+h(x)\mid x\in D}\subset\CC$, having understood $h\in\Hom(D, D^*)$ as an element of $\Hom(D, D')$ by means of the natural identifications $D^*\simeq\tfrac{\CC}{D}\simeq D'$, see \cite[Section 1.2]{Gutt2019}. Obviously, $\E_{D,3}\subset\mathcal{V}$, because $\E_{D,3}=\{D\}$ and $D=\Graph(0)\in\mathcal{V}$.\par 
A key remark is that $L\cap D=\ker(h)$ in the case when  $L=\Graph(h)$: it follows  that  $\E_D\cap\mathcal{V}=\{h\in S^2D^*\mid\det(h)=0  \}$; therefore,  the well--known formula  
\begin{equation*}
    d_h(\det)=h^\sharp\cdot dh\
\end{equation*}
for the differential of the determinant $\det:S^2D^*\to\C$ at the point $h\in S^2 D^*$ implies  that
\begin{equation*}
    \Sing(\E_D\cap\mathcal{V})=\Sing(\E_D)\cap\mathcal{V}=\{h\in S^2D^*\mid\rk(h)=0,1  \} \, .
\end{equation*}
On the other hand, if $L\in \E_{D,2}\cap\mathcal{V}$, then $L=\Graph(h)$ and $L\cap D=\ker(h)$ is two--dimensional, i.e., $\rk(h)=1$, and vice--versa: in other words, 
\begin{equation*}
    \E_{D,2}\cap\mathcal{V}=\{h\in S^2D^*\mid\rk(h)=1  \} \, ,
\end{equation*}
which leads to
\begin{equation}\label{eqVeronSing}
     \Sing(\E_D)\cap\mathcal{V}=(\E_{D,2}\cap\mathcal{V})\cup \E_{D,3} \, .
\end{equation}
To prove the first part of claim (P), we observe that the composition of the inclusion $\mathcal{V}\subset X$ with the Pl\"ucker embedding $X\longrightarrow\p(\Lambda_0^3\CC)$ is the map
\begin{eqnarray}
    S^2D^*\simeq\mathcal{V}&\stackrel{\iota}{\longrightarrow}&\p(\Lambda_0^3\CC)=\p(\C^*\oplus S^2D^*\oplus S^2D\oplus\C)\, ,\nonumber\\
    h&\longrightarrow& [1:h:h^\sharp:\det h]\, ,\label{eqPluckDeomposed}
\end{eqnarray}
where we decomposed $\Lambda_0^3\CC$ into irreducible $\SL(D)$--modules, see \cite[Section 1.5]{Gutt2019} (the reader will recognize in \eqref{eqPluckDeomposed} the same expression that appeared in  \eqref{eq.MAE}, after replacing $h$ with $u$).\par
Therefore, we can regard the image of   $\mathcal{V}\cap(\E_{D,2}\cup \E_{D,3})$ via  \eqref{eqPluckDeomposed} as a subset of the projective subspace $\p(\C^*\oplus S^2D^*)$:
\begin{equation}\label{eqImageIota}
    \iota(\mathcal{V}\cap(\E_{D,2}\cup \E_{D,3}))=\{ [1:h]\mid \rk(h)=0,1\}\subset \p(\C^*\oplus S^2D^*)\, .
\end{equation}
In particular, $\iota(D)=[1:0]=\p(\C^*)$ is the unique point in the image of  $ \E_{D,3}$: the \emph{projective} cone with vertex $[1:0]$ over the  
%
%
Veronese surface 
\begin{equation*}
        v_2:\p D^*\longrightarrow \p(S^2D^*)\, 
    \end{equation*}
in $\p(S^2D^*)$, that is
\begin{equation}\label{eqConeVero}
    \bigcup_{x\in v_2(\p D^*)}\p^1([1:0],x)=\underset{\textrm{three--dimensional}}{\underbrace{\big\{[1:h]\mid h\in\widehat{v_2(\p D^*)} \big\}}}\cup \underset{\textrm{two--dimensional}}{\underbrace{v_2(\p D^*)}}\, ,
\end{equation}
contains then  \eqref{eqImageIota} as the (open and dense) three--dimensional subset, where by $\widehat{v_2(\p D^*)}\subset S^2D^*$ we meant the \emph{affine} cone.\par
Since $\overline{\mathcal{V}}=X$ and $\E_{D,2}\cup \E_{D,3}$ is closed, by passing to the closures, we see that $\iota(\E_{D,2}\cup \E_{D,3})$ coincides with the whole of \eqref{eqConeVero}; similarly, from \eqref{eqVeronSing} 
we obtain $\Sing(\E_D)\supseteq \E_{D,2}\cup \E_{D,3}$ and then the equality. But $\iota(\E_{D,2}\cup \E_{D,3})$ is the projective cone over the Veronese surface and $\iota$ is an embedding, therefore we are done.\par

    %
    In order to finish the proof, it suffices to compute the rank of $\widetilde{\eta}$ for each of the four 3--forms $\eta$ listed in Section \ref{subListNormalForms}; to this end, we observe that the linear subspace \eqref{defTangSpaceEL} can be recast as $\ker dF$, where $F$ is any function, such that, locally, $\E_\eta=\{F=0\}$: a choice of such a function can be seen in the last column of the aforementioned table. Therefore,  $\widetilde{\eta}$ and $dF$ are proportional and, as such, regarded as quadratic forms, they have the same rank: it then remains to use formula \eqref{eq:symbol.intro} to compute the symbol of $\E_\eta$ at a smooth  point of the open and dense subset where equality $\E_\eta=\{F=0\}$ holds. In the cases (L), (G) and (P) one immediately obtains, retaining the same notation as in \eqref{eq:symbol.intro},   $\eta_1^2+\eta_2^2+\eta_3^2$, $\eta_1\eta_2$ and $\eta_1^2$, respectively, whose rank is manifestly 3, 2 and 1, respectively. The last case (O) can be worked out in the same fashion.
\end{proof}

\subsection{The cocharacteristic variety via the momentum map}\label{subsecfinale}
We go back to the idea  of the cocharacteristic variety of a  Monge--Amp\`ere equation that we have introduced earlier in Section \ref{secSubSecCoCar}, and we prove the next   original result of the paper.\par
The construction that has led us to the section \eqref{eqLambdaTwoEta} is manifestly $\Sp(\CC)$--invariant, so that, if we compute $\Lambda^2(\widetilde{\eta})$ for any of the four representatives $\eta$ listed in Section \ref{subListNormalForms} above, we will have obtained all four possible isomorphism classes of such sections. The very same approach has been used in  Theorem \ref{thFirstOriginalResult}, when we   established that the KLR invariant $q(\eta)$ is, up to a projective factor, the same as the Hitchin moment map $\varpi(\eta)$, both computed on the 3--form $\eta$: we finish up this paper by establishing another (projective) equality between  $q(\eta)$ (or $\varpi(\eta)$), and $\Lambda^2(\widetilde{\eta})$, showing at the same time that all the three of them cut out the cocharacteristic variety of $\E_\eta$.\par
To make such last claim rigorous, it must be underlined that there is a $\Sp(\CC)$--equivariant linear embedding
\begin{equation}\label{eqSectionMap}
    S^2(\CC^*)\stackrel{s}{\longrightarrow} \Gamma((\E_\eta)_\textrm{sm},S^2(\mathcal{L}^*))
\end{equation}
sending a quadratic form $q$ to the section $s(q)$ defined in the natural way:
\begin{equation*}
    s(q)(L):=i_L^*(q)\,, \quad \forall L\in(\E_\eta)_{\textrm{sm}}\, ,
\end{equation*}
where $i_L:L\subset\CC$ denotes the  embedding. We have then the following  diagram of $\Sp(\CC)$--equivariant   maps:
\begin{equation}\label{eqCommDiagFinal}
    \xymatrix{
    & **[r]\p(\Lambda_0^3(\CC))\ar@{-->}[dl]_{\varpi=[ q]}\ar@{-->}[d]^{[\Lambda^2{(\widetilde{\,\cdot\,})}]}\\
    \p(S^2(\CC^*))\ar[r]^s & **[r] \p\big(\Gamma((\E_\eta)_\textrm{sm},S^2(\mathcal{L}^*))\big)\, .
    }
\end{equation}
The commutativity of \eqref{eqCommDiagFinal} can be checked manually by a case--by--case technique. The departing point is always an unknown quadratic form on $\CC$:
\begin{equation*}
    q=r^{ij}e_{i+3}e_{j+3}+s^{ij}e_{i+3}e_{j}+t^{ij}e_ie_j\, ,
\end{equation*}
that is, a  $6\times 6$ symmetric matrix presented in a $3\times 3$ block form, cf. \eqref{eqBasiViViDuale}.\par
Since the other cases are formally analogous, we will be focusing only on the  orbit ``O'', i.e., on the equation $\det\|u_{ij}\|=1$ that corresponds 
to   $\eta=e_{123}+e_{456}$, cf. \eqref{eqEqLimit1}. As a first step, we choose a local parametrization of $\E_\eta$, for example, by calculating  $u_{11}$ as a function of the remaining parameters: 
\begin{equation*}
    u_{11}=f(u_{12},u_{13},u_{22},u_{23},u_{33})=\frac{ u_{12}^2  u_{33}-2  u_{12}  u_{13}
    u_{23}+ u_{13}^2  u_{22}+1}{ u_{22}
    u_{33}- u_{23}^2}\, .
\end{equation*}
Next,  for any point $L\in\E_\eta$ lying in this local parametrization, i.e., for any
\begin{equation}\label{eqPuntoParticolare}
    L=\Span{e_1+f(u_{12},u_{13},u_{22},u_{23},u_{33})e_4+u_{12}e_5+u_{13}e_6, e_2+\sum_{i=1}^3u_{2i}e_{i+3},e_3+\sum_{i=1}^3u_{3i}e_{i+3}}\, ,
\end{equation}
we compute the restriction $i_L^*(q)$, which is then a $3\times 3$ symmetric matrix depending upon  the five parameters $u_{12},u_{13},u_{22},u_{23},u_{33}$.\par
Such a matrix can be computed by hand (better if aided by a computer algebra software) though, due to its lengthy expression, we leave it aside and we pass to the computation of the symbol of the equation $\E_\eta$ at the same point $L$ given by \eqref{eqPuntoParticolare}: the result is again  a $3\times 3$ (symmetric) matrix, cf. \eqref{eq:symbol.intro}, depending on the same five parameters $u_{12},u_{13},u_{22},u_{23},u_{33}$; it is not hard to see that its adjoint matrix is given by
\begin{equation}\label{eqDualSymbOnShell}
  \left(
\begin{array}{ccc}
 \frac{ u_{12}^2  u_{33}-2  u_{12}  u_{13}
    u_{23}+ u_{13}^2  u_{22}+1}{ u_{22}
    u_{33}- u_{23}^2} &  u_{12} &  u_{13} \\
  u_{12} &  u_{22} &  u_{23} \\
  u_{13} &  u_{23} &  u_{33} \\
\end{array}
\right)
\end{equation}
The desired commuativity of \eqref{eqCommDiagFinal} (at least on the orbit ``O'') means precisely that the requirement that $i_L^*(q)$ and the matrix \eqref{eqDualSymbOnShell} above cut out the same quadric in $L$, \emph{for all the points} $L\in(\E_\eta)_{\textrm{sm}}$, singles out a unique $q$ (up to a projective class).\par
By imposing that $i_L^*(q)$ coincide with  \eqref{eqDualSymbOnShell} for all the values of the five parameters $u_{12},u_{13},u_{22},u_{23},u_{33}$, one obtains a system of 83 linear equations in the 21 entries of $q$, whose left--hand sides are listed below:
\begin{align*}
 &-t^{11}s^{11}-1,-r^{11},1-s^{11},2
   r^{11},t^{13},-s^{13},-t^{13},2
   s^{13},-t^{33},-2 t^{11},2
   t^{33},t^{12},-s^{12},\\
   &-t^{12},2 s^{12},4
   t^{11},2-2 s^{11},-t^{23},2 s^{11}-2,2 t^{23},-2
   t^{13},2 t^{13},-t^{22},2 t^{22},-2 t^{12},2
   t^{12},-4
   t^{11},\\
   &\frac{s^{21}}{2},\frac{t^{13}}{2},-\frac{r^{12}}{
   2},-\frac{s^{13}}{2},-\frac{s^{21}}{2},-\frac{t^{13}}{2},\frac{t^{12}}{2},r^{12},s^{13},-\frac{s^{12}}{2},-\frac{t^{12}}{2},s^{12},-\frac{s^{23}}{2},s^{23},-\frac{t^{23}}{2},t^{23},t^{11},-\frac{s^{11}}{2}-\frac{ {s^{22}}}{2}+1
   ,\\
   &s^{11}+ {s^{22}}-2,-s^{21},-\frac{3
   t^{13}}{2},s^{21},\frac{3 t^{12}}{2},2
   t^{11},\frac{s^{31}}{2},-\frac{r^{13}}{2},-\frac{s^{31}}
   {2},r^{13},-\frac{s^{11}}{2}-\frac{s^{33}}{2}+1,s^{11}+{s^{33}}-2,\\
   &\frac{3
   t^{13}}{2},-\frac{s^{32}}{2},s^{32},-s^{31},-\frac{3
   t^{12}}{2},s^{31},-r^{22},-s^{23},2 r^{22},2
   s^{23},1-s^{22},2 s^{22}-2,2
   s^{21},-\frac{r^{23}}{2},\\
   &\frac{1}{2}
   (-s^{22}-s^{33}+2),r^{23},s^{22}+s^{33}-2,\frac{{t^{23}}}{2},-r^{33},1-s^{33},-s^{32},2 r^{33},2
   s^{33}-2,2 s^{32},2 s^{31}\, .
\end{align*}
Miraculously, the system is compatible and its  solution is $s^{11}=s^{22}=s^{33}=1$, the other entries being zero, that is, $q$ is proportional to a quadratic form of type $q^{(3)}$, which is precisely the image via the   KLR--Hitchin map of the form $\eta$, see Section \ref{subListNormalForms} above.\par 
The computations for the representatives of the orbits ``L'' and ``G'' are analogous and we omit them; the orbit ``P'' is the singular locus of the map $\varpi$ (Proposition \ref{propSingLoc}) and, since $\widetilde{\eta}$ and rank one (Theorem \ref{thMain2}), its cofactor matrix $\Lambda^2(\widetilde{\eta})$ must  be zero: hence, the orbit ``P'' is the singular locus of the map $[\Lambda^2(\widetilde{\,\cdot\, })]$ as well, and the commutativity of \eqref{eqCommDiagFinal} has been proved.  
On a deeper level, this  surprising compatibility  is a  cohomological feature of homogeneous bundles over $X$: in the Appendix we prove that the map $s$ of diagram \eqref{eqCommDiagFinal} is actually an isomorphism, see Section \ref{secAppendix1} below.
\begin{corollary}\label{corEquivalencyMomentCoChar}
For any $[\eta]\in\p(\Lambda^3_0(\CC^*))$ the  cocharacteristic variety $\sigma^\vee\subset\p(\CC)$ of the Monge--Amp\`ere equation $\E_\eta$ is cut out by  (the projective class of) the quadratic form $\varpi([\eta])\in \p (S^2(\CC^*))$. In particular:
\begin{itemize}
    \item[(O)] $\sigma^\vee$ is an irreducible, nonsingular and non--degenerate (rank--six) quadric;
    \item[(L)] $\sigma^\vee$ is an irreducible, nonsingular and degenerate rank--three quadric;
    \item[(G)] $\sigma^\vee$ is a reducible, nonsingular and degenerate rank--one quadric: it coincides with the hyperplane $\p(\ell_D^\perp)$, where $D\in\Gr(3,\CC)$ is such that $\E_D=\E_\eta$; 
    \item[(P)] $\sigma^\vee$ is trivial, i.e., it has rank zero and  $\sigma^\vee=\p(\CC)$.
\end{itemize}
\end{corollary}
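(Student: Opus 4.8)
The statement is a global repackaging of the commutativity of diagram \eqref{eqCommDiagFinal}, verified in Section \ref{subsecfinale}, together with the orbit computation of Section \ref{subListNormalForms}. Set $Q:=\{\varpi([\eta])=0\}\subset\p(\CC)$, the quadric cut out by the moment map. The commutativity of \eqref{eqCommDiagFinal} reads $s(\varpi([\eta]))=[\Lambda^2(\widetilde{\eta})]$ as projective classes of sections of $S^2(\sL^*)$ over $(\E_\eta)_{\textrm{sm}}$. Evaluating at a smooth point $L$ and recalling that $s(q)(L)=i_L^*(q)$, the left-hand side cuts out $\p(L)\cap Q$ inside $\p(L)$, while by \eqref{eqCoCarPointL} the right-hand side cuts out exactly $\sigma_L^\vee$. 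Hence the first step is the pointwise identity
\[
\sigma_L^\vee=\p(L)\cap Q\qquad\text{for every }L\in(\E_\eta)_{\textrm{sm}}.
\]

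From here I would extract the two inclusions. The inclusion $\sigma^\vee\subseteq Q$ is immediate: each $\sigma_L^\vee=\p(L)\cap Q$ lies in the closed set $Q$, hence so does the closure of their union, which is $\sigma^\vee$ by \eqref{defCocVar}. For the reverse inclusion $Q\subseteq\sigma^\vee$ I would argue that through a general point $[v]\in Q$ there passes a smooth point $L\in(\E_\eta)_{\textrm{sm}}$ with $v\in L$. Indeed, the Lagrangian subspaces containing $v$ form a copy of $\LG(2,\tfrac{v^\perp}{\langle v\rangle})\cong\LG(2,4)$, a $3$-dimensional family, and imposing the single hyperplane-section condition $L\in\E_\eta$ cuts this down to a generically nonempty $2$-dimensional subfamily lying in $(\E_\eta)_{\textrm{sm}}$. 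For such an $L$ one has $[v]\in\p(L)\cap Q=\sigma_L^\vee\subseteq\sigma^\vee$; as $[v]$ ranges over a dense subset of $Q$ and $\sigma^\vee$ is closed, this yields $Q\subseteq\sigma^\vee$, whence the equality $\sigma^\vee=Q$ asserted in the corollary.

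It then remains to read off the four itemised descriptions. By $\Sp(\CC)$-equivariance it suffices to evaluate $\varpi$ on the four orbit representatives, which was already done in Section \ref{subListNormalForms}: the images are $q^{(3)}$, $q_{[2^3]}$, $q_{[2,1^4]}$ and the trivial form, of ranks $6$, $3$, $1$ and $0$ respectively. The rank of $\varpi([\eta])$, regarded as a quadratic form on $\CC\cong\CC^*$, determines the projective type of the quadric $Q\subset\p(\CC)=\p^5$ and so yields (O)--(P). In the parabolic case (P) the map $\varpi$ is undefined, its indeterminacy locus being $X^*$ (Proposition \ref{propSingLoc}), consistently with $\rk\widetilde{\eta}=1$ on $(\E_\eta)_{\textrm{sm}}$ from Theorem \ref{thMain2}, so that $\Lambda^2(\widetilde{\eta})\equiv 0$; the fibrewise zero loci then fill $\bigcup_L\p(L)$, whose closure is all of $\p(\CC)$, recovering $\sigma^\vee=\p(\CC)$. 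The identification of the rank-one quadric in (G) with the hyperplane $\p(\ell_D^\perp)$ follows by matching the representative $e_1^2$ with the subspace $D$ produced in the proof of Theorem \ref{thMain2}(G), where $\ell_D=D\cap D^\perp$ and $\ell_D^\perp=D+D^\perp$.

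The step I expect to be the main obstacle is the reverse inclusion $Q\subseteq\sigma^\vee$, and in particular making the ``general point'' argument robust across the degenerate orbits. In cases (L), (G) and (P) the quadric $Q$ acquires a positive-dimensional singular locus (or, in (P), ceases to be a proper quadric), and one must ensure both that the $2$-dimensional family of Lagrangians through $[v]$ genuinely meets $(\E_\eta)_{\textrm{sm}}$ for a dense set of $[v]\in Q$, and that the resulting inclusion survives passage to the closure defining $\sigma^\vee$; verifying this uniformly, rather than orbit by orbit, is the delicate point.
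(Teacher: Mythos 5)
Your proposal is correct and follows essentially the same route as the paper's proof: the commutativity of diagram \eqref{eqCommDiagFinal} identifies $\Lambda^2(\widetilde\eta)$ with $s(\varpi([\eta]))$ up to a projective factor, and the four itemised cases are then read off from the orbit-representative computations of Section \ref{subListNormalForms} (with the explicit $D=\Span{e_1,e_3,\epsilon^3}$ for case (G) appearing in the corollary's own proof rather than in that of Theorem \ref{thMain2}). The one place you go beyond the paper is the reverse inclusion $Q\subseteq\sigma^\vee$ via the incidence family $\LG\bigl(2,\tfrac{v^\perp}{\langle v\rangle}\bigr)$ of Lagrangians through a general $[v]\in Q$ — a step the paper's proof asserts without detail when it declares $q(\eta)$ to be ``precisely the quadratic form cutting out $\sigma^\vee$'' — and your flagging of the need to check that this two-dimensional subfamily is not swallowed by $\Sing(\E_\eta)$ in the degenerate orbits is a legitimate refinement, not a defect.
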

\begin{proof}
    Commutativity   of diagram \eqref{eqCommDiagFinal} tell us that, up to a projective factor,  $\Lambda^2(\widetilde{\eta})$ is the section $s(q(\eta))$, for any of the four representatives $\eta$ and, in virtue of its $\Sp(\CC)$--equivariancy, for all the elements of $\Lambda^3_0(\CC^*)$.\par
    The cocharacterisitic variety $\sigma^\vee$, that has been  defined above by \eqref{defCocVar}, can be equivalently given as   the (closure of the) image of the natural projection  
    \begin{equation*}
        \xymatrix{
        **[l](\E_\eta)_\textrm{sm}\times\CC\supseteq \mathcal{L}|_{(\E_\eta)_{\textrm{sm}}}\ar[d]\ar[r]&\CC\\
        **[l](\E_\eta)_\textrm{sm}\, 
        }
    \end{equation*}
    over $\CC$ 
    of the (conic) sub--bundle of $\mathcal{L}|_{(\E_\eta)_{\textrm{sm}}}$ cut out by the section $s(q(\eta))$, that is the same as the section $\Lambda^2(\widetilde{\eta})$: indeed, at each nonsingular  point $L\in\E_\eta$, the conic subset of $L$ cut out by the equation $\Lambda^2(\widetilde{\eta})=0$ is the cone over the projective dual of the characteristic variety $\sigma_L$ of $\E_\eta$ at $L$, see \eqref{eqCoCarPointL}.\par
    Since the section $s(q(\eta))$ comes from the quadratic form $q(\eta)$, the latter is precisely the quadratic form cutting out $\sigma^\vee$ in $\CC$: this proves the first part of the claim.\par
    The second part can be carried out by working out all the four cases listed in Section \ref{subListNormalForms} above: for the orbit ``P'' the claim is evident; in the cases ``O'' the quadric has manifestly rank six: as such, it is nonsingular and non--degenerate and, therefore, it is irreducible; in the case ``L'' we see that $\sigma^\vee$ is the pre--image of a rank--three (and, as such, non--degenerate, nonsingular and irreducible) quadric on the first summand $V$ of $\CC=V\oplus V^*$.\par
    In the last case ``G'' reducibility it obvious, since $\{e_1^2=0\}$ is a linear hyperplane; it remains to prove that such a hyperplane is the symplectic orthogonal to the line $\ell_D$, where $D$ is one of the only two elements of $\Gr(3,\CC)$ that determine the same equation $\E_\eta$, see \eqref{eqMappaMappinaGr3LGr}. Direct computations, analogous to those carried out over $\R$ in Section~\ref{sec:cone.structure.u11} above, show that these elements are precisely
    \begin{equation*}
        D=\Span{e_1,e_3,\epsilon^3}\, ,\quad D^\perp=\Span{e_1,e_2,\epsilon^2}\, .
    \end{equation*}
    One readily sees that $\E_\eta=\E_D=\E_{D^\perp}=\{u_{23}=0 \}$. 
    Recalling the definition of $\ell_D$, cf.  \eqref{edDefEllDiPerp}, we see that $\ell_D=\Span{e_1}$, whence 
    \begin{equation*}
        \ell_D^\perp=\Span{e_1,e_2,e_3,\epsilon^2,\epsilon^3}=\ker e_1=\{e_1^2=0\}=\sigma^\vee\, ,
    \end{equation*}
    and the proof is complete.
\end{proof}

Observe that the Theorem \ref{thMain1} we have formulated before  is  the real--differentiable counterpart of the Corollary \ref{corEquivalencyMomentCoChar} we have just proved in the complex--analytic setting: claims (1), (2) and (3) of the former correspond to claims  (O+L), (G) and (P) of the latter. As the (omitted) proof of Theorem \ref{thMain1} is formally analogous to that of Corollary \ref{corEquivalencyMomentCoChar}, we can draw the following conclusion: for a Monge--Amp\`ere equation $\E_\eta$ the KLR invariant and the Hitchin moment map of the 3--form $\eta$, equated to zero, gives the same quadric in $\CC$, that we have called \emph{cocharacteristic variety}, since it consists of the projective duals of the fibers of the well--known characteristic variety; in the case when $\E_\eta$ has \emph{non--denegerate symbol}, the cocharacteristic variety   coincides also with the \emph{contact cone structure} associated with the PDE itself, which can be thought of as a non--linear generalization of Goursat's idea of describing a Monge--Amp\`ere equation via a sub--distribution of the contact distribution on $J^1$; in case of a degenerated symbol, the cocharacteristic variety is the smallest liner subspace containing both $D$ and $D^\perp$, when $\E_\eta=\E_D$, or it even becomes trivial, when $D=D^\perp$.

\section{Conclusions and perspectives}

We have seen that to any Monge--Amp\`ere equation in three independent variables, that we understood as a hypersurface of the $2\Nd$ order  jet space $J^2$,   is associated, at a fixed point of $J^1$, a quadric cone inside the contact space of $J^1$; the latter turns out to be a $6$--dimensional symplectic vector space. We showed that, in the case when the symbol of the Monge--Amp\`ere equation is non--degenerate, the aforementioned   cone coincides with the cocharacteristic variety of the considered equation. This motivated the study of quadratic forms on a $6$--dimensional symplectic vector space, up to symplectic equivalence. \par

It turned out that quadric cones that are contact cone structures of Monge--Amp\`ere equations fill up a narrow subclass. Thus, it is natural to ask where the remaining   cones come from, that is, whether there exists another class of PDEs, necessarily more general than those of Monge--Amp\`ere type, that accounts for the remaining  normal forms of the table of Section \ref{subsCompClass}; in practise, this means solving the following problem: given \emph{any} quadratic form from the aforementioned table, one can consider it zero locus and wonder whether the latter can be realized as the contact cone structure  of some PDE and, moreover, to what extent such a PDE is characterized by its contact cone structure. Among the candidates for a larger class of PDEs there are the equations of Monge--Amp\`ere type of ``higher algebraic degree", that is, PDEs given by an analogous formula to \eqref{eq.MAE}, where, instead of a linear polynomial of the minors of the Hessian matrix, we find a polynomial of higher degree (see also \cite[Section 5]{MR3603758} for more details about this class of PDEs).
For example, in Section \ref{sec:ritorno.u11.minore} we have shown how to recover the equation \eqref{eq.u11.minore} starting from the quadric cone \eqref{eq:cone.structure.u11.minore}: performing similar computations, starting from the cone variety corresponding to the normal form $q^{(2)}$ of the aforementioned table, that is
\begin{equation}\label{eqFormaMozzata}
    z^2q_2+ z^3q_3=0\,,
\end{equation}
we obtain the equation
\begin{equation}\label{eqEquazioneFormaMozzata}
    u_{12}(u_{13}u_{23}-u_{12} u_{33}) + u_{13}(u_{12}u_{23}-u_{13}u_{22})=0
\end{equation}
i.e., the variety \eqref{eqFormaMozzata} is the contact cone structure  of the PDE \eqref{eqEquazioneFormaMozzata}.
As it happens, though,  PDE \eqref{eqEquazioneFormaMozzata} is no longer a Monge--Amp\`ere equation (i.e., at any point of $J^1$, it is not a hyperplane section of the Lagrangian Grassmanian $\LL(3,\CC)$), but a \emph{quadratic} Monge--Amp\`ere equation instead, that is a PDE of Monge--Amp\`ere type of algebraic degree equal to 2 (or, in other words, at a point of $J^1$, it is a \emph{hyperquadric} section of the Lagrangian Grassmanian $\LL(3,6)$), being the left hand side of \eqref{eqEquazioneFormaMozzata} a polynomial of second degree in the minors of the Hessian matrix $\|u_{ij}\|$.
%
It is then natural to ask, whether there exists a minimal algebraic degree $d$, such that the class of  PDEs of Monge--Amp\`ere type of  algebraic degree equal to $d$ contains \emph{all} PDEs whose conic structure is given by one of the quadratic forms listed in the table of Section \ref{subsCompClass}.\par
Another important class of $2\Nd$ order PDEs is the class of so--called \emph{hydrodynamically integrable} ($2\Nd$ order) PDEs, that have been instensively studied, from the point of view of hypersurfaces in a Lagrangian Grassmannian, by  E.~Ferapontov and his collaborators \cite{MR2587572}: in particular, they have discovered  that the 21--dimensional group $\Sp(6,\R)$ has an open orbit in the space that parametrises  these PDEs, which in turns implies that such a space has dimension 21. In spite of the fact that the space of integrable PDEs and the space of quadrics on $\CC$ have the same dimension, the former cannot be the larger class of PDEs we are looking for, since it does not contain the class of Monge--Amp\`ere equations: actually they intersect along the orbit made of the linearizable Monge--Amp\`ere equations, that is, the orbit that we have labeled by ``L'' in Section \ref{secLinSec} above.  \par
%
%
One may then also wonder, whether it is possible to characterise the class of integrable $2\Nd$ order PDEs in terms of their contact cone structures.

\section{Appendix: cohomology of homogeneous bundles over $\LL(3,\CC)$}\label{secAppendix1}
 
 To prove the following result we employ the Borel--Weil--Bott theorem, as formulated in \cite{10.1215/S0012-7094-06-13233-7}, and we will follow the notation therein. 
 \begin{proposition}\label{propFinale}
     The natural sheaf morphism
\begin{equation}
    S^2(\OO\otimes\CC^*)|_{\E_\eta}\stackrel{p}{\longrightarrow} S^2(\sL^*)|_{\E_\eta}
\end{equation}
induces an isomorphism between the corresponding spaces of sections, and they can be futher identified with the irreducible $\Sp(\CC)$--representation $W_{(2,0,0)}$, that is, $S^2(\CC)$.
 \end{proposition}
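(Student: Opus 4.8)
The plan is to deduce the statement from the Borel--Weil--Bott theorem on $X=\LG(3,\CC)=\Sp(\CC)/P$, with $P$ the maximal parabolic stabilising a Lagrangian subspace, after reducing the computation from $\E_\eta$ to the whole of $X$. First I would dispose of the left-hand side: since $S^2(\OO\otimes\CC^*)=\OO_X\otimes S^2(\CC^*)$ is a trivial homogeneous bundle, its space of sections over $\E_\eta$ is $H^0(\E_\eta,\OO_{\E_\eta})\otimes S^2(\CC^*)$. I would show $H^0(\E_\eta,\OO_{\E_\eta})=\C$ from the ideal--sheaf sequence $0\to\OO_X(-1)\to\OO_X\to\OO_{\E_\eta}\to 0$ (valid because $\E_\eta$ is an effective Cartier divisor, namely the hyperplane section cut by $[\eta]$) together with the vanishing $H^i(X,\OO_X(-1))=0$, which is the first of several Borel--Weil--Bott inputs. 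Recalling that $\CC\simeq\CC^*$ through $\omega$ and that $S^2(\CC)\simeq\sp(\CC)=W_{(2,0,0)}$, the left-hand space is then canonically $W_{(2,0,0)}$.

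For the right-hand side I would use that $\OO_X(1)=\det\sL^*=\Lambda^3\sL^*$ (the Pl\"ucker bundle, whose fibre at $L$ is $\det L^*$), tensor the ideal--sheaf sequence by the locally free sheaf $S^2(\sL^*)$, and read off the cohomology of the two twists from Borel--Weil--Bott. With the Cartan subalgebra \eqref{eqCartSub} one has $\rho=3h_1+2h_2+h_3$ (I write a triple $(a,b,c)$ for $ah_1+bh_2+ch_3$), and with the sign convention calibrated so that $\OO_X(1)$, of Levi highest weight $h_1+h_2+h_3$, satisfies $H^0(X,\OO_X(1))=\Lambda^3_0(\CC^*)$, the bundle $S^2(\sL^*)$ carries Levi highest weight $2h_1$; then $\mu+\rho=(5,2,1)$ is regular dominant, so $H^0(X,S^2(\sL^*))=W_{(2,0,0)}$ with all higher cohomology vanishing. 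For the twist $S^2(\sL^*)\otimes\OO_X(-1)$ the Levi highest weight is $h_1-h_2-h_3$, whence $\mu+\rho=(4,1,0)$ lies on the wall $h_3=0$ and is singular, so the whole bundle is acyclic. Feeding $H^0(X,S^2(\sL^*)\otimes\OO_X(-1))=H^1(X,S^2(\sL^*)\otimes\OO_X(-1))=0$ into the long exact sequence shows that restriction $H^0(X,S^2(\sL^*))\xrightarrow{\sim}H^0(\E_\eta,S^2(\sL^*)|_{\E_\eta})$ is an isomorphism, so the right-hand space is also $W_{(2,0,0)}$.

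It then remains to identify the induced map $p$ on sections with the embedding $s$ of \eqref{eqSectionMap}. The morphism $p=S^2(\mathrm{pr})$ is induced fibrewise by the surjection $\CC^*\to\sL^*$ dual to $\sL\hookrightarrow\CC$, so at each smooth $L$ it restricts quadratic forms, $p_L(q)=i_L^*q$; hence $p$ carries the constant section $q\in S^2(\CC^*)$ to $L\mapsto i_L^*q$, which is exactly $s(q)$. Both source and target of the map on sections being the irreducible $\Sp(\CC)$--module $W_{(2,0,0)}$, Schur's lemma reduces the claim to nonvanishing; and $s$ is nonzero (indeed injective, as already observed below \eqref{eqSectionMap}) because a nonzero $q$ is nonzero on some Lagrangian $L\in\E_\eta$, the Lagrangians lying on $\E_\eta$ sweeping out a dense subset of $\CC$. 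Thus the map is an isomorphism and the two spaces are identified with $S^2(\CC)=W_{(2,0,0)}$.

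The main obstacle I anticipate is the bookkeeping of Borel--Weil--Bott weights in the correct convention: the sign with which the fibre of $\sL^*$ is read (its weights being $h_1,h_2,h_3$ rather than $-h_1,-h_2,-h_3$) must be fixed once and for all by calibrating against a bundle whose cohomology is independently known, such as $\OO_X(1)$, and then applied consistently to $S^2(\sL^*)$ and to its negative twist; a mismatched sign would wrongly place the first weight on a wall and the second off it. A secondary point requiring care is the vanishing $H^0(\E_\eta,\OO_{\E_\eta})=\C$ in the degenerate cases (G) and (P), where $\E_\eta$ is singular; this remains valid, since the ideal--sheaf sequence only needs $\E_\eta$ to be an effective Cartier divisor and the vanishing $H^1(X,\OO_X(-1))=0$ is a statement on the smooth $X$, independent of the singularities of $\E_\eta$.
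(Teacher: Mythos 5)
Your argument is correct, and while it runs on the same Borel--Weil--Bott fuel as the paper's proof, the decisive step is genuinely different. The paper assembles the hyperplane sequence and the dual tautological sequence into a $3\times 3$ diagram and shows that the kernel sheaf $Q^*\cdot\CC^*$ of $p$, its twist by $\OO_X(-1)$, and its restriction to $\E_\eta$ are all cohomologically trivial; the vanishing of $H^0$ and $H^1$ of $Q^*\cdot\CC^*|_{\E_\eta}$ then makes $H^0(p)$ an isomorphism outright, with no further input. You bypass the kernel entirely: you compute the two spaces of sections separately --- the left one from the triviality of $S^2(\OO\otimes\CC^*)$ together with $H^0(\E_\eta,\OO_{\E_\eta})=\C$, the right one from the acyclicity of $S^2(\sL^*)\otimes\OO_X(-1)$ (shifted weight on the wall $h_3=0$) and $H^0(X,S^2(\sL^*))=W_{(2,0,0)}$ --- identify $H^0(p)$ with the map $s$ of \eqref{eqSectionMap}, and conclude by Schur's lemma once $s$ is seen to be nonzero, which your observation that every vector of $\CC$ lies on some Lagrangian belonging to $\E_\eta$ (a hyperplane section of the nonempty positive--dimensional variety $\LG(2,\ell^\perp/\ell)$) indeed guarantees. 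Your route is more economical, dispensing with the Bott computations for $Q^*$, $\Lambda^2(Q^*)$ and their twists and with the two auxiliary long exact sequences; the paper's route avoids the Schur/nonvanishing step and delivers the slightly stronger conclusion that \emph{all} cohomology groups of the four sheaves in the lower--right square agree. Your remark that the Cartier--divisor sequence and the vanishing on the smooth $X$ are insensitive to the singularities of $\E_\eta$ in cases (G) and (P) makes explicit a point the paper leaves implicit, and your weight bookkeeping is consistent with the paper's conventions.
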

Let us observe that, as a consequence of such a result, the arrow $s$ of diagram \eqref{eqCommDiagFinal} above must be an isomorphism: indeed, Proposition \ref{propFinale} was intended to provide a solid theoretical background to the computation performed throughout Section \ref{subsecfinale}.
 \begin{proof}[Proof of the Proposition \ref{propFinale}]
 We will need the hyperplane exact sequence
 \begin{equation}\label{eqSeqSheaf1}
     0\longrightarrow \OO_X(-1) \longrightarrow \OO_X \longrightarrow \OO_{\E_\eta}\longrightarrow 0\, ,
 \end{equation}
 together with the exact sequence
 \begin{equation}\label{eqSeqSheaf2}
     0\longrightarrow Q^*\cdot\CC^*\longrightarrow S^2(\OO\otimes\CC^*)\longrightarrow S^2(\sL^*)\longrightarrow 0\, ,
 \end{equation}
 coming from the dual of the tautological sequence
  \begin{equation}
     0\longrightarrow \sL 
     \longrightarrow  \OO\otimes\CC \longrightarrow Q\longrightarrow 0\,  .
 \end{equation}
 By combining sequence \eqref{eqSeqSheaf1} and \eqref{eqSeqSheaf2} we obtain a commutative diagram  of coherent sheaves on $X$ with exact rows and columns:
 \begin{equation}\label{eqBigDiag}
     \xymatrix{
      & 0\ar[d]&0\ar[d]&0\ar[d]& \\
      0\ar[r] & Q^*\cdot \CC^*(-1) \ar[r]\ar[d] & Q^*\cdot \CC^* \ar[r]\ar[d] & Q^*\cdot \CC^*|_{\E_\eta} \ar[r]\ar[d] & 0
    \\
    0\ar[r] & S^2(\OO\otimes \CC^*)(-1) \ar[r]\ar[d] & S^2(\OO\otimes \CC^* )\ar[r]\ar[d] & S^2(\OO\otimes \CC^*)|_{\E_\eta} \ar[r]\ar[d] & 0\\
    0\ar[r] & S^2(\sL^*)(-1) \ar[r]\ar[d] & S^2(\sL^*) \ar[r]\ar[d] & S^2(\sL^*)|_{\E_\eta} \ar[r]\ar[d] & 0\\
    & 0 &0 &0 & \\
     }
 \end{equation}
The symbol $Q^*\cdot \CC^*$ stands for the sheaf that fits into the exact sequence
\begin{equation}
    0\longrightarrow \Lambda^2(Q^*)\longrightarrow Q^*\otimes\CC^*\longrightarrow Q^*\cdot \CC^*\longrightarrow 0\, .
\end{equation}
 To each short exact sequence that appears in diagram \eqref{eqBigDiag} above we can associate the long exact sequence in cohomology: we will show that  the only nonzero cohomology groups are the zeroth groups of right lower $2\times 2$ square of the diagram. \par

 Since $\sL$, $Q$, $\OO_X(-1)$ and their various products are $\Sp(\CC)$--homogeneous,  Borel--Weil--Bott theorem allows us to compute their cohomologies by weight considerations.   We recall that the Lagrangian Grassmanian $X$ is a $\Sp(\CC)$--homogeneous space, whose parabolic subgroup $P$ corresponds to the marked Dynkin diagram
  \begin{equation*}
      \xymatrix{
      \overset{}{\bullet} \ar@{-}[r]&\overset{}{\bullet} &\overset{}{\circ}  \ar@{=}[l]|{\scalebox{1.6}{$<$}}
      }\, .
  \end{equation*}
 We recall that the fundamental Weyl chamber $D\subset\gh^*$ is the positive cone spanned by the fundamental weights, see Remark \ref{remFundWeights}, and we define $\delta$ as their sum, i.e.,
 \begin{equation*}
     \delta=3h_1+2h_2+h_3\, .
 \end{equation*}
 By the \emph{index} $\ind(\lambda)$ of the weight $\lambda$ we mean the smallest number of simple reflections needed to move  $\lambda$ to the fundamental Weyl chamber $D$; a representative of the orbit (with respect to the Weyl group action) of $\lambda$ belonging to $D$ will be denoted by the symbol $w(\lambda)$.\par
 Since irreducible $\Sp(\CC)$--homogeneous  bundles on $X$ correspond to irreducible representations of $P$, they are determined by their highest weights. To begin with, the tautological bundle $\sL$ has weights $-h_1, -h_2, -h_3$, the minus sign coming from the very definition of associated vector bundle, where we act on the fiber by   the inverse. In this case, the highest weight is $-h_3$.\par
 Since, in the case of $\LG(3,6)$, the bundle $\sL$ is identified with $Q^*$, the weights are the same. For $\sL^*$ the weights will be $h_1, h_2, h_3$, with $h_1$ being the highest; the line bundle $\OO_X(-1)$ has (highest) weight $-h_1-h_2-h_3$.   Therefore, we can obtain the highest weight of various products: for $S^2(\sL^*)$ we obtain $2h_1$, for $S^2\sL^*(-1)$ we obtain $h_1-h_2-h_3$, for $\Lambda^2(Q^*)$ we obtain $-h_2-h_3$ and for $\Lambda^2Q^*(-1)$ we obtain $-h_1-2h_2-2h_3$.\par
 By Borel--Weil--Bott theorem, if $w(\lambda+\delta)$ belongs to the boundary $\delta D$ of $D$, then the bundle associated to $\lambda$ is cohomologically trivial (or, as some authors say,   \emph{immaculate}), i.e., $H^i=0$ for all $i$. In our case, easy computations show that $\sL=Q^*$, $\Lambda^2(Q^*)$, $S^2\sL^*(-1)$, $S^2(\OO\otimes\CC^*)(-1)$, $Q^*(-1)$ and  $\Lambda^2Q^*(-1)$ are all immaculate; by using the cohomology long exact sequence, we obtain that $Q^*\cdot\CC^*(-1)$ and $Q^*\cdot\CC^*$ are immaculate as well. Then, by the same argument, the immaculateness $S^2(\OO\otimes\CC^*)(-1)$ and $Q^*\cdot\CC^*|_{\E_\eta}$ also follows.\par
 Therefore we have shown that only the bundles in the lower right $2\times2$ square can have nonzero cohomologies and by exactness of the cohomology sequences we have for all $i$: $ H^i (X,S^2(\sL^*) ) = H^i(X,S^2(\OO\otimes \CC^*)) = H^i(\E_\eta, S^2(\OO\otimes \CC^*)) = H^i(\E_\eta, S^2(\sL^*))$. To finish the proof it is now enough to observe that any of the nontrivial $H^0$'s is the $\Sp(\CC)$ irreducible representation $W_{(2,0,0)}$: but the claim is obvious for the trivial bundle in the center of diagram \eqref{eqBigDiag}, since the zeroth cohomology of a trivial bundle coincides with the fiber, that is, $S^2(\CC^*)$. This concludes the proof.
 \end{proof}



\bibliographystyle{abbrvnat}
\bibliography{BibUniver}
\end{document}